\begin{document}

\title[Genuine $C_n$-equivariant $\TMF$]{Genuine $C_n$-equivariant $\TMF$}
\author{Ying-Hsuan Lin}
\author{Akira Tominaga}
\author{Mayuko Yamashita}
\date{}
\address{Jefferson Physical Laboratory, Harvard University, Cambridge, MA 02138, USA}
\email{yhlin@alum.mit.edu}
\address{Department of Mathematics, Johns Hopkins University, Baltimore, MD 21218, USA}
\email{atomina1@jh.edu}
\address{Perimeter Institute for Theoretical Physics / RIKEN, 31 Caroline Street North,
Waterloo, Ontario, Canada, N2L 2Y5}
\email{myamashita@perimeterinstitute.ca}
\thanks{{\it Acknowledgments}: 
	The authors thank Lennart Meier, Tilman Bauer, and David Gepner for helpful discussions.
	The work of MY at Perimeter Institute is supported in part by the Government of Canada through the Department of Innovation, Science and Economic Development and by the Province of Ontario through the Ministry of Colleges and Universities. She is also supported by Grant-in-Aid for JSPS KAKENHI Grant Number 20K14307 and JST CREST program JPMJCR18T6, as well as the Simons Collaboration on Global Categorical Symmetries (Simons Foundation International grant SFI-MPS-GCS-00008528). YL was supported by the Simons Collaboration Grant on the Non-Perturbative Bootstrap for a portion of this work. This research was partly supported by a grant NSF PHY-1748958 to the Kavli Institute for Theoretical Physics.
	}

\begin{abstract}
	We determine the $\mathrm{TMF}$-module structures of the genuine $C_2$-equivariant $\mathrm{TMF}$ with $\mathrm{RO}(C_2)$-gradings and of the $C_3$-equivariant $\mathrm{TMF}$. Moreover, we propose a general strategy for studying $C_n$-equivariant $\mathrm{TMF}$ via $U(1)$-equivariant $\mathrm{TMF}$ and a duality phenomenon in equivariant $\mathrm{TMF}$.
\end{abstract}

\maketitle

\tableofcontents

\section{Introduction}\label{sec_intro}

Elliptic cohomology and its equivariant refinements have been of central interest in algebraic topology, representation theory, and mathematical physics. Numerous pioneering works have explored equivariant elliptic cohomology, far too many to list exhaustively. Nevertheless, we highlight the foundational work of Lurie \cite{LurieElliptic3} and Gepner-Meier \cite{GepnerMeier} on formulating the equivariant refinement of the spectrum of Topological Modular Forms ($\TMF$). 

In this paper, we study the equivariant $\TMF$ for cyclic groups. Denote by $C_n$ the cyclic group of order $n$. 
We propose a general strategy for analyzing the $C_n$-equivariant $\TMF$ by reducing to the $U(1)$-equivariant case, which exhibits more tractable structural behavior. This strategy allows us to determine the structures of the $C_n$-equivariant $\TMF$ without resorting to a full computation of the descent spectral sequence.

While various $p$-local studies of $C_n$-equivariant $\TMF$ were known (e.g., \cite{meier2022topologicalmodularformslevel}, \cite{chua}), a unified picture of the integral structure had remained unclear until recent works including this paper.
Following \cite{GepnerMeier}, the $C_n$-equivariant $\TMF$ is defined as
\begin{align}
	\TMF^{C_n} := \Gamma(\cE^\ori[n]; \cO_{\cE^\ori[n]}),
\end{align} 
where $\cE^\ori[n]$ denotes the $n$-torsion points (the kernel of the $n$-fold multiplication map) of $\cE^{\ori} \to \cM^{\ori}$, the universal oriented curve in the sense of spectral algebraic geometry \cite{LurieElliptic2}.
Meier \cite{meier2022topologicalmodularformslevel} established the additive decomposition of $\TMF^{C_n}$ after $p$-completion, where $p$ is a prime. In particular, when $p$ does not divide $n$, the $p$-localized $\TMF_{(p)}^{C_n}$ splits as a direct sum of shifts of $\TMF_1(3)$, $\TMF_1(2)$, and $\TMF$. Moreover, $\TMF_1(3)$ and $\TMF_1(2)$ can each be described as the smash product of $\TMF$ with a finite cell complex \cite{Mathew}. 

In addition, Chua \cite{chua} computed the descent spectral sequence of $2$-local $\TMF^{C_2}$ and showed that $\TMF^{C_2}$ can likewise be expressed as the smash product of $\TMF$ with a finite cell complex (see \cite{chua} and Corollary \ref{CorChua}).
However, Chua also noted the difficulty of computing the descent spectral sequence in the $3$-local $C_3$-equivariant case, owing to the complexity of the multiplication-by-$3$ formula for elliptic curves. In general, since $\TMF^{C_n} = \Gamma(\cE^{\ori}[n], \cO_{\cE^\ori [n]})$ is a $\TMF$-module of rank $n^2$, the descent spectral sequence is inherently complicated for larger $n$.

In contrast to the cyclic-group case, $U(1)$-equivariant $\TMF$ exhibits a much simpler structure. In \cite{GepnerMeier}, Gepner and Meier defined the genuine $U(1)$-fixed point spectrum as the global section of the structure sheaf of the universal oriented elliptic curve
\begin{align}
	\TMF^{U(1)} \coloneqq \Gamma(\cE^\ori; \cO_{\cE^\ori})  
\end{align}
and established the additive decomposition
\begin{align}
    \TMF^{U(1)} \simeq \TMF \oplus \Sigma \TMF.
\end{align}
More generally, $\RO(U(1))$-graded $\TMF$ has been studied by the second author \cite{Tominaga} and by Bauer-Meier \cite{BauerMeierTJF}; see also the ``user's guide'' in \cite[Appendix A]{lin2024topologicalellipticgenerai}. Named \textit{Topological Jacobi Forms}, it represents a spectral refinement of the ring of integral Jacobi Forms (see Section~\ref{subsec_TJF} for further explanation). 

We employ the $\RO(U(1))$-graded, $U(1)$-equivariant $\TMF$ to analyze $\TMF^{C_n}$.
Specifically, the group extension
\begin{align}
	C_n \hookrightarrow U(1) \xrightarrow{(-)^n} U(1)
\end{align}
induces the following fiber sequence of $\TMF$-module spectra:
\begin{align}\label{seq_intro}
	\xymatrix{
	\Gamma(\cE^\ori, p^*\omega) \ar[r] \ar@{=}[d] & \Gamma(\cE^\ori, \cO_{\cE^\ori}(n^2e)\otimes p^*\omega^{n^2}) \ar[r]^-{\res_{\cE[n]}}  \ar@{=}[d] & \Gamma(\cE^\ori[n], \cO_{\cE[n]}) \ar@{=}[d]  \\
	\Sigma^{-2}\TMF^{U(1)} \ar[r] & \Sigma^{-2} ( \TMF \otimes S^{\sigma^n})^{U(1)} \ar[r]^-{\res_{U(1)}^{C_n}}& \TMF^{C_n}
 }
\end{align}
Here, $\cO_\cE(n^2 e)$ denotes the sheaf of meromorphic functions with poles of order at most $n^2$ and located only on the zero section $e$ of the universal elliptic curve. We further generalize this fiber sequence to the $\RO(C_n)$-graded setting. These results are developed in Section \ref{sec_general} and provide efficient tools for the study of $C_n$-equivariant $\TMF$. 

In Sections \ref{sec_app1} and \ref{sec_C3}, we apply this general strategy to the cases $n=2$ and $n=3$. For $n=2$, the fiber sequence \eqref{seq_intro} is computable, allowing us to recover the results of \cite{meier2022topologicalmodularformslevel} and \cite{chua} for $\pi_* \TMF^{C_2}$ without localizing at any prime. Similarly, Theorem \ref{thm_C3_3loc_main} determines the $\TMF$-module structure of the 3-local fixed points $\TMF^{C_3}$. As the homotopy groups of $\TMF^{C_3}$ had not been computed before, our work completes the calculation of the cyclic-group-equivariant $\TMF$ of prime order.

We also compute the $\RO(C_2)$-graded equivariant $\TMF$ using the above fiber sequence, together with results from \cite{lin2024topologicalellipticgenerai}. The twists of $C_2$-equivariant $\TMF$ are classified by $[BC_2, P^4 BO] \simeq \Z/8$ (see \cite{Lurie2009} and \cite{ando2010twistsktheorytmf}), and we determine the $\TMF$-module structure of each twisted $\TMF$. The main results appear in Theorems~\ref{thm_nontwist_C2} and~\ref{thm_C2_twisted} of Section~\ref{sec_app1}, where we identify an elegant pattern of cell diagrams (see Figure \ref{celldiag_TMFZ/2}). 
The results verify the {\it level-rank dualities} between $C_2 = O(1)$-equivariant and $\Spin(k)$-equivariant $\TMF$ (see Section~\ref{subsec_notations} for our conventions on $\RO(G)$-gradings and dualities):
\ie
	\TMF[k\lambda]^{C_2} \simeq D \left(\TMF[\overline V_{\Spin(k)}]^{\Spin(k)} \right) \mbox{ for } 2 \le k \le 6, 
\fe
mirroring the dualities known in the context of modular tensor categories. These equivalences are regarded as variants of the level-rank dualities for $U/SU$ and $Sp/Sp$, verified in \cite{lin2024topologicalellipticgenerai}. 

This paper is organized as follows. Section \ref{sec_preliminaries} reviews necessary preliminaries. In Section \ref{sec_general}, we introduce the general strategy and set up the fiber sequence \eqref{seq_intro}. 
Sections \ref{sec_app1} and \ref{sec_C3} apply this strategy to determine the structures of $C_2$- and $C_3$-equivariant $\TMF$, respectively. 
Appendix \ref{appendix_TJF2inverted} explains the $\TMF$-module structure of the $3$-local Topological Jacobi Forms used in Section \ref{sec_C3}, while Appendix \ref{appendix_TJF2} discusses the $2$-local case. The full analysis of the $2$-local $\TJF$ is in \cite{Tominaga}; Appendix \ref{appendix_TJF2} specifically highlights some properties of $\TJF$ from the spectral sequence computation of $\pi_* \TJF$. 

\subsection{Notations and conventions}\label{subsec_notations}

\begin{itemize}
    \item $\cM^{\ori}$ denotes the spectral Deligne-Mumford stack of oriented elliptic curves, and $\cE^{\ori} \to \cM^{\ori}$ denotes the universal oriented curve. In particular, $\TMF$, the spectrum of Topological Modular Forms, is the global section of the structure sheaf, $\TMF = \Gamma(\cM^{\ori}, \cO_{\cM^{\ori}} ).$
    \item For a positive integer $n$, we denote by $C_n$ the cyclic group of order $n$, and regard $C_n$ as a subgroup of $U(1)$ by identifying it with the group of $n$-th roots of unity. 
    \item Let $\mathrm{Sp}$ denote the stable infinity category of spectra. For a compact Lie group $G$, we denote the stable infinity category of genuine $G$-spectra by $\mathrm{Sp}^G$. In particular, $S \in \mathrm{Sp}^G$ denotes the sphere spectrum. 

    \item We denote the suspension spectrum functor by $\Sigma^\infty \colon \cS_* \to \Sp$, where $\cS_*$ is the category of pointed spaces. Similarly, we denote the suspension spectrum functor with a disjoint base point by $\Sigma^\infty_+ \colon \cS \to \Sp$.
    
    \item We denote by $\eta \in \pi_1 S$ and $\nu \in \pi_3 S $ the {\it integral} (not $2$-local) generators of $\pi_1 S \simeq \Z/2$ and $\pi_3 S \simeq \Z/24$, which are represented by the Hopf fibration for complex and quaternionic numbers, respectively.
    When we work $3$-locally in Section \ref{sec_C3}, we denote generators by $\alpha \in \pi_3 S_{(3)} \simeq \Z/3$ and $\beta = \langle \alpha, \alpha, \alpha \rangle \in \pi_{10} S_{(3)} \simeq \Z/3$. The localization map $S \to S_{(3)}$ sends $\nu$ to $\alpha$. 
    We use the same notation for the images of the elements $\pi_* S$ under the Hurewicz map $S \to \TMF$.

    \item For a compact Lie group $G$, $\RO(G)$ denotes the real representation ring of $G$. For each element $\tau \in \RO(G)$, we denote its representation sphere by $S^{\tau} \in \Sp^G$.
     \item For a compact Lie group $G$ and a genuine $G$-equivariant spectrum $E$, we denote by $E^G$ the genuine (categorical) $G$-fixed point spectrum of $E$. 
     
    \item Given an element $\tau \in \RO(G)$, we write 
    \begin{align}
        E[\tau] := E \otimes S^\tau \in \mathrm{Sp}^G. 
    \end{align}
    Its genuine $G$-fixed point spectrum $E[\tau]^G \in \mathrm{Sp}$ is the spectrum that represents the corresponding $\RO(G)$-graded $E$-cohomology theory. 
    \item 
    For a real $G$-representation $V$, we denote by
    \begin{align}\label{eq_notation_chi}
    	\chi(V) \in \Map(S^0, S^V)^{G}
    \end{align}
    the unique nontrivial $G$-equivariant map that sends $0 \mapsto 0$ and $\infty \mapsto \infty$. We also denote by the same symbol the $G$-equivariant map
    \begin{align}\label{eq_notation_chi_map}
    	\chi(V) := \id_E \otimes \chi(V) \colon E \to E \otimes S^V = E[V]
    \end{align}
    for any $G$-equivariant spectrum $E$. 
    Its homotopy class is called the {\it Euler class} associated with the representation $V$, and we again denote it by the same symbol $\chi(V) \in \pi_0 E[V]^G$.
    \item For an element $\tau \in \RO(G)$, let us write $\overline{\tau} := \tau - \dim(\tau) \cdot 1 \in \RO(G)$ where $1 = \underline{\R} \in \RO(G)$ is the class of the one-dimensional trivial representation. 
    \item We employ the following notations for the representations of interest in this paper: 
    \begin{itemize}
    	\item $\mu \in \RO( U(1))$: the fundamental (tautological) representation of $U(1)$, i.e., the real $2$-dimensional vector space $\R^2 \simeq \C$ with the complex multiplication.
    	\item $\rho_n \in \RO(C_n)$ for each positive integer $n$: the restriction of the fundamental representation of $U(1)$, 
    	\begin{align}
    		\rho_n:=\Res_{U(1)}^{C_n} \mu. 
    	\end{align}
    	\item $\lambda \in \mathrm{Rep}_O(C_2)$: the fundamental real $1$-dimensional representation. We have $\rho_2 \simeq 2 \lambda $.
        \item $V_G \in \Rep_O(G)$ for $G = \Spin(k), \Sp(k), \SU(k)$: the fundamental (a.k.a. vector or defining) representation of $G$. 
    \end{itemize}

	\item \label{notation_dual} Let $R$ be an $E_\infty$ ring spectrum. For a dualizable object $x \in \Mod_R$, we denote by $D_R(x) = \Hom_R(x, R)$ its dual in $\Mod_R$. $R$ is mostly $\TMF$ in this article, so we adopt the shorthand $D:= D_{\TMF}$. 

	\item  We use the following conventions for modular forms. We denote by $$\MF:= \Z[c_4, c_6, \Delta^\pm]/(c_4^3 - c_6^2 - 1728\Delta)$$ the ring of weakly holomorphic integral modular forms (i.e., holomorphic away from the cusps, and with integral Fourier coefficients in the variable $q =\exp(2\pi i \tau)$). 
    Capitalized ``Modular Forms'' means weakly holomorphic modular forms in this paper. 
    Denote $ \MF|_{\deg = m}$ be the set of weakly holomorphic modular forms of weight $\frac{m}{2}$. In particular, we have the edge homomorphism
    \begin{align}
    	e_{\MF} \colon \pi_{m} \TMF  \to  \MF|_{\deg = m}. 
    \end{align}
    \item \label{notation_JF} We use the conventions for Jacobi forms following \cite{dabholkar2014quantum,Gristenko2020382}. 
    We denote by $\mathfrak{H} := \{\tau \in \C \mid \mathrm{Im}(\tau ) > 0 \}$ the upper half space of the complex plane. 
    For each $k \in \Z_{\ge 0}$ and $w \in \Z$, consider holomorphic functions on $(z, \tau) \in \C \times \mathfrak{H}$ satisfying the transformation properties
    \begin{align}
    	\phi\left(\frac{a\tau + b}{c \tau + d}, \frac{z}{c\tau + d} \right) &= (c\tau + d)^w e^{\frac{\pi i k cz^2}{c\tau + d}} \phi(\tau, z),  \\
    	\phi(\tau, z + \lambda \tau + \mu) &= e^{-\pi i  k(\lambda ^2 \tau + 2 \lambda z) } \phi(\tau, z),
    \end{align}
    for all $\begin{pmatrix}
    	a & b \\ c & d 
    \end{pmatrix} \in SL(2, \Z) $ and $(\lambda, \mu ) \in \Z^2$, and Fourier expansions
    \begin{align}
    \phi(q, y) = \sum_{r \in \Z + \frac{k}{2}} \sum_{n \ge N} c(n, r) q^n y^r,
    \end{align}
    for some integer $N$, where $(q, y) = (\exp(2\pi i \tau), \exp(2\pi i z))$. 
    \begin{itemize}
    	\item Such functions are called {\it weakly holomorphic} Jacobi forms of index $\frac{k}{2}$ and weight $w$.
    	\item If $c(n, r) \neq 0$ only when $n \ge 0$, then such functions are called {\it weak} Jacobi forms. In addition, if $\phi$ satisfies $c(n, r) \neq 0$ only when $r^2 \ge 4kn$, then $\phi$ is called a {\it holomorphic} Jacobi form. We do not handle weak or holomorphic Jacobi forms in this paper.
    	\item If all Fourier coefficients $c(n,r)$ are integers, we add the adjective {\it integral} in all the above cases.
    \end{itemize}
    In the text, we capitalize the first letters in ``Jacobi Forms'' to mean {\it weakly holomorphic Jacobi forms} and denote by $\JF_k$ the set of all integral Jacobi Forms of index $\frac{k}{2}$. We put the $\Z$-grading on $\JF_k$ so that $\JF_k|_{\deg = m}$ consists of Jacobi Forms with weight $w = -k+\frac{m}{2}$. This convention makes $\JF_k$ a $\Z$-graded module over the $\Z$-graded ring $\MF$. As will be recalled in Section \ref{sec_preliminaries}, we have a canonical map
    \begin{align}
    	e_{\JF} \colon \pi_{m}\TJF_k = \pi_m \Gamma(\mathcal{E}^\ori; \mathcal{O}_{\mathcal{E}^\ori}(ke)) \to    \JF_k|_{\deg = m}. 
    \end{align}
    \item \label{notation_a} For notational ease, we employ the following notations for three of the generators of the $\Z$-graded ring $\oplus_k \JF_k$ of integral Jacobi Forms:
    \begin{align}
    	a &:= \phi_{-1, \frac12} = 	\frac{\theta_{11}(z, q)}{\eta^3(q)}  
    	=(e^{\pi i z} - e^{-\pi i z}) \prod_{m \ge 1} \frac{(1-q^m e^{2\pi i z})(1-q^me^{-2\pi i z})}{(1-q^m)^2} \in \JF_1|_{\deg = 0},  \label{eq_notation_a} \\
    	b & := \phi_{0, 1} = -\frac{3}{\pi^2}\wp(z, q)\frac{\theta^2_{11}(z, q)}{\eta^6(q)}   \in \JF_2|_{\deg = 4}, \label{eq_notation_b} \\
    	c &:= \phi_{0, \frac{3}{2}} = \frac{\theta_{11}(2z, q)}{\theta_{11}(z, q)} \in \JF_3|_{\deg = 6}, \label{eq_notation_c}
    \end{align}
    where the notation $\phi_{w, k}$ follows \cite{Gritsenko:1999fk}. 
  
\end{itemize}

\section{Preliminaries}\label{sec_preliminaries}

\subsection{Gepner-Meier's genuine equivariant $\TMF$}\label{subsec_GepnerMeier}

We briefly review the genuine equivariant $\TMF$ developed by Lurie and Gepner-Meier, referring to \cite{GepnerMeier} for complete details.  
Spectral algebraic geometry, as introduced and explored by Lurie in \cite{LurieElliptic1, LurieElliptic2, LurieElliptic3},  provides a conceptual framework of elliptic cohomology with integral coefficients. 
Denote by $\cE^\ori \to \cM^\ori$ the universal oriented elliptic curve over $\cM^\ori$ as a spectral Deligne-Mumford stack (the term ``spectral algebraic'' is henceforth often omitted). Then, the spectrum \textit{Topological Modular Forms} is defined to be the global section of the structure sheaf
\begin{align}\label{eq_SAG_ellcoh}
	\TMF := \Gamma(\cM^\ori; \mathcal{O}_{\cM^{\ori}} ) \in \CAlg.
\end{align}

Gepner and Meier refined $\TMF$ to a genuine $G$-equivariant spectrum for compact Lie groups $G$. They constructed the {\it equivariant elliptic cohomology functor }
\begin{align}\label{eq_Ell_GM}
	\Ell \colon \mathcal{S}_{\rm Orb} \to \mathrm{Shv}(\cM^\ori), 
\end{align}
where $\cS_{\rm Orb}$ is the category of orbispaces and $\mathrm{Shv}(\cM^\ori)$ is the sheaf category on the big \'{e}tale site on $\cM^\ori$. Note that a stack over $\cM^\ori$ can be regarded as a sheaf by taking the corepresented functor.
The category $\cS_{\rm Orb}$ has topological stacks $\bfB G = [*//G]$ for each compact Lie group $G$ as objects. The image $\Ell(\bfB A) $ for each compact abelian Lie group $A$ is defined to be the hom stack $\Hom(\hat{A}, \cE)$, where $\hat{A}$ is the Pontryagin dual of $A$.  In particular, 
\begin{align}\label{eq_EllBU(1)=E}
	\Ell(\bfB U(1)) \simeq \mathcal{E}^\ori , \quad  \Ell(\bfB C_n) \simeq \cE^\ori[n]
\end{align}
where $\cE^\ori[n] \subset \cE^\ori$ is the $n$-torsion of elliptic curves. The functor \eqref{eq_Ell_GM} is given by the left Kan extension from abelian group cases. In general, $\Ell(\bfB G)$ can be regarded as a spectral algebraic counterpart of the complex analytic moduli stack of flat $G$-bundles on the dual elliptic curve.

Moreover, for each compact Lie group $G$, by restricting the domain $\cS^G_* \subset \cS_{\Orb}$ and extending the target of $\Ell$ to the category of quasicoherent sheaves on $\Ell(\bfB G)$, they obtained a colimit-preserving functor
\begin{align}\label{eq_EllG}
	\widetilde{\cEll}_G \colon \mathcal{S}^{G}_* \to \QCoh(\Ell(\bfB G))^\op.  
\end{align}
Composing with the global section functor $\Gamma$, we obtain a colimit-preserving functor
\begin{align}\label{eq_GammaEllG}
	\Gamma	\widetilde{\cEll}_G \colon \mathcal{S}^G_* \to \Spectra^\op, \quad X \mapsto \Gamma(\Ell(\bfB G); \widetilde{\cEll}_G(X)).
\end{align}
They showed that the functor \eqref{eq_GammaEllG} is represented by a genuine $G$-spectrum, also denoted by $\TMF \in \Spectra^G$, and demonstrated its functoriality with respect to $G$. In this construction, we can identify the global section of sheaves with the $G$-equivariant cohomology
\begin{align}\label{eq_def_TMFcoh}
	\lMap_G(X, \TMF)^G \simeq  \Gamma(\Ell(\bfB G); \widetilde{\cEll}_G(X)).
\end{align}
In particular, we have
\begin{align}
	(\TMF)^G \simeq \Gamma(\Ell(\bfB G); \mathcal{O}_{\Ell(\bfB G)}).
\end{align}

For each virtual representation $V \in \RO(G)$, we denote its $V$-shift as
\begin{align}
    \TMF[V] \coloneqq \TMF \otimes S^{V} \in \Sp^G
\end{align}
and its $\RO(G)$-graded $\TMF$ homology as
\begin{align}\label{eq_def_ROGTMF}
	\TMF[V]^G \coloneqq  (\TMF \otimes S^V)^G = \TMF(S^{-V})^G = \Gamma(\Ell(\bfB G), \widetilde{\cEll}(S^{-V})). 
\end{align}

An essential feature of genuine equivariant $\TMF$ is dualizability: \footnote{By contrast, equivariant $\KU$ is not dualizable.}
\begin{fact}[{Dualizability of $\TMF^G$ \cite{GepnerMeierNEW}}]\label{fact_dualizability_TMF}
	For any compact Lie group $G$, $\TMF^G$ is dualizable in $\Mod_\TMF$, and its $\TMF$-dual $D(\TMF^G)$ is equivalent to $\TMF[-\Ad (G)]^G$.  
\end{fact}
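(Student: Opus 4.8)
The plan is to realize both $\TMF^G \simeq \Gamma(\Ell(\bfB G); \cO_{\Ell(\bfB G)})$ and $\TMF[-\Ad(G)]^G \simeq \Gamma(\Ell(\bfB G); \widetilde{\cEll}_G(S^{\Ad(G)}))$ as global sections over $\cM^{\ori}$ and to deduce the statement from Grothendieck--Serre (coherent) duality in spectral algebraic geometry, applied to the structure morphism $\pi \colon \Ell(\bfB G) \to \cM^{\ori}$. For dualizability, the first point is that $\pi$ is proper, locally almost of finite presentation, and of finite Tor-amplitude. This is transparent when $G$ is abelian, where $\Ell(\bfB G) = \Hom(\hat G, \cE^{\ori})$ is a finite product of copies of $\cE^{\ori}$ with a finite flat subgroup scheme of $\cE^{\ori}$, hence a flat proper group scheme over $\cM^{\ori}$; for general compact $G$ one works with $\Ell(\bfB G)$ as a proper spectral Deligne--Mumford stack over $\cM^{\ori}$. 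Lurie's coherence and properness theorems for $\QCoh$ then imply that $\pi_*$ preserves perfect complexes, so $\TMF^G = \Gamma(\cM^{\ori}; \pi_*\cO)$ is perfect over $\TMF$, hence dualizable.

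To identify the dual, coherent duality supplies a right adjoint $\pi^!$ to $\pi_*$ with relative dualizing complex $\omega_\pi := \pi^!\cO_{\cM^{\ori}}$ and a natural equivalence $D(\pi_*\mathcal{F}) \simeq \pi_*(D(\mathcal{F})\otimes\omega_\pi)$ for perfect $\mathcal{F}$; taking $\mathcal{F} = \cO$ gives $D(\TMF^G) \simeq \Gamma(\cM^{\ori}; \pi_*\omega_\pi)$. The remaining task is to identify these global sections with $\TMF[-\Ad(G)]^G = \Gamma(\Ell(\bfB G); \widetilde{\cEll}_G(S^{\Ad(G)}))$. Here $\pi$ is Gorenstein: for $G$ abelian, $\Ell(\bfB G)$ is a flat group scheme, and translation-invariance of its group structure pins down its relative cotangent complex, hence $\omega_\pi$, in terms of the ``elliptic Lie algebra'' of $G$, whose fibre at the identity section is governed by $\Ad(G) = \mathrm{Lie}(G)$. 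The cleanest way to make the matching precise, and to see why $\Ad(G)$ is exactly the right twist, is homotopy-theoretic: apply the colimit-preserving functor $\widetilde{\cEll}_G$ to the Atiyah--Wirthmüller duality data in $\Sp^G$ --- equivalently, to Klein's identification of the dualizing spectrum of the compact Lie group $G$ with the representation sphere $S^{\Ad(G)}$ --- which transports Spanier--Whitehead duality of genuine $G$-spectra into Grothendieck duality for $\pi$ over $\cM^{\ori}$ and carries the dualizing twist to $\widetilde{\cEll}_G(S^{\Ad(G)})$. As consistency checks: for $G = U(1)$ one has $\Ad(U(1)) = \underline{\R}$, $\omega_\pi$ is the relative dualizing complex $p^*\omega[1]$ of $\cE^{\ori} \to \cM^{\ori}$, and the identity $\Gamma(\cE^{\ori}; p^*\omega) \simeq \Sigma^{-2}\TMF^{U(1)}$ from \eqref{seq_intro} gives $D(\TMF^{U(1)}) \simeq \Sigma^{-1}\TMF^{U(1)} \simeq \TMF[-\Ad(U(1))]^{U(1)}$, in agreement with $D(\TMF\oplus\Sigma\TMF) \simeq \TMF\oplus\Sigma^{-1}\TMF$; for $G = C_n$ one has $\Ad(C_n) = 0$, and the claim reduces to self-duality of $\TMF^{C_n}$, matching the triviality of the relative dualizing sheaf of the finite flat group scheme $\cE^{\ori}[n] \to \cM^{\ori}$.

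I expect the main obstacle to be exactly this last identification of $\Gamma(\omega_\pi)$ with $\TMF[-\Ad(G)]^G$, carried out uniformly over all of $\cM^{\ori}$: at primes dividing $n$ (or $|W|$ for non-abelian $G$) the morphism $\pi$ is no longer étale or smooth, so naive Kähler-differential bookkeeping is unavailable and one must rely on the Gorenstein property, on the translation-invariance of the group structure, and on the compatibility of $\Ell$ with the transfer maps that encode Atiyah duality. For the cases $G = C_n$ and $G = U(1)$ needed in this paper, however, this reduces to the two already-recorded facts quoted above, so no additional work is required here.
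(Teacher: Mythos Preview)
The paper does not prove this statement at all: it is recorded as a \emph{Fact} with a bare citation to \cite{GepnerMeierNEW}, so there is no proof in the paper for your proposal to be compared against. Your task was to compare with the paper's own argument, and here there is none.

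That said, as a standalone sketch your outline is the natural strategy --- Grothendieck--Serre duality for the proper morphism $\pi\colon \Ell(\bfB G)\to\cM^{\ori}$, then identification of $\omega_\pi$ with the $\Ad(G)$-twist --- and you correctly flag the genuine gap: transporting Atiyah/Wirthm\"uller duality through $\widetilde{\cEll}_G$ so that the dualizing object on the algebro-geometric side matches $\widetilde{\cEll}_G(S^{\Ad(G)})$. One caution: for non-abelian $G$, $\Ell(\bfB G)$ is defined by left Kan extension from abelian groups, and your assertion that it is a proper spectral Deligne--Mumford stack of finite Tor-amplitude over $\cM^{\ori}$ is not automatic from that description; this is itself part of what \cite{GepnerMeierNEW} establishes. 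Your reduction at the end --- that for $G=U(1)$ and $G=C_n$ the claim becomes the known identifications $D(\TJF_0)\simeq\Sigma^{-1}\TJF_0$ and self-duality of $\TMF^{C_n}$ --- is correct and is exactly how the paper \emph{uses} the Fact (see \eqref{eq_twisted_dual}, \eqref{eq_duality_TJF}, \eqref{eq_duality_TMFCn}), so for the purposes of this paper nothing more is needed.
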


Consequently, for not only inclusion but {\it any} Lie group homomorphism $f \colon G \to H$, we can define the {\it transfer map along $f$}
\begin{align}\label{eq_tr_f}
	\tr_f \colon	\TMF[-\Ad(G)]^G \to \TMF[-\Ad(H)]^H 
\end{align}
to be the dual of the restriction map
$\res_f \colon \TMF^H \to \TMF^G$.
Given $(G, H)$, if a unique or natural map $f: G \to H$ exists such that its choice is unambiguous within the context, we often write $\res_f$ as $\res_H^G$ and $\tr_f$ as $\tr_G^H$.

We also note that, for every $V \in \RO(G)$, $\TMF[V]^G \in \Mod_\TMF$ has the dual
\begin{align}\label{eq_twisted_dual}
	D(\TMF[V]^G) \simeq \TMF[-V-\Ad(G)]^G, 
\end{align}
with the evaluation map 
\begin{align}\label{eq_coev_duality_TMF}
	\TMF[V]^G \otimes \TMF[-V -\Ad(G)]^G \xrightarrow{\rm multiplcation} \TMF[-\Ad(G)]^G \xrightarrow{\tr_G^e} \TMF. 
\end{align}

This paper mainly considers the cases $G= U(1), C_n, Sp(1)$. The $U(1)$- and $Sp(1)$-equivariant $\TMF$ are more accessible and their structures are well understood; see the following subsection and \cite[Appendix~A and B]{lin2024topologicalellipticgenerai}. 
Our main objective is to analyze the $C_n$-equivariant $\TMF$ by leveraging the $U(1)$-equivariant case. 

\subsection{$U(1)$-equivariant $\TMF$ $=$ Topological Jacobi Forms}\label{subsec_TJF}

Here we summarize the theory of $U(1)$-equivariant $\TMF$. The $\RO(U(1))$-graded $\TMF$ are also known as {\it Topological Jacobi Forms}, as they are the spectral refinements of the graded ring of weakly holomorphic Jacobi forms. 

\begin{defn}[{$\TJF_k$}]\label{def_TJF}
	For each integer $k$, we define
	\begin{align}\label{eq_def_TJF}
		\TJF_k := \TMF[k\mu]^{U(1)} \simeq \Gamma(\cE^\ori; \cO_{\cE^\ori}(ke)) , 
	\end{align}
	where $\mu \in \RO_{O}(U(1))$ is the fundamental representation of $U(1)$. 
\end{defn}
The second equivalence in \eqref{eq_def_TJF} follows from the identification.
\begin{align}
    \widetilde{\cEll}_{U(1)}(S^\mu) \simeq \cO_{\cE^\ori}(-e)
\end{align}
obtained via the cofiber sequence in $\cS^{U(1)}_*$
\begin{align}\label{eq_chi_sigma}
	U(1)_+ \to S^0 \to S^{{\mu}}.
\end{align} 
As $\widetilde{\mathcal{E}ll}_{U(1)}$ is symmetric monoidal \cite{GepnerMeier}\footnote{In general, the functor $\widetilde{\cEll}_G$ is \textit{not} symmetric monoidal, even in the case of $G=C_n$. Symmetric monoidality is one of the reasons why $U(1)$-equivariant $\TMF$ behaves well. }, it sends the Spanier-Whitehead dual $S^{-\mu}$ to $ \cO_{\cE^\ori} (e)$ and therefore $S^{-k\mu}$ to $\cO_{\cE^\ori} (ke)$.
Note that the tensor product on $\cO_{\cE^\ori} (ke)$ induces the multiplication
\begin{align}
	``\cdot" ~ \colon \TJF_k \otimes_\TMF \TJF_s \to \TJF_{k+s}, 
\end{align}
and the $\Z$-graded spectra $\{ \TJF_k \}_{k \in \Z}$ become an $\mathbb{E}_2$-ring object. 

$\{ \TJF_k \}_{k \in \Z_{\ge0}}$ can be regarded as the spectral refinement of Jacobi forms by the following observation. Using the flatness of the map $\cE^\ori \to \cM^\ori$, we can show that the homotopy sheaf $\pi_{2m}\cO_{\cE^\ori} (ke)$ is isomorphic to $p^* \omega^m \otimes \cO_{\cE}(ke)$ as sheaves on the underlying stacks, which is the universal elliptic curve $\cE$. We consider a further base change to the complex universal elliptic curve $\cE_\C$. Recall that the function $a= \phi_{-1, \frac12} \in \JF_1|_{\deg=0}$ in \eqref{eq_notation_a} precisely vanishes at $z=0$ with order $1$. Therefore, multiplication by $a$ yields an isomorphism of line bundles
\begin{align}
    \cO_{\cE_\C}(ke) \otimes \omega^m \simeq L_{m, 2k}
\end{align}
where $L_{m,2k}$ is the Looijenga line bundle in $\cE_\C$ defined in \cite{looijenga_root_1976}. See \cite{BauerMeierTJF} for further explanation. As the global section of the Looijenga line bundle is the set of Jacobi forms, we obtain an isomorphism
\begin{align}\label{eq_ak}
	a^k \cdot ~ \colon \Gamma(\cE_{\C}; \cO_{\cE_\C}(ke) \otimes \omega^{m}) \simeq \JF_k^\C|_{\deg = 2m} . 
\end{align}
The edge homomorphism of the descent spectral sequence for $\TJF_k$ gives a map to the group of integral Jacobi forms of index $\frac{m}{2}$
\begin{align}
	e_\JF \colon \pi_\bullet \TJF_k \to  \JF_k|_{\deg = \bullet},
\end{align}
and the following diagram commutes:
\begin{align}\label{diag_TJF_JF}
	\xymatrix{
		\pi_\bullet \TJF_k \ar[rr]^-{e_\JF}  \ar@{=}[d] && \JF_k |_{\deg = \bullet} \ar@{_{(}->}[d]^-{a^{-k} \cdot}\\
		\pi_\bullet \Gamma(\cE, \cO_\cE(ke)) \ar[rr]^-{(\cE_\C \to \cE)^*} &&\Gamma(\cE_{\C}; \cO_{\cE_\C}(ke) \otimes \omega^{\bullet/2})
	}
\end{align}
Therefore, the graded $\mathbb{E}_2$-ring spectra $\{ \TJF_k \}_{k \in \Z_{\ge0}}$ are the spectral refinements of the graded ring of integral Jacobi forms. 

The equivariant Euler class of the fundamental representation
\begin{align}
	\chi(\mu) \in \pi_0 \TJF_1
\end{align}
has the image
\begin{align}\label{eq_char_a}
	e_\JF(\chi(\mu)) = a \in \JF_1|_{\deg=0}. 
\end{align}
It is an essential ingredient of the {\it stabilization-restriction fiber sequence}: for each $k$,
\begin{align}\label{eq_TJF_stabres}
	\TMF[2k-1] \xrightarrow{\tr_e^{U(1)}}\TJF_{k-1} \xrightarrow{\chi(\mu) \cdot} \TJF_{k} \xrightarrow{\res_{U(1)}^e} \TMF[2k].
\end{align}
The middle arrow $\chi(\mu) \cdot$ corresponds to the canonical map $\cO_{\cE^\ori}((k-1)e) \to \cO_{\cE^\ori}(ke) $. 
By \eqref{diag_TJF_JF}, the following diagram commutes:
\begin{align}\label{diag_stabres_JF_TJF}
	\xymatrix{
		\pi_\bullet	\TJF_{k-1} \ar[r]^-{\chi(\mu)\cdot} \ar[d]^-{e_\JF}&\pi_\bullet \TJF_{k} \ar[rrr]^-{\res_{U(1)}^e} \ar[d]^-{e_\JF}&&&\pi_{\bullet-2k} \TMF \ar[d]^-{e_\MF} \\
		\JF_{k-1}|_{\deg = \bullet} \ar[r]^-{a \cdot} & \JF_{k}|_{\deg = \bullet} \ar[rrr]^-{\res_{z=0} \colon \phi(z, q) \mapsto \phi(0, q)} &&&\MF|_{\deg = \bullet - 2k} 
	}
\end{align}

In fact, the stabilization-restriction fiber sequence \eqref{eq_TJF_stabres} for $U(1)$-equivariant $\TMF$ is a special case of the more general construction in \cite[Proposition~4.45]{lin2024topologicalellipticgenerai}. 
In this paper, we heavily use another special case, that of the $C_2$-equivariant $\TMF$:
\begin{align}\label{eq_C2_stabres}
		\TMF[k-1] \xrightarrow{\tr_e^{C_2}}\TMF[(k-1)\lambda]^{C_2} \xrightarrow{\chi(\lambda) \cdot} \TMF[k\lambda]^{C_2} \xrightarrow{\res_{C_2}^e} \TMF[k], 
\end{align}
where $\lambda \in \RO(C_2)$ is the fundamental real $1$-dimensional representation of $C_2$. 
The cases of $Sp(1)$ and $\Spin(k)$ also appear in \eqref{eq_TEJF_stabres} and \eqref{eq_stabres_spin} below. 

The $\TMF$-module structure of $\TJF_k$ for $k \ge 0$ is well understood. 
\begin{fact}[Bauer-Meier \cite{BauerMeierTJF}]\label{fact_TJF_cellstr}
Let $\tr \colon \Sigma \bC P^\infty_+ \to S^0$ be the circle-equivariant transfer map and $q \colon \bC P^{\infty}_+ \to S^0$ be the trivial map. Define a stable cell complex $P_k$ for $k \in \Z_{\ge0}$ by
\begin{align}\label{eq_def_Pm}
	P_k := \cofib\left( \Sigma \CP^{k-1}_+ \xrightarrow{\tr \oplus \Sigma q} S^0 \oplus S^1 \right),
\end{align}
where the domain is restricted to $\Sigma \bC P^{k-1}_+$. Then we have an isomorphism of $\TMF$-modules
    \begin{align}\label{eq_TJF_cellstr}
        \TJF_k \simeq \TMF \otimes P_k.
    \end{align}
\end{fact}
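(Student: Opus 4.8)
The plan is to realize $\TJF_k = \Gamma(\cE^\ori;\cO_{\cE^\ori}(ke))$ as $\TMF\otimes P_k$ by descent from the universal oriented curve to $\cM^\ori$, i.e.\ by showing that $\cO_{\cE^\ori}(ke)$, pushed forward to $\cM^\ori$, is the sheaf attached to the $\TMF$-module $\TMF\otimes P_k$. Concretely, I would build a map $\TMF\otimes P_k \to \TJF_k$ and prove it is an equivalence by checking it on homotopy sheaves over $\cM^\ori$ (equivalently, after base change along the affine \'etale cover by $M(\Gamma)$ or by working with the descent spectral sequence). The cell complex $P_k$ is engineered precisely so that its cells match the short exact sequence of coherent sheaves on $\cE^\ori$ relating $\cO(ke)$ to the structure sheaf and the jets along the zero section; the job is to promote that classical statement to the spectral level.

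First I would recall the geometric input on the underlying curve $\cE\to\cM$: for the zero section $e\colon\cM\to\cE$ with conormal bundle $\omega$, there is the filtration of $\cO_\cE(ke)$ by order of pole, with graded pieces $e_*\omega^{-j}$ for $0\le j\le k$, and $p_*\cO_\cE(ke)\simeq \cO_\cM \oplus \bigoplus_{j=1}^k \omega^{-j}$ (using $R^1p_*\cO_\cE=\omega^{-1}$ and the fact that higher pushforwards of $\cO(ke)$ vanish for $k\ge 1$, by Riemann--Roch on the elliptic fibers — degree $k\ge 1$, genus $1$). On the spectral side this is encoded by $\Sigma\CP^{k-1}_+$: the transfer $\tr\colon\Sigma\CP^\infty_+\to S^0$ restricted to $\Sigma\CP^{k-1}_+$ sees exactly the "poles up to order $k$" data, while $\Sigma q$ provides the extra $S^1$ accounting for the $R^1$ contribution and the shift by $\omega$. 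So I would identify $\widetilde{\cEll}_{U(1)}$ applied to $P_k$ — using that $\widetilde{\cEll}_{U(1)}$ is symmetric monoidal and colimit-preserving, the cofiber sequence \eqref{eq_def_Pm}, the computation $\widetilde{\cEll}_{U(1)}(\Sigma^\infty_+\CP^\infty) \simeq p_*\cO_{\cE^\ori}$ (Atiyah--Segal-type completion plus the Gepner--Meier formalism), and the identification $\widetilde{\cEll}_{U(1)}(S^\mu)\simeq\cO_{\cE^\ori}(-e)$ already cited — to get $\widetilde{\cEll}_{U(1)}(P_k)\simeq p_*\cO_{\cE^\ori}(ke)$, whence $\TMF\otimes P_k\simeq \Gamma(\cM^\ori;p_*\cO_{\cE^\ori}(ke))\simeq\TJF_k$ after applying global sections. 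Alternatively, and perhaps more cleanly, I would take the map $S^0\oplus S^1 \to P_k$, smash with $\TMF$, and compare the induced map $\TMF\oplus\Sigma\TMF \to \TJF_k$ with the explicit description of $\TJF_k$ coming from the $U(1)$-cell structure of $S^{k\mu}$ and the stabilization-restriction fiber sequence \eqref{eq_TJF_stabres}, proving inductively in $k$ that the cofiber of $\TJF_{k-1}\xrightarrow{\chi(\mu)\cdot}\TJF_k$ is $\TMF[2k]$ in a way compatible with the cell of $P_k/P_{k-1}\simeq S^{2k}$ (note $\CP^{k-1}_+/\CP^{k-2}_+\simeq S^{2k-2}$, and the extra suspension from $\Sigma$ makes this $S^{2k-1}$, whose cofiber contributes $S^{2k}$).

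The main obstacle I expect is pinning down the attaching maps: it is not enough to match associated graded pieces, one must show the spectral $P_k$ (with its specific transfer-and-trivial-map attaching data) really is sent by $\widetilde{\cEll}_{U(1)}$ to $\cO_{\cE^\ori}(ke)$ with the correct extension class, not merely to something with the same homotopy sheaves. This is where the symmetric monoidality of $\widetilde{\cEll}_{U(1)}$ and the honest cofiber-sequence definition \eqref{eq_def_Pm} do the real work: the transfer map $\tr$ corresponds under $\widetilde{\cEll}_{U(1)}$ to the canonical inclusion of pole-free functions, so the cofiber is forced. I would therefore organize the proof around establishing $\widetilde{\cEll}_{U(1)}(\tr\colon\Sigma\CP^{k-1}_+\to S^0)$ explicitly, then read off everything else. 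A secondary (but routine) point is convergence and degeneration of the relevant descent spectral sequence, needed to conclude that an equivalence on homotopy sheaves over $\cM^\ori$ gives an equivalence of $\TMF$-modules after global sections; this follows from dualizability of $\TJF_k$ (a consequence of Fact~\ref{fact_dualizability_TMF}) and finiteness of $P_k$.
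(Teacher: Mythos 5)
First, note that the paper does not prove this statement at all: it is imported as a Fact from Bauer--Meier \cite{BauerMeierTJF}, so there is no in-paper argument to compare against, and your proposal has to be judged against the standard proof of that result. The correct skeleton is indeed close to what you are circling: apply $(\TMF\otimes -)^{U(1)}$ to the genuine $U(1)$-equivariant cofiber sequence $S(k\mu)_+\to S^0\to S^{k\mu}$, use that the fixed points of the free part satisfy $(\TMF\otimes \Sigma^\infty_+ S(k\mu))^{U(1)}\simeq \TMF\otimes\Sigma\CP^{k-1}_+$ (the Adams-type shift accounting for the suspension, exactly as in \eqref{eq_CnU1}), use the splitting $\TMF^{U(1)}\simeq\TMF\oplus\TMF[1]$ of \eqref{eq_TJF_0}, and then identify the resulting boundary map $\TMF\otimes\Sigma\CP^{k-1}_+\to\TMF\otimes(S^0\oplus S^1)$ with $\id_\TMF\otimes(\tr\oplus\Sigma q)$.

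The gaps in your write-up are concrete. (i) Category error: $P_k$, $\CP^{k-1}_+$ and the stable transfer $\tr$ are non-equivariant objects, so expressions like $\widetilde{\cEll}_{U(1)}(P_k)$ or $\widetilde{\cEll}_{U(1)}(\tr)$ do not typecheck --- the functor \eqref{eq_EllG} is defined on pointed $U(1)$-spaces and lands in $\QCoh(\cE^\ori)$; in particular your ``computation'' $\widetilde{\cEll}_{U(1)}(\Sigma^\infty_+\CP^\infty)\simeq p_*\cO_{\cE^\ori}$ is ill-typed (the right-hand side lives on $\cM^\ori$) and also wrong in spirit, since $EU(1)$ relates by Atiyah--Segal completion to the formal completion of $\cE^\ori$ along $e$, not to $p_*\cO_{\cE^\ori}$. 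The equivariant input you actually need is the free-orbit computation for $S(k\mu)_+$, which you never invoke. (ii) Your classical associated graded is off: the $j=1$ piece $\omega^{-1}$ cancels against $R^1p_*\cO_\cE\simeq\omega^{-1}$, so $p_*\cO_\cE(ke)$ has rank $k$ (matching the $k$ cells of $P_k$ for $k\ge1$), not rank $k+1$ as your formula $\cO_\cM\oplus\bigoplus_{j=1}^k\omega^{-j}$ asserts. (iii) Most importantly, your ``cleaner'' inductive alternative only matches associated graded pieces: knowing $\cofib(\chi(\mu)\colon\TJF_{k-1}\to\TJF_k)\simeq\TMF[2k]$ does not determine the attaching data, and your proposed fix --- that ``the transfer corresponds under $\widetilde{\cEll}_{U(1)}$ to the canonical inclusion of pole-free functions, so the cofiber is forced'' --- is an assertion of exactly the step that constitutes the theorem, not an argument. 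The substance of Bauer--Meier's result is the identification of the connecting map (equivalently of $\tr_e^{U(1)}$ restricted along the free filtration) with the classical circle transfer on $\Sigma\CP^{k-1}_+$ together with the trivial map $\Sigma q$ under the splitting \eqref{eq_TJF_0}; without supplying that identification the proposal does not prove \eqref{eq_TJF_cellstr}.
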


\begin{rem}
    Recall that the cofiber of the transfer $\tr \colon \Sigma \bC P^{k-1} \to S^0$ is the stunted projective space $\Sigma^2 \bC P^{k-1}_{-1}$. The projection $S^0 \oplus S^1 \to S^0$ induces a map $P^{k} \to \Sigma^2 \bC P^{k-1}_+$ whose fiber is $S^1$. Therefore, $P_k$ can be thought of as ``$\Sigma^2 \bC P^{k-1}_{-1}$ with the 2-cell removed''.
\end{rem}

We can find the stable attaching maps of $\CP^k$ and, therefore, of $P_k$ in \cite{Mosher}.
In this paper, we mainly use 
\begin{align}
	P_0 & \simeq S^0 \oplus S^1  \\
	P_1 &\simeq S^0, \label{eq_P1} \\
	P_2 &\simeq S/\nu = S^0 \cup_\nu S^4, \label{eq_P2} \\
	P_3 &\simeq S^0 \cup_\nu S^4 \cup_\eta S^6, \label{eq_P3}\\
	P_4 & \simeq S^0 \cup_\nu S^4 \cup_{\eta \oplus 2\nu} (S^6 \oplus S^8).
    \label{eq_P4}
\end{align}
In the case of $k=0$, the isomorphism is explicitly given by \cite{GepnerMeier}
\begin{align}\label{eq_TJF_0}
	\res_e^{U(1)} \oplus \tr_{e}^{U(1)}	\colon \TMF \oplus \TMF[1] \to \TJF_0,
\end{align}
and in the case of $k=1$, it is given by
\begin{align}\label{eq_TJF_1}
\chi(\mu) \colon \TMF \simeq \TJF_1. 
\end{align}

The cell diagrams of $\TJF_k$ for $0 \le k \le 4$ are depicted in Figure \ref{celldiag_TJF}. 
Each dot labeled by an integer $n$ denotes a $\TMF$-cell of degree $n$. 

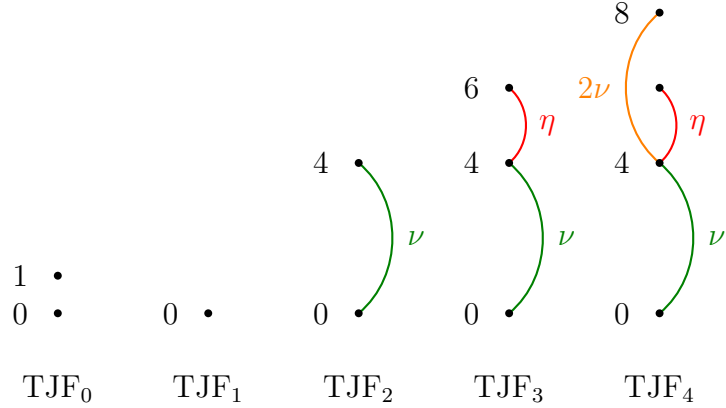
\begin{figure}[h]
	\centering
	\begin{tikzpicture}[scale=0.5]
		\begin{scope}[shift={(-4, 0)}]
			\begin{celldiagram}
				\n{0} \n{1}
				\foreach \y in {0,1} {
					\node [left] at (-0.5, \y) {$\y$};
				}
			\end{celldiagram}
			\node [] at (0, -2) {$\TJF_{0}$};
		\end{scope}
		
		\begin{scope}[shift={(0, 0)}]
			\begin{celldiagram}
				\n{0} 
				\foreach \y in {0} {
					\node [left] at (-0.5, \y) {$\y$};
				}
			\end{celldiagram}
			\node [] at (0, -2) {$\TJF_{1}$};
		\end{scope}
		
		\begin{scope}[shift={(4, 0)}]
			\begin{celldiagram}
				\nu{0}
				\n{0} \n{4} 
				\foreach \y in {0, 4} {
					\node [left] at (-0.5, \y) {$\y$};
				}
			\end{celldiagram}
			\node [] at (0, -2) {$\TJF_{2}$};
		\end{scope}
		\node [right, green!50!black] at (5, 2) {$\nu$};
		
		\begin{scope}[shift={(8, 0)}]
			\begin{celldiagram}
				\nu{0} \eta{4}
				\n{0} \n{4} \n{6} 
				\foreach \y in {0, 4, 6} {
					\node [left] at (-0.5, \y) {$\y$};
				}
			\end{celldiagram}
			\node [] at (0, -2) {$\TJF_{3}$};
		\end{scope}
		\node [right, green!50!black] at (9, 2) {$\nu$};
		\node [right, red] at (8.5, 5) {$\eta$};
		
		\begin{scope}[shift={(12, 0)}]
			\begin{celldiagram}
				\nu{0} \eta{4} \eightnu{4}
				\n{0} \n{4} \n{6} \n{8}
				\foreach \y in {0, 4, 8} {
					\node [left] at (-0.5, \y) {$\y$};
				}
			\end{celldiagram}
			\node [] at (0, -2) {$\TJF_{4}$};
		\end{scope}
		\node [right, green!50!black] at (13, 2) {$\nu$};
		\node [right, red] at (12.5, 5) {$\eta$};
		\node [left, orange] at (11, 6) {$2\nu$};
	\end{tikzpicture}
	\caption{The cell diagrams of $\TJF_k$.}\label{celldiag_TJF}
\end{figure}

The isomorphism in Fact~\ref{fact_TJF_cellstr} is compatible with the stabilization-restriction fiber sequence \eqref{eq_TJF_stabres} because the following diagram commutes:
\begin{align}\label{eq_attaching_P}
	\xymatrix{
		\TJF_{k-1} \ar[r]^-{\chi(\mu)} & \TJF_k \ar[r]^-{\res_{U(1)}^e} & \TMF[2k] \\
		P_{k-1} \ar[u]^-{\TMF \otimes - } \ar@{^{(}->}[r] & P_k \ar@{->>}[r] \ar[u]^-{\TMF \otimes - } & S^{2k} \ar[u]^-{\TMF \otimes - } 
	}
\end{align}
where the bottom row is the cofiber sequence induced by the inclusion $\CP^{k-2} \hookrightarrow \CP^{k-1}$ (see \eqref{eq_def_Pm}). 

$\TJF_{k}$ for negative $k$ follows from applying duality to Fact~\ref{fact_TJF_cellstr}. In this paper, $D$ and $D_S$ denote the duals in $\Mod_\TMF$ and $\Spectra$, respectively (see Section~\ref{subsec_notations}). We obtain
\begin{align}\label{eq_duality_TJF}
	\TJF_k \simeq D(\TJF_{-k})[1] \simeq \TMF \otimes D_S(P_{-k})[1]
\end{align}
by the dualizability of the equivariant $\TMF$ in \eqref{eq_twisted_dual}. 
For example, setting $k=-1$, we get 
	\begin{align}\label{eq_TJF-1}
	\TJF_{-1} \stackrel{\eqref{eq_duality_TJF}}{\simeq} D(\TJF_1)[1] \stackrel{\eqref{eq_TJF_1}}{\simeq} \TMF[1].  
\end{align}

Based on the computation \cite{Tominaga}, we set the notations for the elements in $\TJF_k$ to be used later.

\begin{defn}\label{fact_pi_TJF}~
	\begin{enumerate}
        \item \label{fact_pi7TJF2} We fix a generator of $\pi_7 \TJF_2 \simeq \bZ / 12$ and denote it by $\zeta$. Note that the map 
        \begin{align}
		\res_{U(1)}^e \colon \pi_7 \TJF_2 \to \pi_3  \TMF \simeq \Z\nu /24\nu
		\end{align}
        sends $\zeta$ to $2\nu$.
		\item \label{fact_pi7TJF3}
		 The group $\pi_7 \TJF_3 \simeq \Z /6$ is generated by
		\begin{align}
			\gamma := \chi(\mu) \cdot \zeta \in \pi_7 \TJF_3. 
		\end{align}
		In other words, the stabilization map $\chi(\mu) \cdot \colon \TJF_2 \to \TJF_3 $ induces a surjection
		\begin{align}
			\chi(\mu) \cdot \colon \pi_7 \TJF_2 = \Z\zeta/12\zeta \twoheadrightarrow \pi_7 \TJF_3 = \Z\gamma/6\gamma, \quad \zeta \mapsto \gamma. 
		\end{align}
	\end{enumerate}
\end{defn}
\begin{rem}\label{fact_TJF3toTJF4}
	The image of the unit $1 \in \pi_0 \TMF$ under the transfer map $\tr_e^{U(1)} \colon \TMF[7] \to \TJF_3$ is equal to $\gamma$.
	Therefore, the first map in the stabilization-restriction fiber sequence \eqref{eq_TJF_stabres} for $k=4$ can be regarded as the multiplication with $\gamma$
	\begin{align}\label{eq_stabres_TJF4}
		\TMF[7] \xrightarrow{\gamma \cdot }\TJF_3 \xrightarrow{\chi(\mu) \cdot} \TJF_4
	\end{align}
	in the graded $\mathbb{E}_2$-ring structure of $\TJF$.
\end{rem}

\subsection{$Sp(1)$-equivariant $\TMF$ $=$ Topological Even Jacobi Forms}\label{subsec_TEJF}

The $Sp(1)$-equivariant $\TMF$ is also understood. Our $\RO(C_2)$-graded $\TMF$ results require the $Sp(1)$-equivariant $\TMF$ (see \ref{sec_twisted_C2}), so we provide a brief overview. See \cite[Appendix~B]{lin2024topologicalellipticgenerai} for more details. 

Denote by $V_{Sp(1)} \in \RO(Sp(1))$ the real $4$-dimensional fundamental representation of $Sp(1)$. We adopt the name {\it Topological Even Jacobi Forms} for the $Sp(1)$-equivariant $\TMF$, and follow the grading convention
\begin{align}
	\TEJF_{2k} := \TMF[kV_{Sp(1)}]^{Sp(1)}, \quad k \in \Z.
\end{align}
We do not define $\TEJF_{k'}$ for odd $k'$. This grading convention ensures that the restriction along the inclusion $U(1) \hookrightarrow Sp(1)$ gives a map
\begin{align}\label{eq_res_Sp_U}
\res_{Sp(1)}^{U(1)} \colon \TEJF_{2k} \to \TJF_{2k}. 
\end{align}

The stabilization-restriction fiber sequence for $\TEJF$ becomes
\begin{align}\label{eq_TEJF_stabres}
\TMF[4k-1] \xrightarrow{\tr_e^{Sp(1)}}	\TEJF_{2k-2} \xrightarrow{\chi(V_{Sp(1)})} \TEJF_{2k} \xrightarrow{\res_{Sp(1)}^{e}} \TMF[4k].
\end{align}
Analogous to Fact~\ref{fact_TJF_cellstr}, there exists an even more straightforward identification of the $\TMF$ cell structures for $\TEJF$:
\begin{fact}\label{fact_cellstr_TEJF}
	We have an isomorphism of $\TMF$-modules
	\begin{align}
		\TEJF_{2k} \simeq \TMF \otimes \Sigma^{-4}\Sigma^{\infty}\HP^{k+1}. 
	\end{align}
	 Furthermore, we have a commutative diagram similar to \eqref{eq_attaching_P}, using the inclusion $\HP^{k} \hookrightarrow \HP^{k+1}$.
\end{fact}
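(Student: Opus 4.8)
The plan is to argue by induction on $k\ge 0$, using the stabilization--restriction fiber sequence \eqref{eq_TEJF_stabres} and matching it, step by step, with the skeletal filtration of $\HP^\infty=\bfB Sp(1)$. For the base case $k=0$ I would identify $\TEJF_0=\TMF^{Sp(1)}=\Gamma(\Ell(\bfB Sp(1));\cO)$ with $\TMF\otimes\Sigma^{-4}\Sigma^\infty\HP^1\simeq\TMF\otimes\Sigma^{-4}\Sigma^\infty S^4\simeq\TMF$; this is standard, since $\Ell(\bfB Sp(1))$ is the spectral moduli of semistable $SU(2)$-bundles on the dual elliptic curve, namely the quotient $\cE^\ori/(\pm 1)$ by the inversion involution, which is a genus-zero fibration over $\cM^\ori$, so taking fiberwise global sections of $\cO$ recovers $\TMF$ (see \cite{GepnerMeier} and \cite[Appendix~B]{lin2024topologicalellipticgenerai}).

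For the inductive step, suppose $\TEJF_{2k-2}\simeq\TMF\otimes\Sigma^{-4}\Sigma^\infty\HP^k$, compatibly with the restriction onto the top $\TMF$-cell. By \eqref{eq_TEJF_stabres}, $\TEJF_{2k}\simeq\cofib(\TMF[4k-1]\xrightarrow{\tr_e^{Sp(1)}}\TEJF_{2k-2})$, whereas $\HP^{k+1}$ is obtained from $\HP^k$ by coning off the quaternionic attaching map $\alpha_k\colon S^{4k+3}\to\HP^k$, so that $\TMF\otimes\Sigma^{-4}\Sigma^\infty\HP^{k+1}\simeq\cofib(\TMF[4k-1]\xrightarrow{\TMF\otimes\Sigma^{-4}\alpha_k}\TMF\otimes\Sigma^{-4}\Sigma^\infty\HP^k)$. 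It therefore suffices to show that, under the inductive identification, the transfer $\tr_e^{Sp(1)}$ agrees with $\TMF\otimes\Sigma^{-4}\alpha_k$ up to a unit of $\pi_0\TMF$. Granting this, the commutative square analogous to \eqref{eq_attaching_P}---with rows $\TEJF_{2k-2}\xrightarrow{\chi(V_{Sp(1)})}\TEJF_{2k}\xrightarrow{\res_{Sp(1)}^{e}}\TMF[4k]$ and $\Sigma^{-4}\Sigma^\infty\HP^{k}\hookrightarrow\Sigma^{-4}\Sigma^\infty\HP^{k+1}\to S^{4k}$---is then obtained by passing to cofibers.

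The hard part will be the identification of these two maps, and the idea is that both are the equivariant-$\TMF$ incarnation of one and the same piece of $Sp(1)$-geometry. On the transfer side, $S(V_{Sp(1)})$ is $Sp(1)$ with its free translation action, so \eqref{eq_TEJF_stabres} arises by applying $(\TMF\otimes-)^{Sp(1)}$ to the cofiber sequence $Sp(1)_+\wedge S^{(k-1)V_{Sp(1)}}\to S^{(k-1)V_{Sp(1)}}\xrightarrow{\chi(V_{Sp(1)})}S^{kV_{Sp(1)}}$ (following \cite[Prop.~4.45]{lin2024topologicalellipticgenerai}); its first term computes, after untwisting the free action, to $\Sigma^{4k-4}(\TMF\otimes Sp(1)_+)^{Sp(1)}\simeq\TMF[4k-1]$, via the Wirthm\"uller isomorphism $(\TMF\otimes Sp(1)_+)^{Sp(1)}\simeq\TMF[\dim Sp(1)]=\TMF[3]$. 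On the $\HP$ side, the $4j$-skeleton $\HP^{j-1}$ of $\bfB Sp(1)$ is the $Sp(1)$-homotopy quotient $S(V_{Sp(1)}^{\oplus j})_{h Sp(1)}\simeq S^{4j-1}/Sp(1)$ of the corresponding unit sphere, and comparing genuine $Sp(1)$-fixed points of free $Sp(1)$-cells with their homotopy orbits again produces exactly the shift by $\dim Sp(1)$; hence the skeletal cofiber sequences $\HP^{j-1}\hookrightarrow\HP^j\to S^{4j}$ and the attaching maps $\alpha_{j-1}$ descend from the same equivariant data as \eqref{eq_TEJF_stabres}. Matching connecting maps then reduces to comparing $\tr_e^{Sp(1)}$ with the $\TMF$-Hurewicz image of $\alpha_k$, which I would pin down first in the anchoring case $\alpha_0=\nu$ (reproving $\TEJF_2\simeq\TMF/\nu$) and then propagate by restricting along $U(1)\hookrightarrow Sp(1)$ to the already-known $U(1)$-equivariant cell structure of Fact~\ref{fact_TJF_cellstr}, via the cellular map $\CP^\infty\to\HP^\infty$, under which both the attaching maps and the transfers are compatible with $\res_{Sp(1)}^{U(1)}$.
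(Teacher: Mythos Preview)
The paper does not prove this statement; it is recorded as a \emph{Fact} and the reader is referred to \cite[Appendix~B]{lin2024topologicalellipticgenerai} for the $Sp(1)$-equivariant theory. So there is no in-paper argument to compare against, and your task is really to supply a proof.

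Your inductive plan is reasonable and you correctly isolate the crux: identifying the transfer $\tr_e^{Sp(1)}\colon \TMF[4k-1]\to\TEJF_{2k-2}$ with the $\TMF$-Hurewicz image of the quaternionic attaching map. Two comments on the sketch you give for that step.

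First, the ``restrict to $U(1)$'' fallback is not enough as stated. You would need $\res_{Sp(1)}^{U(1)}\colon \pi_{4k-1}\TEJF_{2k-2}\to \pi_{4k-1}\TJF_{2k-2}$ to be injective to conclude that two classes agreeing after restriction already agree in $\TEJF_{2k-2}$; this is not obvious in general and you have not argued it. (Also a small indexing slip: the Hopf map is $\alpha_1\colon S^7\to\HP^1$, not $\alpha_0$.)

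Second, the ``same $Sp(1)$-geometry'' idea is the right one, but it is cleaner to make it do all the work at once rather than feed an induction. Apply $(\TMF\otimes-)^{Sp(1)}$ to the single cofiber sequence $S((k{+}1)V_{Sp(1)})_+\to S^0\to S^{(k+1)V_{Sp(1)}}$. The middle term gives $\TMF^{Sp(1)}\simeq\TMF$, and the left term is a \emph{free} $Sp(1)$-spectrum with quotient $\HP^{k}$, so what you need is a natural Adams-type identification of $(\TMF\otimes Y_+)^{Sp(1)}$ for free $Sp(1)$-spaces $Y$ with a shift of $\TMF\otimes (Y/Sp(1))_+$, \emph{compatibly with the maps} $S(jV)\hookrightarrow S((j{+}1)V)$. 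Once that naturality is in place the desired cell structure and the commutative square analogous to \eqref{eq_attaching_P} fall out simultaneously, with no separate matching of attaching maps needed. Your paragraph gestures at exactly this, but stops short of establishing the natural equivalence; that is where the actual content lies.
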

In particular, we obtain the descriptions
\begin{align}
	\TEJF_0 &\simeq \TMF, \\
	\TEJF_2  &\simeq \TMF /\nu, \\
	\TEJF_4 &\simeq \TMF \otimes \left(S^0 \cup_\nu S^4 \cup_{2\nu} S^8 \right). \label{eq_TEJF4}
\end{align}

The restriction map \eqref{eq_res_Sp_U} is an isomorphism for $k=1$, 
\begin{align}
\res_{Sp(1)}^{U(1)} \colon \TEJF_2 \simeq \TJF_2 \stackrel{\eqref{eq_P2}}{\simeq} \TMF/\nu. 
\end{align}
Also, by the cell structure \eqref{eq_TEJF4}, we get
\begin{fact}\label{fact_zeta_TEJF}
	 The image of the unit through the map $\tr_e^{Sp(1)} \colon \TMF[7] \to \TEJF_2$ is equal to the element $\zeta \in \pi_{7} \TEJF_2 = \pi_7 \TJF_2 \simeq \Z\zeta/12\zeta$ defined in Fact \ref{fact_pi_TJF} \eqref{fact_pi7TJF2}.
     By the stabilization-restriction fiber sequence \eqref{eq_TEJF_stabres}
\begin{align}\label{eq_stabres_TEJF4}
	\TMF[7] \xrightarrow{\zeta \cdot }\TEJF_2 \xrightarrow{\chi(V_{Sp(1)}) \cdot} \TEJF_4,
\end{align}
we regard $\TEJF_4$ as the cofiber of the multiplication by $\zeta$.
\end{fact}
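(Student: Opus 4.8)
The plan is to unwind the first map of the stabilization--restriction fiber sequence \eqref{eq_TEJF_stabres} with $k=2$,
\[
	\TMF[7]\xrightarrow{\tr_e^{Sp(1)}}\TEJF_2\xrightarrow{\chi(V_{Sp(1)})}\TEJF_4\xrightarrow{\res_{Sp(1)}^e}\TMF[8],
\]
and to recognize the first arrow as multiplication by $\zeta$. Being $\TMF$-linear with source the invertible $\TMF$-module $\TMF[7]=\Sigma^7\TMF$, the map $\tr_e^{Sp(1)}$ is determined by the element $\tr_e^{Sp(1)}(1)\in\pi_7\TEJF_2$ onto which it sends the canonical generator, and it is precisely multiplication by that element; hence the second assertion \eqref{eq_stabres_TEJF4} is a formal consequence of the first, and the statement reduces to proving $\tr_e^{Sp(1)}(1)=\zeta$ inside $\pi_7\TEJF_2\cong\Z/12$.

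First I would make $\pi_7\TEJF_2$ and the restriction explicit. Using $\TEJF_2\simeq\TMF/\nu$, the defining cofiber sequence $\Sigma^3\TMF\xrightarrow{\nu}\TMF\to\TMF/\nu\to\Sigma^4\TMF$, the vanishing $\pi_7\TMF=0$, and the fact that multiplication by $\nu$ carries $\pi_3\TMF\cong\Z/24$ onto $\pi_6\TMF\cong\Z/2$, one sees that $\res_{Sp(1)}^e\colon\pi_7\TEJF_2\to\pi_3\TMF$ is the connecting homomorphism and is injective with image the order-$12$ subgroup $\langle 2\nu\rangle$. Next, by Fact~\ref{fact_pi_TJF}\eqref{fact_pi7TJF2} together with the compatibility of the isomorphism $\res_{Sp(1)}^{U(1)}\colon\TEJF_2\simeq\TJF_2$ of \eqref{eq_res_Sp_U} with the further restriction to the trivial group, $\zeta$ is exactly the unique element of $\pi_7\TEJF_2$ with $\res_{Sp(1)}^e(\zeta)=2\nu$. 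It therefore suffices to show $\res_{Sp(1)}^e\bigl(\tr_e^{Sp(1)}(1)\bigr)=2\nu$.

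To get at $\tr_e^{Sp(1)}(1)$ I would pass to the $\TMF$-cell level. By Fact~\ref{fact_cellstr_TEJF} (concretely, the commutative diagram analogous to \eqref{eq_attaching_P} asserted there), the $k=2$ instance of \eqref{eq_TEJF_stabres} is obtained by applying $\TMF\otimes(-)$ to the cofiber sequence of spectra
\[
	\Sigma^{-4}\Sigma^\infty\HP^2\hookrightarrow\Sigma^{-4}\Sigma^\infty\HP^3\to\Sigma^{-4}\Sigma^\infty S^{12}=S^8
\]
induced by the cell inclusion $\HP^2\hookrightarrow\HP^3$. Hence $\tr_e^{Sp(1)}\colon\TMF[7]\to\TEJF_2$ is $\TMF\otimes(-)$ applied to the $\Sigma^{-4}$-desuspension of the attaching map of the top cell of $\HP^3$; equivalently, $\tr_e^{Sp(1)}(1)$ is the attaching map of the top cell in the cell structure $\TEJF_4=\TMF\otimes\Sigma^{-4}\Sigma^\infty\HP^3$. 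The explicit form \eqref{eq_TEJF4} records that this attaching map collapses onto the $4$-cell to $2\nu$, i.e.\ $\res_{Sp(1)}^e\bigl(\tr_e^{Sp(1)}(1)\bigr)=2\nu$. Combined with the second paragraph, this yields $\tr_e^{Sp(1)}(1)=\zeta$, and \eqref{eq_stabres_TEJF4} follows at once.

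The step I expect to be the main obstacle is the compatibility used in the third paragraph: that the equivariant fiber sequence \eqref{eq_TEJF_stabres} genuinely matches, under the identification of Fact~\ref{fact_cellstr_TEJF}, the cellular cofiber sequence of $\HP^2\hookrightarrow\HP^3$, since reading ``$2\nu$'' off \eqref{eq_TEJF4} rests on it. If one prefers not to invoke that diagram, the alternative is to compute $\res_{Sp(1)}^e\circ\tr_e^{Sp(1)}$ directly, which amounts to the classical computation of the attaching map of the $12$-cell of $\HP^3$ (its image in $\pi_{11}S^8\cong\Z/24$ being $\pm 2\nu$); one would then still need to fix the sign against the normalization of $\zeta$ in Fact~\ref{fact_pi_TJF}.
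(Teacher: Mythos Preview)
Your proposal is correct and follows exactly the approach the paper indicates: the paper's entire ``proof'' is the phrase ``by the cell structure \eqref{eq_TEJF4}'', and you have unpacked that one-line justification in full detail. Your verification that $\res_{Sp(1)}^e\colon\pi_7\TEJF_2\to\pi_3\TMF$ is injective with image $\langle 2\nu\rangle$, together with reading off the attaching map from \eqref{eq_TEJF4} via the compatibility in Fact~\ref{fact_cellstr_TEJF}, is precisely the intended argument, and the sign worry you flag at the end is already absorbed by the paper's normalization choices (the cell structure is recorded with attaching map $2\nu$, and $\zeta$ is fixed by $\res(\zeta)=2\nu$).
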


\subsection{$C_n$-equivariant $\TMF$}\label{subsec_prelim_CnTMF}
$\RO(C_n)$-graded $\TMF$ is the focus of this paper. For each $V \in \RO(C_n)$, the $C_n$-equivariant elliptic cohomology functor \eqref{eq_EllG} produces a sheaf
\begin{align}
\cL(-V) \in \QCoh(\cE^{\ori}[n]), 
\end{align}
where we used the identification $\Ell(\bfB C_n) = \cE^\ori [n]$ in \eqref{eq_EllBU(1)=E}, and thus we have
\begin{align}\label{eq_prelim_TMFCn}
	\TMF[V]^{C_n} = \Gamma(\cE^\ori [n], \cL(-V)). 
\end{align}
We also consider the base change to elliptic curves over $\C$. For each $V \in \RO(C_n)$, the complex analytic counterpart of \eqref{eq_prelim_TMFCn} is denoted by
\begin{align}\label{eq_MFCn}
	\MF[V]^{C_n}_\C := \Gamma(\cE_\C[n], \omega^\bullet \otimes \cL_\C(-V)),
\end{align}
and we call elements of this $\MF_\C$-module {\it $V$-twisted $C_n$-equivariant Modular Forms}. 
In the literature, elements in \eqref{eq_MFCn} are described in two equivalent ways:
\begin{itemize}
	\item One is as {\it $\Gamma_1(n)$-Modular Forms with multiplier systems}, or {\it Modular Forms with level-$n$ structures and multiplier systems}. These are holomorphic functions $\phi(q)$ in the upper half-plane with covariance under the transformation by the congruence subgroup $\Gamma_1(n) \subset SL_2(\Z)$. 
	The ``multiplier system'' refers to the constants in the $\Gamma_1(n)$ covariance formula, which indicates the twist by the line bundle $\cL_\C(-V)$.
	\item The other view is as vector-valued Modular Forms, using the identification \begin{align}
		\Gamma(\cE_\C[n]; \cL_\C(-V)) \simeq \Gamma(\moduli_\C; p_! \cL_\C(-V)), 
	\end{align} where $p \colon \cE_\C[n] \to \moduli$ is the $n^2$-fold covering map, $p_! \cL_\C(-V)$ is the fiberwise direct sum, and $p_! \cL_\C(-V)$ is a sheaf of rank $n^2$ over $\moduli$.
\end{itemize}
This paper avoids an explicit formula for multiplier systems or the vector-valued modular forms transition function (see \cite{GrigolettoPutrov}). From the models discussed, the integrality of $C_n$-equivariant vector-valued modular forms is understood by ensuring that the Fourier coefficients of each vector component (in $q = e^{2\pi i \tau}$) are integral.\footnote{In the literature, integral $\Gamma_1(n)$-Modular Forms typically only require the Fourier coefficients at the $i\infty$ cusp to be integral, but not at the other cusps, which are the $SL(2,\Z)$ images of $i\infty$. In other words, integrality is imposed on only one component of the corresponding vector-valued modular form.
}
We denote by $\MF[V]^{C_n}$ the submodule of $\MF[V]^{C_n}_\C$ consisting of {\it integral} $C_n$-equivariant $V$-twisted Modular Forms. 
The $C_n$-equivariant $\TMF$ induces a map 
\begin{align}
	e_{\MF(n)} \colon \pi_\bullet \TMF[V]^{C_n} \to \MF[V]^{C_n}
\end{align}
for each $V \in \RO(C_n)$.\footnote{This fact follows by factoring through the $C_n$-equivariant Tate $K$-theory \cite{Ganter} \cite{Luecke}. }

We use the standard inclusion $C_n \hookrightarrow U(1)$. Through the equivariant elliptic cohomology functor \eqref{eq_Ell_GM}, this corresponds to the inclusion $\iota_n \colon \cE^\ori[n] \hookrightarrow \cE^{\ori}$ of $n$-torsion points. For every $U(1)$-representation $W \in \RO(U(1))$, we have $\iota_n^* \cL(-W) \simeq \cL(-\res_{U(1)}^{C_n} W)$, and the following diagram commutes:
\begin{align}\label{diag_res_U(1)_Cn}
	\xymatrix{
	\TMF[W]^{U(1)} \ar[r]^-{\res_{U(1)}^{C_n}} \ar@{=}[d] & \TMF[\res_{U(1)}^{C_n} W]^{C_n} \ar@{=}[d] \\
	 \Gamma(\cE^\ori; \cL(-W)) \ar[r]^-{\iota_n^*}&  \Gamma(\cE^\ori [n]; \iota_n^* \cL(-W))	  
	}
\end{align}

\subsection{Equivariant sigma orientations}\label{subsec_sigma}

In \cite{ando2010multiplicative}, an $E_\infty$-ring map
\begin{align}
    \MString \to \TMF
\end{align}
was constructed and coined the {\it sigma orientation} of $\TMF$. In particular, they established the Thom isomorphism in $\TMF$-cohomology for vector bundles with string structures. From both a mathematical perspective \cite{lurie2009survey} and a physical perspective \cite[Appendix A]{tachikawa2023anderson}, there is a prevalent expectation that the sigma orientation extends to the genuine equivariant $\TMF$. Such an equivariant orientation would imply the Thom isomorphism statement for $\RO(G)$-graded $\TMF$: Given an element $V \in \RO(G)$ that possesses a string structure, that is, a null homotopy $\mathfrak{s}$ of the composition
\begin{align}
    BG \xrightarrow{\overline{V}} BO \to P^4 BO,
\end{align}
we expect an isomorphism of $G$-equivariant $\TMF$-module spectra
\begin{align}
    \sigma(\mathfrak{s}) \colon \TMF[V]^G \simeq \TMF.
\end{align}
Although the genuine equivariant sigma orientation is not yet fully established, a partial result is available that addresses the requisite groups. For further information, consult \cite[Section 2.3]{lin2024topologicalellipticgenerai}. The $G=U(1)$ case is essential in this paper. To state the result, consider the following:

\begin{prop}\label{prop_U1twist}
	We have a non-split short exact sequence
	\begin{align}\label{eq_U1twist}
		\xymatrix{
			0 \ar[r] &  H^4(BU(1); \Z) \ar@{=}[d] \ar[r] &  [BU(1), P^4 BO] \ar[d]^-{\simeq} \ar[r] &  H^2(BU(1); \Z/2) \ar@{=}[d] \ar[r] & 0 \\
			&\Z \{ c_1^2 \} \ar[r]^-{c_1^2 \mapsto 2} & \Z \ar[r]^-{1 \mapsto c_1} & \Z /2 \{ c_1 \}&
		}. 
	\end{align}
	The generator in the middle term is given by the fundamental representation $\overline{\mu} \colon BU(1) \to BO$ composed with the truncation $BO \to P^4BO$. 
\end{prop}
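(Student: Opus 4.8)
The plan is to compute $[BU(1), P^4BO]$ by a careful study of the Postnikov/Whitehead tower of $BO$ in low degrees and the attaching data of $BU(1) = \mathbb{CP}^\infty$, and then to identify the image of the tautological representation.

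First I would recall that $P^4 BO$ denotes the $4$-th Postnikov truncation (the $3$-connected cover quotient, i.e. $BO$ with homotopy groups above degree $4$ killed), whose only nonvanishing homotopy groups are $\pi_1 = \mathbb{Z}/2$ (from $w_1$), $\pi_2 = \mathbb{Z}/2$ (from $w_2$), and $\pi_4 = \mathbb{Z}$ (from $\tfrac{p_1}{2}$, using that we have passed to the relevant string-theoretic truncation $BO\langle 0\rangle \to P^4 BO$). The key structural input is the fibration sequence
\begin{align}
K(\mathbb{Z},4) \to P^4 BO \to P^2 BO,
\end{align}
together with the identification $P^2 BO \simeq K(\mathbb{Z}/2,1)\times K(\mathbb{Z}/2,2)$ (since the first $k$-invariant $Sq^2\colon K(\mathbb{Z}/2,1)\to K(\mathbb{Z}/2,3)$ vanishes on the relevant class — here I would be careful: the splitting of $P^2BO$ requires $w_1^2$ being the obstruction, which is $Sq^1 w_1$, and one checks this carefully). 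Applying $[BU(1), -]$ to this fibration gives an exact sequence
\begin{align}
[BU(1), K(\mathbb{Z},4)] \to [BU(1), P^4 BO] \to [BU(1), P^2 BO] \xrightarrow{\ k\ } [BU(1), K(\mathbb{Z},5)],
\end{align}
i.e. $H^4(BU(1);\mathbb{Z}) \to [BU(1),P^4BO] \to H^1(BU(1);\mathbb{Z}/2)\oplus H^2(BU(1);\mathbb{Z}/2) \to H^5(BU(1);\mathbb{Z})$.

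Next I would plug in $H^*(BU(1);\mathbb{Z}) = \mathbb{Z}[c_1]$ with $|c_1|=2$ and $H^*(BU(1);\mathbb{Z}/2) = \mathbb{Z}/2[\bar c_1]$: so $H^1(BU(1);\mathbb{Z}/2)=0$, $H^2(BU(1);\mathbb{Z}/2)=\mathbb{Z}/2\{\bar c_1\}$, $H^4(BU(1);\mathbb{Z})=\mathbb{Z}\{c_1^2\}$, and $H^5(BU(1);\mathbb{Z})=0$. The vanishing of $H^1$ and $H^5$ immediately gives the short exact sequence $0 \to \mathbb{Z}\{c_1^2\} \to [BU(1),P^4BO] \to \mathbb{Z}/2\{\bar c_1\} \to 0$, which is the claimed sequence once I show the left map has image of index $2$ (equivalently that the extension is non-split and $[BU(1),P^4BO]\cong\mathbb{Z}$). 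For this I would argue with the tautological class: the map $\overline\mu\colon BU(1)\to BO\to P^4BO$ is a well-defined element, its image in $H^2(BU(1);\mathbb{Z}/2)$ is $w_2(\mu) = \bar c_1$ (nonzero, since a complex line bundle has $w_2 \equiv c_1 \bmod 2$), so $\overline\mu$ maps to the generator on the right. To pin down that it generates the whole group $\mathbb{Z}$, I would compute $2\overline\mu$: it lies in the image of $H^4(BU(1);\mathbb{Z})$, and its value there is $p_1(\mu) = c_1(\mu)^2 = c_1^2$ (using $p_1$ of a complex line bundle; the factor of $2$ matches the "$\tfrac{p_1}{2}$" normalization, so $2\cdot\overline\mu \leftrightarrow c_1^2$ is the generator of the $\mathbb{Z}$ subgroup). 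Hence $\overline\mu$ is a generator of $[BU(1),P^4BO]\cong\mathbb{Z}$ and the sequence is non-split with the stated maps.

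The main obstacle I anticipate is the bookkeeping around \emph{which} truncation and \emph{which} $k$-invariants are actually relevant — i.e. making precise that the "$P^4 BO$" appearing in the twist classification (which should really be read through the fiber of $BO \to P^4 BO$ controlling string structures, cf. the null-homotopy $\mathfrak s$ in the surrounding text) has exactly the three homotopy groups I listed with the $k$-invariant $\beta Sq^2$ (the integral Bockstein of $Sq^2$) connecting $K(\mathbb{Z}/2,2)$ to $K(\mathbb{Z},5)$, and confirming this $k$-invariant restricts to zero on $BU(1)$ (automatic here since $H^5(BU(1);\mathbb{Z})=0$, so this step is free). The only genuinely non-formal verification is the identification of the extension class as non-split, which the explicit computation $2\overline\mu = c_1^2$ settles; everything else is a diagram chase with the cohomology of $\mathbb{CP}^\infty$.
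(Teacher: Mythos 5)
Your proposal is correct, but it settles the key point by a genuinely different route than the paper. Both arguments start from the Whitehead-tower fibration $K(\Z,4)\to P^4BO\to P^2BO$ and the resulting exact sequence; the paper, however, then replaces $BU(1)$ by $\CP^2$ (the inclusion $\CP^2\hookrightarrow BU(1)$ is $5$-connected), computes $[\CP^2,BO]=\widetilde{KO^0}(\CP^2)\simeq\Z$ via the Atiyah--Hirzebruch spectral sequence with generator the tautological line bundle, and uses a five-lemma comparison along $S^2\to\CP^2\to S^4$ (with $BO\to P^4BO$ inducing isomorphisms on $[S^2,-]$ and $[S^4,-]$) to conclude $[BU(1),P^4BO]\simeq[\CP^2,P^4BO]\simeq[\CP^2,BO]\simeq\Z$; the group, its generator $\overline\mu$, and hence the maps in the sequence come out in one stroke. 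You instead stay on $BU(1)$ and determine the extension by characteristic classes: $w_2(\mu)\equiv c_1$ shows $\overline\mu$ surjects onto $H^2(BU(1);\Z/2)$, and $2\overline\mu$ corresponds to $\tfrac{p_1}{2}(2\mu_\R)=p_1(\mu_\R)=c_1^2$, forcing $[BU(1),P^4BO]\cong\Z\{\overline\mu\}$ with $c_1^2\mapsto 2$. The paper's route outsources the extension problem to the classical computation of $\widetilde{KO^0}(\CP^2)$ and needs no normalization of the fiber class; yours requires knowing that the $\pi_4$-class of $P^4BO$ is normalized by $\tfrac{p_1}{2}$ (equivalently, that the generator of $\pi_4 BO$ has $p_1=\pm 2$), but avoids $KO$-theory and the skeletal reduction entirely. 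Both are legitimate.

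Two steps in your write-up are stated more casually than they deserve, though both are easily repaired. (i) Injectivity of $H^4(BU(1);\Z)\to[BU(1),P^4BO]$ does not follow just from $H^1(BU(1);\Z/2)=0$: the preceding term of the Puppe sequence is $[BU(1),\Omega P^2BO]$, which contains $H^0(BU(1);\Z/2)=\Z/2$. The correct one-line argument is that this group is torsion, all maps are homomorphisms (everything here is an infinite loop map), and $H^4(BU(1);\Z)\cong\Z$ is torsion-free, so the image is zero. (ii) The identification of the unique $H^4$-preimage of $2\overline\mu$ with $c_1^2$ should be spelled out: since $w_1(2\mu_\R)=w_2(2\mu_\R)=0$, the class $2\overline\mu$ factors through $B\Spin\to P^4BO$, and this composite factors through the fiber $K(\Z,4)$ via a generator of $H^4(B\Spin;\Z)=\Z\{\tfrac{p_1}{2}\}$ (both maps are isomorphisms on $\pi_4$); as $\tfrac{p_1}{2}(2\mu_\R)=c_1^2$, the claim follows up to an irrelevant sign. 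With these clarifications your argument is complete.
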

\begin{proof}
	Note that the fiber sequence \eqref{eq_U1twist} is induced by the fibration in the Whitehead tower
	\begin{align}
		K(\Z, 4) \to P^4 BO \to P^2 BO.
	\end{align}
	We may replace the domain $BU(1)$ with $\bC P^2$ because the inclusion $\bC P^2 \hookrightarrow BU(1)$ is $5$-connected. The Atiyah-Hirzebruch spectral sequence tells us that $[\bC P^2, BO] = \widetilde{KO^0}(\bC P^2)$ is isomorphic to $\Z$, generated by the tautological line bundle. However, the map between short exact sequences
	\begin{align}
		\xymatrix{
			[S^2, P^4BO] \ar[d]^{\simeq} & [\bC P^2, P^4 BO] \ar[l] \ar[d] & [S^4, P^4BO] \ar[l] \ar[d]^{\simeq} \\
			[S^2, BO]  & [\bC P^2, BO] \ar[l] & [S^4, BO] \ar[l]
		}
	\end{align}
	shows that $[\bC P^2, P^4 BO] \simeq [\bC P^2, BO] \simeq \Z$, so we get the desired conclusion.
\end{proof}

We use Meier's result \cite{MeierDual}. 

\begin{fact}[{Equivariant Thom isomorphism for $U(1)$-equivariant $\TMF$}]\label{fact_sigma_U(1)}
	Given two elements $V, W \in \RO(U(1))$ with \begin{align}\label{eq_fact_sigma_U(1)}
		[\overline{V}] = [\overline{W}] \quad \mbox{in} \quad [BU(1), P^4 BO], 
	\end{align} there is a unique $U(1)$-equivariant $\TMF$-module spectra isomorphism
    \begin{align}
		\TMF[V] \simeq \TMF[W]. 
	\end{align}
\end{fact}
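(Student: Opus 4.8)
The existence and uniqueness are due to Meier~\cite{MeierDual}; the strategy I would follow carries out the comparison at the level of invertible sheaves on $\cE^\ori$ rather than with the global sections directly. First I would pass to sheaves: recall from~\eqref{eq_def_ROGTMF} and~\eqref{diag_res_U(1)_Cn} that $\TMF[V]^{U(1)} = \Gamma(\cE^\ori; \cL(-V))$, where $\cL(-V) := \widetilde{\cEll}_{U(1)}(S^{-V})$. Since $\widetilde{\cEll}_{U(1)}$ is symmetric monoidal (Section~\ref{subsec_TJF}), the assignment $V \mapsto \cL(-V)$ is additive into the Picard group of $\cE^\ori$: one has $\cL(-(V+W)) \simeq \cL(-V) \otimes_{\cO_{\cE^\ori}} \cL(-W)$, and each $\cL(-V)$ is invertible with inverse $\cL(V)$. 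As $\Gamma(\cE^\ori; -)$ is $\TMF$-linear, it suffices to produce an equivalence of sheaves $\cL(-V) \simeq \cL(-W)$; tensoring with the invertible $\cL(W)$ and writing $U := V - W$ (of virtual dimension $0$, since we take $\dim V = \dim W$ — otherwise the two sides differ by a nonzero suspension), this becomes the triviality $\cL(-U) \simeq \cO_{\cE^\ori}$. Moreover $[\overline U] = [\overline V] - [\overline W] = 0$ in $[BU(1), P^4 BO]$, because $V \mapsto [\overline V]$ is a group homomorphism (direct sums of representations go to the $H$-space sum in $BO$).

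Thus the whole statement reduces to: \emph{if $U \in \RO(U(1))$ has $\dim U = 0$ and $[\overline U] = 0$ in $[BU(1), P^4 BO]$, then $\cL(-U) \simeq \cO_{\cE^\ori}$.} By Proposition~\ref{prop_U1twist}, $[\overline U] = 0$ means precisely that $BU(1) \xrightarrow{\overline U} BO \to P^4 BO$ is nullhomotopic, i.e.\ that $\overline U$ carries a string structure in the sense of Section~\ref{subsec_sigma}; and a trivialization of $\cL(-U)$ is exactly a Thom class for $\overline U$ in $U(1)$-equivariant $\TMF$. This is the main obstacle, and it is where I would invoke the genuine $U(1)$-equivariant $\sigma$-orientation: the partial equivariant $\sigma$-orientation discussed in Section~\ref{subsec_sigma}, which is not known for general $G$ but is available for $G = U(1)$ by~\cite{MeierDual}, supplies exactly such a Thom class.

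A harder alternative, avoiding the orientation, is to verify $\cL(-U) \simeq \cO_{\cE^\ori}$ by hand: express the sheaves attached to representations pulled back along the power maps $U(1) \xrightarrow{(-)^j} U(1)$ via the multiplication-by-$j$ self-maps of $\cE^\ori$, check on the underlying curve $\cE_\C$ by a degree and Abel--Jacobi count that the isomorphism class depends only on $[\overline U]$, and then lift to $\cE^\ori$ by controlling the relevant descent/obstruction spectral sequence. In any case, granting the Thom class, applying $\Gamma(\cE^\ori; -)$ to $\cL(-V) \simeq \cL(-W) \otimes \cL(-U) \simeq \cL(-W)$ gives the $\TMF$-module equivalence $\TMF[V]^{U(1)} \simeq \TMF[W]^{U(1)}$, and every step other than the orientation input is formal.
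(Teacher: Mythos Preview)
The paper does not actually prove this statement: it is recorded as a \emph{Fact} attributed to Meier~\cite{MeierDual}, with no argument supplied beyond the citation. Your outline---pass to invertible sheaves on $\cE^\ori$, reduce to trivializing $\cL(-U)$ for $U = V-W$ with $[\overline U]=0$, and invoke the genuine $U(1)$-equivariant sigma orientation---is essentially the content being cited, so your approach agrees with the paper's (implicit) one.

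Two small points. First, the paper does add one argument you omit: uniqueness. In the Remark immediately following the Fact, the paper observes that $[\overline{V-W}]=0$ means $V-W$ carries a $BU\langle 6\rangle$-structure, and since $[BU(1), \Omega BU\langle 6\rangle] = 0$ this structure is unique up to homotopy; hence the resulting Thom isomorphism is uniquely determined. You attribute uniqueness to Meier without explaining it, whereas this short argument is worth including. Second, in your final sentence you conclude $\TMF[V]^{U(1)} \simeq \TMF[W]^{U(1)}$, but the Fact asserts the stronger statement of an equivalence of genuine $U(1)$-equivariant $\TMF$-module spectra $\TMF[V]\simeq\TMF[W]$, not merely of their fixed points. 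Your sheaf-level isomorphism $\cL(-V)\simeq\cL(-W)$ in fact yields the equivariant equivalence (via the representing construction~\eqref{eq_def_TMFcoh}), so this is only a slip in the last line rather than a gap in the argument.
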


\begin{rem}
	The uniqueness in Fact~\ref{fact_sigma_U(1)} follows because \eqref{eq_fact_sigma_U(1)} shows that $V-W$ has a $BU\langle 6 \rangle$-structure, and the choice of $BU \langle 6 \rangle$-structure is unique up to homotopy as seen from $[BU(1), \Omega BU \langle 6 \rangle] = 0$. 
\end{rem}
Let
\begin{align}
	\varphi_n := (-)^n \colon U(1) \to U(1)
\end{align}
be the $n$-th power group homomorphism. When $n$ is positive, the kernel of $\varphi_n$ is the $n$-th cyclic group $C_n \subset U(1)$. 
\begin{lem}\label{lem_twistn2}
	The homomorphism $\varphi_n$ induces a homomorphism (see Proposition \ref{prop_U1twist})
	\begin{align}
		\res_{\varphi_n} = n^2 \cdot \colon [BU(1), P^4 BO] \simeq \Z \to [BU(1), P^4 BO] \simeq \Z. 
	\end{align}
\end{lem}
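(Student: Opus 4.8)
The plan is to use the naturality of the short exact sequence \eqref{eq_U1twist} of Proposition~\ref{prop_U1twist}. That sequence is induced by the fibration $K(\Z,4) \to P^4BO \to P^2BO$ in the Whitehead tower of $BO$, hence is natural in the source space. Precomposing with $B\varphi_n \colon BU(1) \to BU(1)$ therefore yields a morphism of short exact sequences whose outer vertical maps are $(B\varphi_n)^*$ on $H^4(BU(1);\Z) = \Z\{c_1^2\}$ and on $H^2(BU(1);\Z/2) = \Z/2\{c_1\}$, and whose middle vertical map is the desired $\res_{\varphi_n}$ on $[BU(1),P^4BO] \simeq \Z$.

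First I would identify $(B\varphi_n)^*$ on cohomology. Since $U(1) \simeq K(\Z,1)$ and $\varphi_n$ is its degree-$n$ self-map, $B\varphi_n$ is the degree-$n$ self-map of $BU(1) \simeq K(\Z,2)$; thus it acts by multiplication by $n$ on $H^2(BU(1);\Z)$, by $n \bmod 2$ on $H^2(BU(1);\Z/2)$, and — since $H^4(BU(1);\Z)$ is generated by $c_1^2$ — by $n^2$ on $H^4(BU(1);\Z)$.

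Next I would run this through the morphism of short exact sequences. Commutativity of the left-hand square says that $\res_{\varphi_n}$, restricted to the image of $H^4(BU(1);\Z)$, agrees with $(B\varphi_n)^*$ there; but by \eqref{eq_U1twist} that image is the subgroup $2\Z \subset \Z$ with $c_1^2$ corresponding to $2$, so $\res_{\varphi_n}(2) = 2n^2$. Since $\res_{\varphi_n}$ is an endomorphism of $\Z$, it is multiplication by the integer $\res_{\varphi_n}(1)$, and $2\res_{\varphi_n}(1) = 2n^2$ forces $\res_{\varphi_n}(1) = n^2$, which is the claim. As an alternative to the last step, one can observe that $P^4BO$ is rationally $K(\mathbb{Q},4)$, so the natural map $[BU(1),P^4BO] \to H^4(BU(1);\mathbb{Q})$ is injective (its source being torsion-free), and then read the factor $n^2$ off the $H^4$ computation directly.

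The only point needing care — and what I would regard as the main, if mild, obstacle — is the bookkeeping forced by \eqref{eq_U1twist} being non-split: one cannot simply diagonalize $\res_{\varphi_n}$ against the two outer maps $n^2$ and $n \bmod 2$, but must combine the $H^4$-level computation with the torsion-freeness (equivalently, the rational injectivity) of $[BU(1),P^4BO]$ to pin down the integer. Everything else is routine.
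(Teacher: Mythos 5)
Your argument is correct and is essentially the paper's own proof: both rest on Proposition~\ref{prop_U1twist} together with $\varphi_n^* c_1 = n\,c_1$, hence $\varphi_n^* = n^2$ on $H^4(BU(1);\Z)$, propagated through the naturality of the sequence \eqref{eq_U1twist}. The only difference is that you spell out the final bookkeeping (that an endomorphism of $\Z$ sending $2 \mapsto 2n^2$ must be multiplication by $n^2$), which the paper leaves implicit.
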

\begin{proof}
	This follows from Proposition \ref{prop_U1twist} and the fact that $\varphi_n^* c_1 = n \cdot c_1$ in $H^2(BU(1); \Z) \simeq \Z[c_1]$. Thus 
    \begin{align}
		\varphi_n^* = n^2 \cdot \colon  H^4(BU(1); \Z) \to H^4(BU(1); \Z). 
	\end{align}
\end{proof}
This lemma implies that we have a canonical isomorphism for any pair of integers $n$ and $k$
\begin{align}
	  \TMF[\mu^n + k\mu]^{U(1)} \simeq \TMF[(n^2+k) \mu -2n^2+2]^{U(1)} \simeq \TJF_{n^2+k}[-2n^2 + 2], \label{eq_sigma_prelim}
\end{align}
where $\mu^n:= \res_{\varphi_n} \mu$ for shorthand.

Fact~\ref{fact_sigma_U(1)} implies the Thom isomorphism for $G=C_p$ at each prime $p$. Let $\rho_n :=  \res_{U(1)}^{C_n} \mu \in \RO(C_n)$ represent the class $[\overline{\rho_n}] \in [BC_n, P^4 BO]$. The abelian group structure of $[BC_n, P^4 BO]$ and the class $[\overline{\rho_n}]$ are examined in \cite{GrigolettoPutrov}. Notably, if $n=p$ is prime, we have:
\begin{itemize}
	\item For $p=2$, the fundamental real $1$-dimensional representation $\lambda \in \mathrm{Rep}_O(C_2)$ generates the group $[BC_2, P^4 BO] \simeq \Z/8$, and the element $[\overline{\rho_2}]$ is twice the generator,  
	\begin{align}\label{eq_C2_twist}
		[\overline{\rho_2}] = 2 [\overline{\lambda }] \in [BC_2, P^4 BO] \simeq \Z/8 \{[\overline{\lambda}] \}. 
	\end{align}
	\item For any odd prime $p>2$, the element $[\overline{\rho_p}]$ generates the group $[BC_p, P^4 BO] \simeq \Z/p$,
	\begin{align}\label{eq_Cp_twist}
		[BC_p, P^4 BO] = \Z / p \{ [\overline{\rho_p}] \}. 
	\end{align}
\end{itemize}

The following proposition confirms that the periodicity of the $\RO(C_p)$-grading of $\TMF$ aligns with \eqref{eq_C2_twist} and \eqref{eq_Cp_twist}:
\begin{prop}[{Periodicity of the $\RO(C_p)$-graded $\TMF$}]\label{prop_sigma_Cp}~
	\begin{enumerate}
		\item We have an isomorphism of $C_2$-equivariant $\TMF$-module spectra, 
		\begin{align}
			\TMF[8\overline{\lambda}] = \TMF[4\overline{\rho_2}] \stackrel{\sigma}{\simeq} \TMF. 
		\end{align}
		\item For each odd prime $p$, we have an isomorphism of $C_p$-equivariant $\TMF$-module spectra, 
		\begin{align}
		 \TMF[p\overline{\rho_p}] \stackrel{\sigma}{\simeq} \TMF. 
		\end{align}
	\end{enumerate}
\end{prop}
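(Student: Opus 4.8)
The plan is to deduce both statements from the $U(1)$-equivariant Thom isomorphism (Fact~\ref{fact_sigma_U(1)}) by transporting along the inclusion $C_p \hookrightarrow U(1)$. Set $N := 4$ when $p = 2$ and $N := p$ when $p$ is odd, so that, by \eqref{eq_C2_twist} and \eqref{eq_Cp_twist}, $N\overline{\rho_p}$ is exactly the class that vanishes in $[BC_p, P^4 BO]$ (and recall $4\overline{\rho_2} = 8\overline{\lambda}$). Suppose we can produce a virtual representation $W \in \RO(U(1))$ with (i) $\res_{U(1)}^{C_p} W \simeq N\rho_p$ as an \emph{honest} $C_p$-representation — hence automatically $\dim W = 2N$ — and (ii) $[\overline{W}] = 0$ in $[BU(1), P^4 BO] \simeq \Z$ (Proposition~\ref{prop_U1twist}). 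Applying Fact~\ref{fact_sigma_U(1)} to the pair $(W, \underline{\R}^{2N})$, whose classes $[\overline{(-)}]$ both vanish, yields an equivalence of $U(1)$-equivariant $\TMF$-modules $\TMF[W] \simeq \Sigma^{2N}\TMF$. Restricting along $C_p \hookrightarrow U(1)$ — a symmetric monoidal functor $\Sp^{U(1)} \to \Sp^{C_p}$ that carries $S^W$ to $S^{\res W} = S^{N\rho_p}$ and, by Gepner--Meier functoriality, carries $\TMF$ to $\TMF$ — produces $\TMF[N\rho_p]^{C_p} \simeq \Sigma^{2N}\TMF^{C_p}$, and therefore $\TMF[N\overline{\rho_p}]^{C_p} = \Sigma^{-2N}\TMF[N\rho_p]^{C_p} \simeq \TMF^{C_p}$, which is the assertion.

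So the real content is the construction of $W$, which I would assemble from the power representations $\mu^k := \res_{\varphi_k}\mu$. Two inputs are needed: by Lemma~\ref{lem_twistn2} (together with $[\overline{\mu}] = 1$ from Proposition~\ref{prop_U1twist}), one has $[\overline{\mu^k}] = k^2$ in $[BU(1), P^4 BO] \simeq \Z$; and $\res_{U(1)}^{C_p}\mu^k$ equals $\rho_p$ whenever $k \equiv \pm 1 \pmod p$ and equals the trivial $2$-dimensional representation whenever $p \mid k$. For odd $p$, since $\gcd(p-2, p+2) = \gcd(p-2, 4) = 1$, I pick $s, t \in \Z$ with $s(p-2) + t(p+2) = -1$ and put
\[
  W := (p - s - t)\,\mu + s\,\mu^{p-1} + t\,\mu^{p+1}.
\]
Then $\res_{U(1)}^{C_p} W = \bigl((p - s - t) + s + t\bigr)\rho_p = p\rho_p$, and, using $(p \pm 1)^2 - 1 = p(p \pm 2)$,
\[
  [\overline{W}] = (p - s - t) + s(p-1)^2 + t(p+1)^2 = p\bigl(1 + s(p-2) + t(p+2)\bigr) = 0.
\]
For $p = 2$ I would take instead $W := 4\mu - \mu^2 + 2\,\underline{\R}$: here $\mu^2$ restricts to the trivial $2$-dimensional $C_2$-representation, so $\res_{U(1)}^{C_2} W = 4\rho_2$, while $\dim W = 8$ and $[\overline{W}] = 4\cdot 1 - 2^2 = 0$.

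The one substantive point — the main obstacle — is the insistence in (i) that $\res_{U(1)}^{C_p} W$ be \emph{exactly} $N\rho_p$ with no spurious trivial summand, for otherwise the argument only computes $\TMF[N\overline{\rho_p}]$ up to a nontrivial suspension; this is what forces the coefficient bookkeeping above. In particular, for $p = 2$ one cannot simply restrict a twist built from odd powers of $\mu$: writing such a $W = \sum_j a_j\,\mu^{2j+1}$ with $\res W = 4\rho_2$ forces $\sum_j a_j = 4$, and then $[\overline{W}] = \sum_j a_j(2j+1)^2$ would be $\equiv \sum_j a_j = 4 \pmod 8$ (odd squares being $\equiv 1 \pmod 8$), which never vanishes — hence the correction term $-\mu^2 + 2\,\underline{\R}$, which has vanishing virtual dimension and trivial restriction but twist $\equiv 4 \pmod 8$. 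Everything else is formal: symmetric monoidality of restriction, Gepner--Meier functoriality of $\TMF^{(-)}$, and the identity $4\overline{\rho_2} = 8\overline{\lambda}$ in $\RO(C_2)$.
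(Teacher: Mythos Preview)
Your proof is correct and follows the same strategy as the paper: manufacture a $U(1)$-equivariant Thom isomorphism from Fact~\ref{fact_sigma_U(1)} and restrict along $C_p \hookrightarrow U(1)$. For $p=2$ your $W = 4\mu - \mu^2 + 2\underline{\R}$ is exactly the paper's choice, which simply restricts the equivalence $\TMF[\overline{\mu^2}] \simeq \TMF[4\overline{\mu}]$ (the case $n=2$ of \eqref{eq_proof_Cp_periodicity}) to $C_2$. For odd $p$ the paper sidesteps your B\'ezout computation with a tidier arithmetic trick: writing $p = \left(\tfrac{p+1}{2}\right)^2 - \left(\tfrac{p-1}{2}\right)^2$ and observing that $\res_{U(1)}^{C_p}\mu^{(p+1)/2} \simeq \res_{U(1)}^{C_p}\mu^{(p-1)/2}$ in $\RO(C_p)$ (since $(p+1)/2 \equiv -(p-1)/2 \pmod p$), one restricts the combined equivalence $\TMF\bigl[\overline{\mu^{(p+1)/2}} - \overline{\mu^{(p-1)/2}}\bigr] \simeq \TMF[p\,\overline{\mu}]$ directly to obtain $\TMF \simeq \TMF[p\,\overline{\rho_p}]$. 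Your version is a touch more laborious but has the minor virtue of making the general recipe---find any $W \in \RO(U(1))$ with prescribed restriction and vanishing twist class---explicit.
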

\begin{proof}
	Consider the isomorphism of $U(1)$-equivariant $\TMF$-module spectra from Lemma \ref{lem_twistn2} and Fact \ref{fact_sigma_U(1)}:
	\begin{align}\label{eq_proof_Cp_periodicity}
		\TMF[\overline{\mu^n}] \stackrel{\sigma}{\simeq} \TMF[n^2\overline{\mu}]. 
	\end{align}
 (1) is derived by applying the functor $\res_{U(1)}^{C_2} \colon \Spectra^{U(1)} \to \Spectra^{C_2}$ to \eqref{eq_proof_Cp_periodicity} for $n=2$. For (2), we have \begin{align}\label{eq_proof_Cp_periodicity2}
		\res_{U(1)}^{C_p} \mu^{n} \simeq \res_{U(1)}^{C_p} \mu^{p-n} \quad \mbox{in } \RO(C_p) 
	\end{align} as real representations for any $n \in \Z$. This leads to an equivalence of $C_p$-equivariant $\TMF$-module spectra, \begin{align}
		\TMF[p\overline{\rho_p}] = \TMF\left[ \left( \frac{p+1}{2} \right)^2\overline{\rho_p} -  \left( \frac{p-1}{2} \right)^2\overline{\rho_p} \right] 
	\stackrel{\sigma}{\simeq} \TMF\left[ 	\res_{U(1)}^{C_p} \left( \overline{\mu^{\frac{p+1}{2} } }- \overline{\mu^{\frac{p-1}{2}}} \right)  \right] \simeq \TMF, 
	\end{align} where the middle isomorphism is the specialization \eqref{eq_proof_Cp_periodicity} to $n=p$, and the right isomorphism is \eqref{eq_proof_Cp_periodicity2}.
\end{proof}

\section{General strategy : setting up fiber sequences}\label{sec_general}

\subsection{Untwisted cases}\label{subsec_general_nontwisted}

Consider the following diagram of pointed $U(1)$-spaces:
\begin{align}\label{seq_sigma^n}
	\xymatrix{
	& S^\mu \ar[rd]^-{\varphi_n} & \\
	\Ind_{C_n}^{U(1)}(S^0) \simeq (U(1) / (C_n))_+ \ar[r] & S^0 \ar[u]^-{\chi(\mu)} \ar[r]_-{\chi(\mu^n)} & S^{\mu^n}, 
	}
\end{align} 
The notation $\chi(-)$ is in \eqref{eq_notation_chi}. Here, $\mu$ denotes the fundamental representation in $\RO(U(1))$, and $\mu^n := \res_{\varphi_n} \mu$. The triangle is commutative, and the horizontal sequence forms a cofiber sequence of pointed $U(1)$-spaces.
$U(1)$-equivariant $\TMF$-homology $(\TMF \otimes -)^{U(1)}$ yields the following fiber sequence in $\Mod_\TMF$:
\begin{align}\label{eq_CnU1}
    (\TMF \otimes \Ind_{C_n}^{U(1)}(S^0))^{U(1)} \xrightarrow{\tr_{C_n}^{U(1)}} \TMF^{U(1)} \xrightarrow{\chi(\mu^n)} \TMF[\mu^n]^{U(1)}.
\end{align}
By the isomorphisms
\begin{align}
    (\TMF \otimes \Ind_{C_n}^{U(1)}(S^0))^{U(1)} &\simeq  \TMF[1]^{C_n}, \\
    \TMF^{U(1)} &=\TJF_0,
\end{align}
and \eqref{eq_sigma_prelim}, the fiber sequence \eqref{eq_CnU1} can be reformulated as
\begin{align}\label{eq_fundamental}
	\TMF[1]^{C_n} \xrightarrow{\tr_{C_n}^{U(1)}}  \TJF_0 \xrightarrow{\chi(\mu^n)} \TJF_{n^2}[-2n^2 + 2] \xrightarrow{\res_{U(1)}^{C_n}} \TMF[2]^{C_n}. 
\end{align}

\begin{prop}\label{prop_general_nontwist}
We have the following commutative diagram of $\TMF$-modules:

\begin{align}
    \xymatrix{
    \TMF^{C_n}/\tr_{e}^{C_n}[1] \ar[rr]\ar@/^10pt/[dd] && \TJF_1  \ar[r]^-{\res_{\varphi_n}} & \TJF_{n^2}[-2n^2+2] \ar@{=}[dd] \\ 
    &\TMF \ar[ru]^-{\simeq} \ar[rd]^-{\res_{e}^{U(1)}} &&\\
    \TMF^{C_n}[1] \ar@/^10pt/[d]^-{\tr_{C_n}^e} \ar[uu] \ar[rr]^-{\tr_{C_n}^{U(1)}} && \TJF_0 \ar@/^10pt/[d]^-{\tr_{U(1)}^e} \ar[r]^-{\chi(\mu^n)}  \ar[uu]_-{\chi(\mu) \cdot} &\TJF_{n^2}[-2n^2+2]
     \\
\TMF[1] \ar@{=}[rr] \ar[u]^-{\tr_e^{C_n}} &&  \TMF[1], \ar[u]^-{\tr_{e}^{U(1)}} & 
    }
\end{align}
where the horizontal sequences are fiber sequences, the vertical sequences are split fiber sequences, 
and the middle row is the fiber sequence in \eqref{eq_fundamental}. 
If we define
\begin{align}
    \widetilde{\TMF^{C_n}} := \cofib\left(\TMF \xrightarrow{\tr_e^{C_n}} \TMF^{C_n}\right), 
\end{align}
then the fiber sequence 
\begin{align}\label{seq_tildeTMFCn}
    \TMF \xrightarrow{\tr_e^{C_n}} \TMF^{C_n} \to \widetilde{\TMF^{C_n}}
\end{align}
splits, providing a canonical isomorphism 
\begin{align}\label{eq_TMFCn_fib}
    \widetilde{\TMF^{C_n}}[1] \simeq  \mathrm{fib}\left( \TMF \xrightarrow{\res_e^{U(1)}} \TJF_0  \xrightarrow{ \chi(\mu) } \TJF_1 \xrightarrow{\res_{\varphi_n} } \TJF_{n^2}[-2n^2+2] \right).
\end{align}

\end{prop}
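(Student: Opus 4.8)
The plan is to obtain the entire diagram by applying the exact functor $(\TMF\otimes-)^{U(1)}$ to one commutative diagram of pointed $U(1)$-spaces and then substituting the known descriptions of $\TJF_0$ and $\TJF_1$. Concretely, I would first enlarge \eqref{seq_sigma^n} to a commutative square of pointed $U(1)$-spaces whose two rows are cofiber sequences: the top row is \eqref{eq_chi_sigma}, $U(1)_+ = \Ind_e^{U(1)}S^0 \to S^0 \xrightarrow{\chi(\mu)} S^\mu$; the bottom row is the horizontal cofiber sequence $\Ind_{C_n}^{U(1)}S^0 \to S^0 \xrightarrow{\chi(\mu^n)} S^{\mu^n}$ of \eqref{seq_sigma^n}; the left vertical arrow is the collapse $U(1)_+ \to (U(1)/C_n)_+$, that is, $\Ind_{C_n}^{U(1)}$ applied to the augmentation $(C_n)_+ \to S^0$; the middle is $\id_{S^0}$; and the right is $\varphi_n\colon S^\mu \to S^{\mu^n}$. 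Commutativity of the square is the commuting triangle of \eqref{seq_sigma^n} together with naturality of cofibers. Applying $(\TMF\otimes-)^{U(1)}$, the bottom row becomes \eqref{eq_fundamental} (compare \eqref{eq_CnU1}), the top row becomes the stabilization--restriction fiber sequence \eqref{eq_TJF_stabres} for $k=1$, the left vertical arrow becomes the transfer $\tr_e^{C_n}$ (identifying the map induced by collapsing induced cells with a transfer, up to its Wirthm\"uller suspension $S^1$), the middle becomes $\id_{\TJF_0}$, and the right becomes the map $\res_{\varphi_n}\colon\TJF_1 = \TMF[\mu]^{U(1)}\to\TMF[\mu^n]^{U(1)}$ induced by $\varphi_n$, with target identified with $\TJF_{n^2}[-2n^2+2]$ via \eqref{eq_sigma_prelim}. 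This produces the commuting square of fiber sequences forming the two middle rows of the claimed diagram.

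Next I fill in the decompositions. By \eqref{eq_TJF_0}, $\TJF_0 \simeq \TMF\oplus\TMF[1]$ with $\tr_e^{U(1)}$ the inclusion of the $\TMF[1]$ summand and $\res_e^{U(1)}$ the inclusion of the $\TMF$ summand, and $\TJF_1\simeq\TMF$ by \eqref{eq_P1}--\eqref{eq_TJF_1}. Because consecutive maps of \eqref{eq_TJF_stabres} compose to zero, $\chi(\mu)\cdot$ factors as $\TJF_0\twoheadrightarrow\cofib(\tr_e^{U(1)})\xrightarrow{\simeq}\TJF_1$, and $\cofib(\tr_e^{U(1)})$ is the complementary summand $\TMF$; hence $\chi(\mu)\cdot\circ\res_e^{U(1)}\colon\TMF\to\TJF_1$ is precisely the equivalence of \eqref{eq_TJF_1} filling the inner triangle. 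For the vertical (split) fiber sequences, I use that $\TMF$ is functorial in the group and that $\TMF^G$ is self-dual up to the $\Ad(G)$-twist (Fact~\ref{fact_dualizability_TMF}), so transfers compose covariantly; the composite of transfers along $e\hookrightarrow C_n\to e$ is then $\tr_{\id_e} = \id_\TMF$, so $\tr_{C_n}^e$ retracts $\tr_e^{C_n}$, and similarly $\tr_{U(1)}^e$ retracts $\tr_e^{U(1)}$ (up to the $\Ad(U(1))$-shift). In particular \eqref{seq_tildeTMFCn} is the cofiber of a split monomorphism, hence splits.

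Finally, to extract \eqref{eq_TMFCn_fib} I compare the two middle-row fiber sequences along the vertical maps $(\tr_e^{C_n},\ \id_{\TJF_0},\ \res_{\varphi_n})$ and pass to the fibers of the vertical maps, which again form a fiber sequence; the middle fiber is $0$, so $\mathrm{fib}\big(\res_{\varphi_n}\colon\TJF_1\to\TJF_{n^2}[-2n^2+2]\big)\simeq\Sigma\,\mathrm{fib}\big(\tr_e^{C_n}\colon\TMF[1]\to\TMF^{C_n}[1]\big) = \widetilde{\TMF^{C_n}}[1]$. Since $\chi(\mu)\cdot\circ\res_e^{U(1)}\colon\TMF\to\TJF_1$ is an equivalence, precomposing $\res_{\varphi_n}$ with it leaves the fiber unchanged, which is exactly the assertion \eqref{eq_TMFCn_fib}. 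The one genuinely non-formal point is the last part of the first step: identifying the map induced by $U(1)_+\to(U(1)/C_n)_+$ with $\tr_e^{C_n}$ (getting the Wirthm\"uller suspension right) and the map induced by $\varphi_n$ with $\res_{\varphi_n}$ under \eqref{eq_sigma_prelim}; once those are in place, everything else is formal manipulation of (co)fiber sequences and of split summands.
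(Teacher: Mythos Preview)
Your proposal is correct and follows essentially the same approach as the paper: the paper's proof simply notes that the middle vertical split fiber sequence comes from \eqref{eq_TJF_0} and \eqref{eq_TJF_1} and declares that ``the rest of the diagram follows automatically,'' while you spell out explicitly how to obtain the commuting square of fiber sequences at the space level before applying $(\TMF\otimes-)^{U(1)}$, and how the splitting and the identification \eqref{eq_TMFCn_fib} fall out. Your acknowledgment that the only non-formal step is identifying the induced map on the left column with $\tr_e^{C_n}$ (and the right with $\res_{\varphi_n}$) is apt; once those are granted, everything else is indeed the formal manipulation the paper alludes to.
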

\begin{proof}
  	The middle vertical split fiber sequence arises from \eqref{eq_TJF_0} and \eqref{eq_TJF_1}.
	The rest of the diagram follows automatically. 
\end{proof}

\begin{prop}\label{prop_a(nz)}
    The image of the unit $1 \in \pi_0 \TMF$ through the composition \eqref{eq_TMFCn_fib} 
    \begin{align}
        \TMF \xrightarrow{\res_e^{U(1)}} \TJF_0  \xrightarrow{ \chi(\mu) } \TJF_1 \xrightarrow{\res_{\varphi_n} } \TJF_{n^2}[-2n^2+2]
    \end{align}
    is equal to the element $\res_{\varphi_n} (\chi(\mu)) = \chi(\mu^n) \in \TJF_{n^2}|_{\deg = 2n^2-2}$. This element satisfies
    \begin{align}
        e_\JF \left(\chi(\mu^n)\right)
        = a(nz) = \frac{\theta_{11}(nz, \tau)}{\eta(\tau)^3}  \in \JF_{n^2}|_{\deg = 2n^2-2}. 
    \end{align}
\end{prop}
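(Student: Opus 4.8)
The plan is to prove the two assertions in turn: the identification of the image of $1$ with $\chi(\mu^n)$ will be formal, while the Fourier-expansion formula will be a computation carried out over the complex-analytic universal curve.

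First I would chase the unit. By \eqref{eq_TJF_1} the composite $\TMF\xrightarrow{\res_e^{U(1)}}\TJF_0\xrightarrow{\chi(\mu)\cdot}\TJF_1$ is the canonical equivalence $\chi(\mu)\colon\TMF\simeq\TJF_1$, which carries the unit to $\chi(\mu)$; the remaining arrow is $\res_{\varphi_n}\colon\TJF_1=\TMF[\mu]^{U(1)}\to\TMF[\mu^n]^{U(1)}\stackrel{\eqref{eq_sigma_prelim}}{\simeq}\TJF_{n^2}[-2n^2+2]$. Since $\chi(V)\colon S^0\to S^V$ is the unique nontrivial equivariant map, restriction along any group homomorphism intertwines Euler classes, so $\res_{\varphi_n}(\chi(\mu))=\chi(\res_{\varphi_n}\mu)=\chi(\mu^n)$ --- this is also the content of the commuting triangle in \eqref{seq_sigma^n}. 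As this element lies in $\pi_0\TMF[\mu^n]^{U(1)}\cong\pi_{2n^2-2}\TJF_{n^2}$, we get a priori $e_\JF(\chi(\mu^n))\in\JF_{n^2}|_{\deg=2n^2-2}$, a Jacobi Form of index $n^2/2$ and weight $-1$.

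For the Fourier-expansion formula I would work over $\cE_\C$ using \eqref{diag_TJF_JF}. The key geometric input is that $\varphi_n$ induces on $\Ell(\bfB U(1))\simeq\cE^\ori$ the multiplication-by-$n$ isogeny $[n]$ (the dual map $\widehat{U(1)}=\Z\to\Z=\widehat{U(1)}$ is multiplication by $n$, and $\Ell(\bfB U(1))=\Hom(\Z,\cE^\ori)\simeq\cE^\ori$), so $\res_{\varphi_n}$ is pullback along $[n]$ on global sections. Since $\chi(\mu)$ is the section $1$ of $\TMF[\mu]^{U(1)}=\Gamma(\cE^\ori;\cO_{\cE^\ori}(e))$ --- whence $e_\JF(\chi(\mu))=a\cdot 1=a$, recovering \eqref{eq_char_a} --- the element $\chi(\mu^n)=[n]^*\chi(\mu)$ is the section $1$ of $[n]^*\cO_{\cE^\ori}(e)=\cO_{\cE^\ori}(\cE^\ori[n])$, i.e.\ over $\C$ the distinguished section of $\cO_{\cE_\C}(\cE_\C[n])$ with divisor of zeros the $n$-torsion $\cE_\C[n]$. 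On the other side, $z\mapsto nz$ multiplies the index by $n^2$ and preserves the weight, so $a(nz)$ is a Jacobi Form of index $n^2/2$ and weight $-1$ with a simple zero at each of the $n^2$ torsion points; under the Looijenga normalization \eqref{eq_ak} it corresponds to the section $a(nz)/a(z)^{n^2}$ of $\cO_{\cE_\C}(n^2e)\otimes\omega^{n^2-1}$, whose divisor of zeros one checks directly is again exactly $\cE_\C[n]$ (the order-$(n^2-1)$ pole of $a(nz)/a(z)^{n^2}$ at $e$ being absorbed by the twist $\cO(n^2e)$). Now the sigma-orientation equivalence of Fact~\ref{fact_sigma_U(1)} (via Lemma~\ref{lem_twistn2}) induces on $\pi_0$-homotopy sheaves over $\C$ a line-bundle isomorphism $\Theta\colon\cO_{\cE_\C}(\cE_\C[n])\simeq\cO_{\cE_\C}(n^2e)\otimes\omega^{n^2-1}$ preserving divisors of zeros of sections; thus $\Theta(1)$ and $a(nz)/a(z)^{n^2}$ are sections of the same line bundle with the same divisor of zeros, hence differ by a global unit, which is a constant $c\in\C^\times$ since global units on the universal curve $\cE_\C$ are constant (a fiberwise-nonvanishing holomorphic function is pulled back from the $j$-line). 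Therefore $e_\JF(\chi(\mu^n))=a^{n^2}\cdot\Theta(1)=c\cdot a(nz)$ by \eqref{diag_TJF_JF}, and $c=1$ follows by restricting to the Tate curve ($q=0$), where $[n]$ is the $n$-th power map $y\mapsto y^n$ and $e_\JF(\chi(\mu))|_{q=0}=a|_{q=0}=y^{1/2}-y^{-1/2}$ by \eqref{eq_notation_a}, so that $e_\JF(\chi(\mu^n))|_{q=0}=y^{n/2}-y^{-n/2}=a(nz)|_{q=0}$.

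The step I expect to be the main obstacle is keeping honest track of the sigma-orientation identification \eqref{eq_sigma_prelim} across the passage to Jacobi Forms: this twist is precisely what makes the answer $a(nz)$ rather than the ``naive'' $a^{n^2}=e_\JF(\chi(\mu))^{n^2}=e_\JF(\chi(n^2\mu))$ --- note $\chi(\mu^n)\neq\chi(n^2\mu)$ although both lie in $\TJF_{n^2}|_{\deg=2n^2-2}$. Comparing divisors of zeros over $\cE_\C$ and then pinning down the scalar on the Tate curve is the route I would take to avoid an explicit description of $\Theta$; alternatively, one can invoke the uniqueness clause in Fact~\ref{fact_sigma_U(1)} together with the integrality and primitivity of $a(nz)$ (its $q^0$-coefficients being $0$ and $\pm1$) to get $c=\pm1$ and then fix the sign.
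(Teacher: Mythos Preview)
Your proof is correct and rests on the same core observation as the paper: $\varphi_n$ induces the multiplication-by-$n$ isogeny on $\cE^\ori$, so restricting $\chi(\mu)$ (whose Jacobi form image is $a(z)$) along $\varphi_n$ yields the Jacobi form $a(nz)$. The paper's proof is a two-line sketch stating exactly this and nothing more.

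Where you go further is in your careful treatment of the sigma-orientation identification \eqref{eq_sigma_prelim}: the paper silently assumes that the edge homomorphism $e_\JF$ intertwines $\res_{\varphi_n}$ with the substitution $z\mapsto nz$ on Jacobi Forms, while you verify this by comparing divisors of zeros over $\cE_\C$ and pinning down the resulting scalar on the Tate curve. This is a legitimate gap in the paper's argument --- your own remark that the sigma-twist is precisely what distinguishes $a(nz)$ from the naive $a(z)^{n^2}$ shows why one should not take this compatibility for granted --- and your divisor-and-Tate-curve argument is a clean way to close it. So the approaches are the same in spirit, but yours supplies the detail the paper omits; the paper's brevity buys readability at the cost of leaving the reader to reconstruct exactly the verification you carry out.
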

\begin{proof}
    This follows from $e_\JF(\chi(\mu)) = a $ as seen in \eqref{eq_char_a} and the fact that the group homomorphism $\varphi_n \colon U(1) \to U(1)$ induces the $n$-fold map of the universal oriented elliptic curve.
\end{proof}

In summary:

\begin{cor}\label{cor_untwisted}
We have an isomorphism of $\TMF$-modules, 
\begin{align}
\TMF^{C_n} \simeq \TMF \oplus \widetilde{\TMF^{C_n}}
\end{align}
with 
\begin{align}
	\widetilde{\TMF^{C_n}} := \cofib\left(\TMF \xrightarrow{\tr_e^{C_n}} \TMF^{C_n}\right) \simeq \mathrm{cofib} \left( \TMF[-2] \xrightarrow{\chi(\mu^n)} \TJF_{n^2}[-2n^2]\right). 
\end{align}

\end{cor}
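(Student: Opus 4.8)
The plan is to read both assertions straight off Proposition~\ref{prop_general_nontwist}, with the only real work being a translation of degree shifts. The splitting $\TMF^{C_n}\simeq \TMF\oplus\widetilde{\TMF^{C_n}}$ is literally the claimed splitting of the fiber sequence~\eqref{seq_tildeTMFCn}, so it remains to rewrite the description~\eqref{eq_TMFCn_fib} of $\widetilde{\TMF^{C_n}}$ in the desired cofiber form.

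First I would use that the composite $\TMF \xrightarrow{\res_e^{U(1)}} \TJF_0 \xrightarrow{\chi(\mu)} \TJF_1$ occurring in~\eqref{eq_TMFCn_fib} is exactly the equivalence $\chi(\mu)\colon \TMF\xrightarrow{\sim}\TJF_1$ of~\eqref{eq_TJF_1} (the arrow labeled $\simeq$ in the diagram of Proposition~\ref{prop_general_nontwist}). Since precomposition with an equivalence does not change the fiber, \eqref{eq_TMFCn_fib} collapses to $\widetilde{\TMF^{C_n}}[1]\simeq \fib\!\left(\TJF_1 \xrightarrow{\res_{\varphi_n}} \TJF_{n^2}[-2n^2+2]\right)$, i.e.\ the fiber-sequence description of the top row of that diagram.

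Next I would transport along $\chi(\mu)\colon \TMF\simeq\TJF_1$ to recognize $\res_{\varphi_n}$ as a multiplication map. Because $\res_{\varphi_n}\colon \TJF_1\to \TJF_{n^2}[-2n^2+2]$ is a $\TMF$-module map out of the unit module $\TJF_1\simeq\TMF$, it is determined by the image of the unit, which by Proposition~\ref{prop_a(nz)} is $\res_{\varphi_n}(\chi(\mu))=\chi(\mu^n)\in\pi_{2n^2-2}\TJF_{n^2}$ (here the shift $[-2n^2+2]$ is the one produced by~\eqref{eq_sigma_prelim}). Hence under this identification the map is multiplication by $\chi(\mu^n)$, so $\widetilde{\TMF^{C_n}}[1]\simeq \fib\!\left(\TMF \xrightarrow{\chi(\mu^n)} \TJF_{n^2}[-2n^2+2]\right)$.

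Finally I would desuspend once and convert the fiber into a cofiber: using $\fib(f)\simeq\cofib(f)[-1]$ for any map $f$, applying $[-1]$ gives $\widetilde{\TMF^{C_n}}\simeq \fib\!\left(\TMF[-1]\xrightarrow{\chi(\mu^n)}\TJF_{n^2}[-2n^2+1]\right)\simeq \cofib\!\left(\TMF[-2]\xrightarrow{\chi(\mu^n)}\TJF_{n^2}[-2n^2]\right)$, which is the stated formula. I do not anticipate a genuine obstacle here; the only thing needing care is the bookkeeping of the $\RO(U(1))$-degree shifts and confirming that the connecting map in the top fiber sequence of Proposition~\ref{prop_general_nontwist} really is $\res_{\varphi_n}$ composed with the equivalence $\chi(\mu)$, rather than some twisted variant — but both facts are already recorded in Proposition~\ref{prop_general_nontwist} and Proposition~\ref{prop_a(nz)}.
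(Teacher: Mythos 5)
Your proposal is correct and follows the same route the paper intends: the splitting is taken verbatim from Proposition~\ref{prop_general_nontwist}, the composite in \eqref{eq_TMFCn_fib} is collapsed via the equivalence $\chi(\mu)\colon \TMF\simeq \TJF_1$ and identified with multiplication by $\chi(\mu^n)$ using Proposition~\ref{prop_a(nz)} (a $\TMF$-module map out of $\TMF$ being determined by the image of the unit), and the degree bookkeeping $\fib(f)\simeq \cofib(f)[-1]$ gives exactly the stated shift $\cofib\bigl(\TMF[-2]\xrightarrow{\chi(\mu^n)}\TJF_{n^2}[-2n^2]\bigr)$. This matches the paper, which records the corollary as an immediate summary of those two propositions.
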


The fiber sequence \eqref{eq_fundamental} is related to operations in Jacobi forms and $C_n$-equivariant modular forms, as the next proposition shows.
\begin{prop}\label{prop_JF_fibsec_untwisted}
	The following diagram commutes:
 \begin{align}
 	\xymatrix@C=5em{
 		\MF^{C_n}|_{\deg = m-1} \ar[r]^-{\sum_{\cE_\C[n]/\moduli_\C}} & \MF|_{\deg = m-1} && \\
 		\pi_m \TMF[1]^{C_n} \ar[u]^-{e_{\MF(n)}} \ar[r]^-{\tr_{C_n}^{U(1)}} & \pi_m  \TMF^{U(1)} \ar[u]_-{e_\MF \circ \tr_{U(1)}^e} \ar[r]^-{\chi(\mu^n)} \ar[d]_-{e_\MF \circ \res_{U(1)}^e}& \pi_m \TJF_{n^2}[-2n^2 + 2] \ar[d]^-{e_\JF} \ar[r]^-{\res_{U(1)}^{C_n}} & \pi_m \TMF[2]^{C_n}  \ar[d]^-{e_{\MF(n)}}\\
 	& \MF|_{\deg = m} \ar[r]^-{a(nz)\cdot} & \JF_{n^2}|_{\deg = m+2n^2 -2}  \ar[r]^-{\iota_n^*} & \MF^{C_n}|_{\deg = m-2}
 		}
 \end{align}
 The middle row is the long exact sequence from the fiber sequence \eqref{eq_fundamental}. The top horizontal arrow represents the fiberwise sum along the $n^2$-fold covering map $p \colon \cE_\C[n] \to \moduli_\C$, and the bottom right arrow indicates the restriction along the inclusion $\iota_n \colon \cE_\C[n] \hookrightarrow \cE_\C$.
\end{prop}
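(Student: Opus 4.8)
The plan is to decompose the asserted diagram into the three commuting squares governed by the three maps of the fiber sequence \eqref{eq_fundamental} --- the transfer $\tr_{C_n}^{U(1)}$, the Euler class $\chi(\mu^n)$, and the restriction $\res_{U(1)}^{C_n}$ --- and to treat each by recognizing its horizontal arrows as the effect on (derived) global sections of a morphism of quasi-coherent sheaves on $\cE^\ori$, on $\cE^\ori[n]$, or on $\cM^\ori$, invoking the naturality of the edge homomorphisms $e_\MF$, $e_\JF$, $e_{\MF(n)}$ with respect to such morphisms --- these are built from the descent spectral sequence, equivalently factor through ($C_n$-equivariant) Tate $K$-theory, as in the footnotes to \eqref{eq_def_TMFcoh} and in Section \ref{subsec_prelim_CnTMF} --- and comparing with the explicit complex-analytic models: the Looijenga identification \eqref{eq_ak} on the $\TJF$ side and the ``evaluation at $n$-torsion points'' description of $\MF[V]^{C_n}$ from Section \ref{subsec_prelim_CnTMF}. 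The degree shifts along the bottom row are pure bookkeeping: they record the sigma-orientation twist \eqref{eq_sigma_prelim} of Lemma \ref{lem_twistn2} and the fact that $\Res_{U(1)}^{C_n}\mu^n$ is the trivial two-dimensional representation (since $C_n=\ker\varphi_n$), so that $\iota_n^*\cL(-\mu^n)\simeq\cO_{\cE^\ori[n]}$.

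The Euler class square is the most direct. By \eqref{seq_sigma^n} and Proposition \ref{prop_general_nontwist}, the map $\chi(\mu^n)\colon\TJF_0\to\TJF_{n^2}[-2n^2+2]$ is $\TJF_0$-linear, hence multiplication by the element $\chi(\mu^n)\in\pi_{2n^2-2}\TJF_{n^2}$, which is the Euler section of $\cL(-\mu^n)$ --- namely the pullback along the $n$-isogeny $\varphi_n\colon\cE^\ori\to\cE^\ori$ of the canonical inclusion $\cO_{\cE^\ori}\hookrightarrow\cO_{\cE^\ori}(e)$ --- and whose $e_\JF$-image equals $a(nz)$ by Proposition \ref{prop_a(nz)}. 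Combining this with the multiplicativity of $e_\JF$ for the graded ring $\{\TJF_k\}$ and with the identification of $e_\MF\circ\res_{U(1)}^e$ with the index-$0$ edge homomorphism, supplied by \eqref{diag_stabres_JF_TJF} at $k=0$, yields $e_\JF(\chi(\mu^n)\cdot x)=a(nz)\cdot e_\MF(\res_{U(1)}^e x)$, which is the claimed commutativity. The restriction square is essentially the naturality diagram \eqref{diag_res_U(1)_Cn} for $W=\mu^n$: $\res_{U(1)}^{C_n}$ is induced on sheaf sections by $\iota_n\colon\cE^\ori[n]\hookrightarrow\cE^\ori$, and passing to complex-analytic models via \eqref{eq_ak} and the $n$-torsion description it becomes the bottom map $\iota_n^*$, the restriction of a Jacobi Form to the $n$-torsion fibers; the degree shift $m+2n^2-2\mapsto m-2$ is accounted for by the twist bookkeeping above.

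The transfer square is the main obstacle. One must show $e_\MF\circ\tr_{U(1)}^e\circ\tr_{C_n}^{U(1)}=\sum_{\cE_\C[n]/\moduli_\C}\circ\, e_{\MF(n)}$. Composing transfers along $C_n\hookrightarrow U(1)\to e$ identifies $\tr_{U(1)}^e\circ\tr_{C_n}^{U(1)}$ with (a suspension of) $\tr_{C_n}^e$, which by definition is the $\TMF$-dual of the pullback $p^*\colon\TMF\to\TMF^{C_n}$ along the finite flat cover $p\colon\cE^\ori[n]\to\cM^\ori$, taken with respect to the self-duality of $\TMF^{C_n}$ provided by Fact \ref{fact_dualizability_TMF} (self-dual because $\Ad(C_n)=0$) and its evaluation pairing \eqref{eq_coev_duality_TMF}. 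The genuinely non-formal point is that this abstract dual agrees, after base change to $\C$ and on $\pi_0$, with the algebro-geometric trace $p_*\cO_{\cE_\C[n]}\to\cO_{\moduli_\C}$, whose value on global sections is fiberwise summation --- the cover $\cE_\C[n]\to\moduli_\C$ being finite \'etale. I would prove this by observing that $\tr_{C_n}^{U(1)}$ and $\tr_{U(1)}^e$ are induced by the space-level collapse maps $\Ind_{C_n}^{U(1)} S^0\to S^0$ and $U(1)_+\to S^0$ from \eqref{seq_sigma^n} and \eqref{eq_CnU1} (smashed with the $U(1)$-spectrum $\TMF$ and followed by $U(1)$-fixed points), hence compatible with the passage to ($C_n$-equivariant) Tate $K$-theory through which all the edge homomorphisms factor; in Tate $K$-theory the transfer of a finite \'etale cover is the classical pushforward, which on functions is fiberwise summation. (Alternatively one identifies the evaluation pairing \eqref{eq_coev_duality_TMF} for $\TMF^{C_n}$ with the trace pairing of the finite \'etale $\cO_{\moduli_\C}$-algebra $p_*\cO_{\cE_\C[n]}$, so that $D(p^*)$ becomes the trace map; the remaining work is then to pin this multiplicative perfect pairing down to the \emph{standard} trace.) Granting the transfer--trace comparison, the commutativity of the transfer square --- and hence of the whole diagram --- follows from the same naturality of the edge homomorphisms already used for the other two squares.
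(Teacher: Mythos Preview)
Your treatment of the Euler-class square and the restriction square is correct and matches the paper exactly: the paper invokes Proposition~\ref{prop_a(nz)} for the former and the naturality diagram~\eqref{diag_res_U(1)_Cn} for the latter, just as you do.

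For the transfer square you take a genuinely different route. You argue via Fact~\ref{fact_dualizability_TMF}: identify $\tr_{C_n}^e$ as the $\TMF$-linear dual of $p^*$ under the self-duality of $\TMF^{C_n}$, and then try to match this abstract dual with the algebro-geometric trace, either through Tate $K$-theory or by identifying the evaluation pairing~\eqref{eq_coev_duality_TMF} with the trace pairing of the finite \'etale algebra $p_*\cO_{\cE_\C[n]}$. You correctly flag the non-formal step --- pinning down the perfect pairing as the \emph{standard} trace --- as residual work. The paper avoids this entirely by citing \cite[Section~7.4]{LurieElliptic3}: in Lurie's framework the transfer along $C_n\to e$ is \emph{defined} via the elliptic cohomology functor as the counit $p_!p^*\cO_{\cM}\to\cO_{\cM}$ of the adjunction $p_!\dashv p^*$, and for the finite \'etale cover $p\colon\cE_\C[n]\to\moduli_\C$ this counit is precisely fiberwise summation. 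This is a one-line proof once the citation is in place, whereas your duality argument, while conceptually interesting and essentially sound, reconstructs the same identification indirectly and leaves a loose end.
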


\begin{proof}
The top square is commutative because the transfer map along $C_n  \to e$ is, through the elliptic cohomology functor, given by the counit map $p_! p^* \cO_\moduli \to \cO_\cM$ of the adjunction $p_! \dashv p^*$ \cite[Section~7.4]{LurieElliptic3}. 
The bottom left square commutes by Proposition \ref{prop_a(nz)}, and the bottom right by \eqref{diag_res_U(1)_Cn}.
\end{proof}

\subsection{Twisted cases}\label{subsec_twisted}
We note the following fact:
\begin{lem}\label{lem_IndRes}
    Let $H \subset G$ be an inclusion of compact Lie groups. 
    For any $G$-spectra $X$, we have 
    \begin{align}
        \Ind_{H}^G \circ \Res_G^H (X) \simeq (G/H)_+ \wedge X. 
    \end{align}
\end{lem}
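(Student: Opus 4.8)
The plan is to establish the classical projection-formula-type identity $\Ind_H^G \circ \Res_G^H(X) \simeq (G/H)_+ \wedge X$ for genuine $G$-spectra, using the symmetric monoidal structure on $\Sp^G$ and the fact that $\Ind_H^G$ is the left adjoint of $\Res_G^H$.

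First I would recall that, at the level of pointed $G$-spaces, $\Ind_H^G(S^0) = (G/H)_+$, and more generally that $\Res_G^H$ is symmetric monoidal (it is a restriction functor along the inclusion $H \hookrightarrow G$, hence strong symmetric monoidal), so for a $G$-spectrum $X$ and an $H$-spectrum $Y$ one has the projection formula $\Ind_H^G(Y \wedge \Res_G^H X) \simeq \Ind_H^G(Y) \wedge X$ in $\Sp^G$. This is a standard consequence of the adjunction $\Ind_H^G \dashv \Res_G^H$ together with the fact that $\Res_G^H$ is closed symmetric monoidal: one checks that both sides corepresent the same functor $Z \mapsto \Map_G(\Ind_H^G(Y) \wedge X, Z)$ after using the internal hom adjunction and the projection formula at the level of mapping spaces, or one simply cites it as the standard ``Frobenius reciprocity'' / projection formula for equivariant spectra (see e.g. the six-functor formalism for genuine $G$-spectra). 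Taking $Y = S^0 \in \Sp^H$, so that $\Res_G^H X = S^0 \wedge \Res_G^H X$, yields $\Ind_H^G(\Res_G^H X) \simeq \Ind_H^G(S^0) \wedge X = (G/H)_+ \wedge X$, which is exactly the claim.

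Concretely, the key steps are: (i) note $\Ind_H^G$ is left adjoint to the strong symmetric monoidal functor $\Res_G^H$; (ii) invoke the projection formula $\Ind_H^G(Y \wedge \Res_G^H X) \simeq \Ind_H^G(Y) \wedge X$; (iii) specialize $Y = S^0$; (iv) identify $\Ind_H^G(S^0) = \Sigma^\infty_+(G/H) = (G/H)_+$. Alternatively, and perhaps more self-contained, one can argue space-level: for pointed $H$-spaces the statement $\Ind_H^G(\Res_G^H A) = (G/H)_+ \wedge A$ holds because $G \times_H (\Res_G^H A) \cong (G/H) \times A$ as $G$-spaces via $(g, a) \mapsto (gH, ga)$ (the ``untwisting'' homeomorphism), and then one passes to spectra by observing that $\Ind_H^G$ and $\Res_G^H$ are defined levelwise (or via left Kan extension) and commute with suspension, so the space-level identity propagates.

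The main obstacle is mild: it is simply making sure one is entitled to the projection formula in the genuine equivariant stable category with the correct degree of generality (arbitrary $G$-spectrum $X$, not merely suspension spectra), and to state it at the $\infty$-categorical level rather than relying on a point-set model. I expect this to be routine to cite rather than to prove, since the projection formula for $\Ind \dashv \Res$ is part of the standard package (it is exactly the Wirthmüller-context Frobenius reciprocity), so the ``proof'' in the paper can be a one- or two-sentence reduction to that fact; the only care needed is to record that $\Res_G^H$ is symmetric monoidal so that the hypothesis of the projection formula is met.
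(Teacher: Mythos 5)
Your proposal is correct: the identity is exactly the standard projection formula (Frobenius reciprocity) for $\Ind_H^G \dashv \Res_G^H$ with $Y=S^0$, equivalently the space-level untwisting homeomorphism $G\times_H \Res_G^H A \cong (G/H)\times A$, which is precisely why the paper states this lemma without proof as a known fact. So your argument supplies the standard justification the paper implicitly relies on, and no gap remains beyond citing the projection formula in $\Sp^G$ as you do.
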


Applying Lemma \ref{lem_IndRes}, we obtain the isomorphism of $U(1)$-spectra for each $k \in \Z$:
\begin{align}
    \Ind_{C_n}^{U(1)}(S^{k\rho_n}) \simeq (U(1)/C_n)_+ \wedge S^{k\mu}.  
\end{align}
Then wedging $S^{k\mu}$ to the cofiber sequence \eqref{seq_sigma^n}, we get the following cofiber sequence of $U(1)$-spectra:
\begin{align}\label{seq_cofib_twisted}
    \Ind_{C_n}^{U(1)}(S^{k\rho_n})  \to S^{k\mu} \xrightarrow{\chi(\mu^n)} S^{\mu^n+k\mu}. 
\end{align}
Again applying $U(1)$-equivariant $\TMF$-homology $(\TMF \otimes -)^{U(1)}$ to the fiber sequence \eqref{seq_cofib_twisted}, we get a fiber sequence
\begin{align}
    (\TMF \otimes \Ind_{C_n}^{U(1)}(S^{k\rho_n}))^{U(1)} \to \TJF_k \xrightarrow{\chi(\mu^n)} \TMF[\mu^n + k\mu]^{U(1)}. 
\end{align}
We have
\begin{align}
	(\TMF \otimes \Ind_{C_n}^{U(1)}(S^{k\rho_n}))^{U(1)} &\simeq  \TMF[k\rho_n + 1]^{C_n}, \\
	\TMF[\mu^n + k\mu]^{U(1)} &\simeq \TJF_{k+n^2}[-2n^2+2], 
\end{align} 
where the second equivalence is by \eqref{eq_sigma_prelim}. 
We get

\begin{prop}\label{prop_twisted}
	Let $n$ be a positive integer and $k$ be any integer. 
	We have the following fiber sequence in $\Mod_\TMF$:
	\begin{align}\label{seq_twistedZn_TJF}
		\TMF[k\rho_n + 1]^{C_n} \xrightarrow{\tr_{C_n}^{U(1)}} \TJF_k \xrightarrow{\chi(\mu^n)} \TJF_{k+n^2}[-2n^2+2] \xrightarrow{\res_{U(1)}^{C_n}} \TMF[k\rho_n + 2]^{C_n}. 
	\end{align}
	Moreover, the following diagram commutes:
	\begin{align}\label{eq_stabilization}
		\xymatrix{
		\TMF[k\rho_n + 1]^{C_n} \ar[r] \ar[d]^-{\chi(\rho_n) \cdot} & \TJF_k \ar[r]^-{\chi(\mu^n)} \ar[d]^-{\chi(\mu) \cdot}& \TJF_{k+n^2}[-2n^2+2] \ar[d]^-{\chi(\mu) \cdot} \\
		\TMF[(k+1)\rho_n + 1]^{C_n} \ar[r] & \TJF_{k+1} \ar[r]^-{\chi(\mu^n)} & \TJF_{k+1+n^2}[-2n^2+2]  
		}
	\end{align}
\end{prop}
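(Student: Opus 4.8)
The plan is to deduce both assertions by applying the exact functor $(\TMF \otimes -)^{U(1)} \colon \Sp^{U(1)} \to \Mod_{\TMF}$ to the cofiber sequence \eqref{seq_cofib_twisted}, together with the term identifications already recorded above.

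First I would note that \eqref{seq_cofib_twisted} is the cofiber sequence \eqref{seq_sigma^n} smashed with the representation sphere $S^{k\mu}$, after rewriting $(U(1)/C_n)_+ \wedge S^{k\mu} \simeq \Ind_{C_n}^{U(1)}(S^{k\rho_n})$ via Lemma \ref{lem_IndRes}. Since $\TMF \otimes -$ and genuine $U(1)$-fixed points are both exact, their composite sends \eqref{seq_cofib_twisted} to a fiber sequence in $\Mod_{\TMF}$. I would then identify its terms: the middle term is $(\TMF \otimes S^{k\mu})^{U(1)} = \TJF_k$; the right term is $(\TMF \otimes S^{\mu^n + k\mu})^{U(1)} \simeq \TJF_{k+n^2}[-2n^2+2]$ using the $\sigma$-orientation equivalence \eqref{eq_sigma_prelim}; and the left term is identified by the projection formula together with the change-of-groups (Wirthm\"uller) isomorphism $(\TMF \otimes \Ind_{C_n}^{U(1)}(W))^{U(1)} \simeq (\TMF \otimes W)^{C_n}[1]$ — the shift being $\dim(U(1)/C_n) = 1$ carrying the trivial $C_n$-action — which gives $(\TMF \otimes \Ind_{C_n}^{U(1)}(S^{k\rho_n}))^{U(1)} \simeq \TMF[k\rho_n+1]^{C_n}$. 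Under these identifications the collapse map becomes the transfer $\tr_{C_n}^{U(1)}$ and the second map becomes $\chi(\mu^n)\cdot$, giving the three-term fiber sequence. Rotating it once exhibits the connecting map $\TJF_{k+n^2}[-2n^2+2] \to \Sigma\,\TMF[k\rho_n+1]^{C_n} = \TMF[k\rho_n+2]^{C_n}$, and I would identify it with $\res_{U(1)}^{C_n}$ by naturality from the untwisted case (Proposition \ref{prop_general_nontwist}), since \eqref{seq_cofib_twisted} is obtained from \eqref{seq_sigma^n} by smashing with $S^{k\mu}$ — equivalently, this is an instance of the general identification (cf.\ \cite{lin2024topologicalellipticgenerai}) of the connecting map of a cofiber sequence of the form $(G/H)_+ \to S^0 \xrightarrow{\chi(V)} S^V$ with the restriction along $H \hookrightarrow G$. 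This establishes \eqref{seq_twistedZn_TJF}.

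For the commutative diagram \eqref{eq_stabilization} I would smash \eqref{seq_cofib_twisted} with $\chi(\mu) \colon S^0 \to S^\mu$. This is a map of cofiber sequences of $U(1)$-spectra from index $k$ to index $k+1$: it is $\chi(\mu)$ on $S^{k\mu}$ and on $S^{\mu^n+k\mu}$, and it is $\Ind_{C_n}^{U(1)}(\chi(\rho_n))$ on the induced term, by naturality of induction. Applying $(\TMF \otimes -)^{U(1)}$ and the above (natural) identifications turns the first two columns into multiplication by the Euler class $\chi(\mu)$ on $\TJF_k \to \TJF_{k+1}$ and on $\TJF_{k+n^2}[-2n^2+2] \to \TJF_{k+1+n^2}[-2n^2+2]$ — here using that the $\sigma$-orientation isomorphisms \eqref{eq_sigma_prelim} are compatible with $\chi(\mu)\cdot$ — and the last column into multiplication by $\chi(\rho_n) = \Res_{U(1)}^{C_n}\chi(\mu)$ on $\TMF[k\rho_n+1]^{C_n} \to \TMF[(k+1)\rho_n+1]^{C_n}$. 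Commutativity of \eqref{eq_stabilization} is then automatic from functoriality.

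The step demanding the most care — though requiring no genuinely new input — is the bookkeeping of the change-of-groups isomorphism $(\TMF \otimes \Ind_{C_n}^{U(1)}(-))^{U(1)} \simeq (\TMF \otimes -)^{C_n}[1]$: one must verify that it is natural enough to simultaneously send the collapse map to $\tr_{C_n}^{U(1)}$, the connecting map to $\res_{U(1)}^{C_n}$, and to be compatible with smashing by $\chi(\mu)$. All three compatibilities are already in force in the untwisted Proposition \ref{prop_general_nontwist} and its inputs, so the twisted case reduces, after smashing everything with $S^{k\mu}$, to formal naturality once \eqref{seq_cofib_twisted} is in place.
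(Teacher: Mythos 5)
Your proposal is correct and follows essentially the same route as the paper: smashing the cofiber sequence \eqref{seq_sigma^n} with $S^{k\mu}$, rewriting the induced term via Lemma \ref{lem_IndRes}, applying $(\TMF\otimes-)^{U(1)}$, and identifying the terms through the change-of-groups shift and \eqref{eq_sigma_prelim}, with \eqref{eq_stabilization} obtained by naturality under smashing with $\chi(\mu)$. The compatibility bookkeeping you flag is exactly what the paper implicitly relies on (as in Proposition \ref{prop_general_nontwist}), so there is nothing further to add.
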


We can describe the relation to the corresponding operations on Jacobi Forms and $C_n$-equivariant Modular Forms. 

\begin{prop}
	The following diagram commutes:
	 \begin{align}
		\xymatrix{
		 \pi_m  \TJF_k  \ar[r]^-{\chi(\mu^n)} \ar[d]_-{e_\JF}& \pi_m \TJF_{k+n^2}[-2n^2 + 2] \ar[d]^-{e_\JF} \ar[r]^-{\res_{U(1)}^{C_n}} & \pi_m \TMF[k\rho_n + 2]^{C_n}  \ar[d]^-{e}\\
			 \JF_k|_{\deg = m} \ar[r]^-{a(nz)\cdot} & \JF_{k+n^2}|_{\deg = m+2n^2 -2}  \ar[r]^-{\res_{\cE[n]}} & \MF[k\rho_n]^{C_n}|_{\deg = m-2}
		}
	\end{align}
\end{prop}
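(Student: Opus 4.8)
The plan is to reduce the statement to compatibilities already recorded in the paper: Proposition~\ref{prop_a(nz)} for the Euler class $\chi(\mu^n)$, the commuting square~\eqref{diag_res_U(1)_Cn} for the restriction $\res_{U(1)}^{C_n}$, and the multiplicativity of the edge homomorphism $e_\JF$ implicit in~\eqref{diag_TJF_JF} together with the Looijenga trivializations~\eqref{eq_ak}. I would treat the left and right squares separately, each one matching one of these three inputs.

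For the left square, the first step is to recall that under the identification $\TMF[\mu^n+k\mu]^{U(1)}\simeq\TJF_{k+n^2}[-2n^2+2]$ of~\eqref{eq_sigma_prelim}, the map labeled $\chi(\mu^n)$ is, by the way the fiber sequence~\eqref{seq_twistedZn_TJF} is set up, the graded multiplication in the $\mathbb{E}_2$-ring $\TJF=\bigoplus_{j\in\Z_{\ge 0}}\TJF_j$ by the element $\chi(\mu^n)=\res_{\varphi_n}\chi(\mu)\in\pi_{2n^2-2}\TJF_{n^2}$. The second step is to observe that the edge homomorphisms assemble, over all indices $j$ and all degrees, into a homomorphism of graded rings $\bigoplus_{j}\pi_\bullet\TJF_j\to\bigoplus_{j}\JF_j$; this follows from the multiplicativity of the descent spectral sequence of $\cE^{\ori}$, the fact that base change along $\cE_\C\to\cE$ is a ring map, and the compatibility across $j$ of the trivializations $a^j\cdot$ of~\eqref{eq_ak}. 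Granting these two points, the left square commutes because $e_\JF(\chi(\mu^n)\cdot x)=e_\JF(\chi(\mu^n))\cdot e_\JF(x)=a(nz)\cdot e_\JF(x)$, the last equality being exactly Proposition~\ref{prop_a(nz)}.

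For the right square, I would start from the fact that $\varphi_n$ restricts to the trivial homomorphism on $C_n$, so that $\res_{U(1)}^{C_n}\mu^n$ is trivial of dimension $2$ and hence $\res_{U(1)}^{C_n}(\mu^n+k\mu)=k\rho_n+2$; the restriction in question is then the case $W=\mu^n+k\mu$ of~\eqref{diag_res_U(1)_Cn}, which identifies it with the geometric pullback $\iota_n^*$ along the inclusion of $n$-torsion $\iota_n\colon\cE^{\ori}[n]\hookrightarrow\cE^{\ori}$. Since $\iota_n$ is a morphism over $\cM^{\ori}$, it induces a map of descent spectral sequences, with respect to which $e_\JF$ and $e_{\MF(n)}$ are natural; after base change to $\C$, the induced map on the relevant $E_2^{0,\bullet}$-lines is, essentially by definition, the geometric restriction $\res_{\cE[n]}$ of sections along $\cE_\C[n]\hookrightarrow\cE_\C$. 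This gives the commutativity of the right square.

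The step I expect to be the real obstacle is not the commutativity itself — once the maps are matched on the level of Jacobi and Modular Forms, both squares commute purely by functoriality of the global-section functor and of the descent spectral sequence — but the bookkeeping around the sigma-orientation identification $\TMF[\mu^n+k\mu]^{U(1)}\simeq\TJF_{k+n^2}[-2n^2+2]$. One must check that this isomorphism respects the $\mathbb{E}_2$-ring structure on $\TJF$ and that, on homotopy sheaves base-changed to $\C$, it is realized by multiplication by the meromorphic function $a(z)^{n^2}/a(nz)$ on $\cE_\C$; only then does the presentation of $\TMF[\mu^n+k\mu]^{U(1)}$ by sections over $\cE^{\ori}$ match the $a$-trivialized presentation of $\TJF_{k+n^2}$, so that the Jacobi-form shadow of $\chi(\mu^n)\cdot$ is $a(nz)\cdot$ and the Modular-form shadow of $\iota_n^*$ is $\res_{\cE[n]}$. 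This is the same normalization issue that is already hidden in the short proof of Proposition~\ref{prop_a(nz)}, so in practice I would simply invoke that proposition together with the conventions of Section~\ref{subsec_prelim_CnTMF} rather than re-derive it.
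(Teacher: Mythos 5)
Your proposal is correct and takes essentially the same route as the paper, whose entire proof is to cite Proposition~\ref{prop_a(nz)} for the left square and the commutative diagram \eqref{diag_res_U(1)_Cn} for the right square---precisely your two key inputs. Your extra remarks on the multiplicativity of $e_\JF$ over the graded ring $\bigoplus_j \TJF_j$ and on the naturality of the edge homomorphisms along $\iota_n^*$ simply make explicit what the paper leaves implicit.
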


\begin{proof}
	The left square commutes by Proposition \ref{prop_a(nz)}, and the right square commutes by \eqref{diag_res_U(1)_Cn}.
\end{proof}

Let us fix a positive integer $n$. 
So far, we have constructed the fiber sequence \eqref{seq_twistedZn_TJF} for each $k \in \Z$. 
This family of fiber sequences is self-dual in $\Mod_\TMF$ by recalling that we have the following duality relations in $\Mod_\TMF$ via \eqref{eq_twisted_dual}:
\begin{align}
	\TJF_k &\simeq D(\TJF_{-k}[-1]), \label{eq_duality_TJF_2} \\
	\TMF[k\rho_n]^{C_n} &\simeq D(\TMF[-k\rho_n]^{C_n}). \label{eq_duality_TMFCn}
\end{align}

\begin{prop}\label{prop_duality_seq}
	Let $n$ be any positive integer and $k$ be any integer. 
	The following diagram commutes: 
	\begin{align}
		\xymatrix{
			\TMF[k\rho_n + 1]^{C_n} \ar[r]^-{\tr_{C_n}^{U(1)}} \ar[d]^-{\simeq}_-{\eqref{eq_duality_TMFCn} } & \TJF_k \ar[r]^-{\chi(\mu^n)} \ar[d]^-{\simeq}_-{\eqref{eq_duality_TJF_2} }& \TJF_{k+n^2}[-2n^2+2] \ar[r]^-{\res_{U(1)}^{C_n}}  \ar[d]^-{\simeq}_-{\eqref{eq_duality_TJF_2}}& \TMF[k\rho_n + 2]^{C_n}\ar[d]^-{\simeq}_-{\eqref{eq_duality_TMFCn} }  \\
			D(	\TMF[-k\rho_n - 1]^{C_n}) &D(\TJF_{-k}[-1]) \ar[l]^-{D(\res_{U(1)}^{C_n})}&D(\TJF_{-k-n^2}[2n^2-3]) \ar[l]^-{\chi(\mu^n)} &D( \TMF[-k\rho_n - 2]^{C_n})\ar[l]^-{D(\tr_{C_n}^{U(1)})} 
		}
	\end{align}
	Here the rows are the fiber sequences in \eqref{seq_twistedZn_TJF} for $k$ and the $\TMF$-linear dual to that for $-(k+n^2)$, where we have also used $n^2\rho_n = 2n^2\underline{\R}$ in $\RO(C_n)$. 
	In other words, the fiber sequences in \eqref{seq_twistedZn_TJF} for $k$ and $-(k+n^2)$ are dual to each other in $\Mod_\TMF$. 
\end{prop}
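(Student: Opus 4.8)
The plan is to obtain the whole commutative diagram from the single cofiber sequence of $U(1)$-spectra \eqref{seq_cofib_twisted} by applying two functors to it — the $U(1)$-equivariant $\TMF$-homology functor $(\TMF\otimes-)^{U(1)}$, and the $\TMF$-linear Spanier–Whitehead-type dualization $D(-)$ — and checking that the resulting two fiber sequences are identified by the vertical equivalences coming from Fact~\ref{fact_dualizability_TMF} (specifically \eqref{eq_twisted_dual}). Concretely, the bottom row is to be recognized as $D$ applied to the fiber sequence \eqref{seq_twistedZn_TJF} for the index $-(k+n^2)$ in place of $k$, after rewriting the three terms using $n^2\rho_n = 2n^2\,\underline{\R}$ in $\RO(C_n)$: that fiber sequence reads
\ie
\TMF[-(k+n^2)\rho_n + 1]^{C_n} \xrightarrow{\tr_{C_n}^{U(1)}} \TJF_{-(k+n^2)} \xrightarrow{\chi(\mu^n)} \TJF_{-k}[-2n^2+2] \xrightarrow{\res_{U(1)}^{C_n}} \TMF[-(k+n^2)\rho_n + 2]^{C_n},
\fe
and applying $D$, together with $D(\TJF_j)\simeq \TJF_{-j}[-1]$ from \eqref{eq_duality_TJF_2} and $D(\TMF[j\rho_n]^{C_n})\simeq \TMF[-j\rho_n]^{C_n}$ from \eqref{eq_duality_TMFCn}, produces precisely the bottom row of the asserted diagram.

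The key steps, in order, are: (1) record the self-duality of the cofiber sequence \eqref{seq_cofib_twisted} at the level of $U(1)$-spaces/spectra — namely that smashing the cofiber sequence $\Ind_{C_n}^{U(1)}(S^0)\to S^0 \xrightarrow{\chi(\mu^n)} S^{\mu^n}$ with an appropriate representation sphere and applying Spanier–Whitehead duality in $\Sp^{U(1)}$ turns it into (a shift of) the same shape of sequence, using $\Ad(U(1))=0$; (2) apply $(\TMF\otimes-)^{U(1)}$ and use dualizability (Fact~\ref{fact_dualizability_TMF}) to commute $D$ past this functor, so that $D$ of the fiber sequence \eqref{seq_twistedZn_TJF} for one value of $k$ is, up to the canonical identifications \eqref{eq_duality_TJF_2} and \eqref{eq_duality_TMFCn}, the fiber sequence for another value; (3) match the maps: the dual of $\tr_{C_n}^{U(1)}$ is $\res_{U(1)}^{C_n}$ (and conversely) by the very definition of the transfer as the dual of restriction in \eqref{eq_tr_f}, and $D(\chi(\mu^n))=\chi(\mu^n)$ because the Euler class is self-dual under the pairing \eqref{eq_coev_duality_TMF} — this is the computation that the middle square in the proposed diagram commutes; (4) finally, bookkeeping the degree shifts: check that the three terms $D(\TMF[-k\rho_n-1]^{C_n})$, $D(\TJF_{-k}[-1])$, $D(\TJF_{-k-n^2}[2n^2-3])$ in the bottom row agree with $\TMF[k\rho_n+1]^{C_n}$, $\TJF_k$, $\TJF_{k+n^2}[-2n^2+2]$ respectively, which is where one invokes $n^2\rho_n=2n^2\,\underline\R$ and $-2n^2+2 = -(2n^2-3)-1$ etc.

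The main obstacle I expect is step (3): making precise and verifying that the equivalences \eqref{eq_duality_TJF_2} and \eqref{eq_duality_TMFCn} are \emph{natural} with respect to the maps in the fiber sequence — i.e., that $D$ of the restriction map really is (under these identifications) the transfer map, and that the evaluation/coevaluation structure is compatible across all three terms simultaneously, not just pairwise. This is essentially the statement that the duality data of Fact~\ref{fact_dualizability_TMF} / \eqref{eq_coev_duality_TMF} is functorial in the $U(1)$-space being plugged in, which is built into the fact that $(\TMF\otimes-)^{U(1)}$ is a (co)limit-preserving functor landing in dualizable objects; once this naturality is invoked, the self-duality of the cofiber sequence \eqref{seq_cofib_twisted} of spaces does all the work and the rest is the degree-shift bookkeeping of step (4). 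An alternative, more hands-on route is to avoid abstract naturality and instead directly apply $D(-)$ to the diagram \eqref{eq_stabilization} and to the already-established fiber sequence \eqref{seq_twistedZn_TJF}, then identify terms; but this amounts to the same verification and I would present the conceptual version.
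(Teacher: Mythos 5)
Your proposal is correct and, at its core, is the same argument as the paper's: the outer squares commute because the transfer is by definition the dual of the restriction map \eqref{eq_tr_f}, and the middle square commutes because the graded multiplication on $\{\TJF_k\}$ (equivalently, the self-adjointness of $\chi(\mu^n)\cdot$ under the evaluation pairing \eqref{eq_coev_duality_TMF}) is natural with respect to the duality of Fact~\ref{fact_dualizability_TMF}, with the remaining content being the degree and $n^2\rho_n=2n^2\underline{\R}$ bookkeeping. The additional packaging via self-duality of the cofiber sequence \eqref{seq_cofib_twisted} is harmless but not needed; the verification you isolate in step (3) is exactly the paper's proof.
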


\begin{proof}
	The commutativity of the right and left squares follows from the fact that the dual of the restriction map is the transfer map. The middle square commutes because the graded multiplicative structure of $\{ \TJF_k \}_{k \in \mathbb{Z}}$ is natural with respect to the duality in Fact~\ref{fact_dualizability_TMF}.
\end{proof}

\subsection{Odd twisted case for $n=2$}\label{subsec_oddtwist_C2}

In the case $n=2$, the representation $\rho_2 \in \RO(C_2)$ is reducible, namely we have $2\lambda = \rho_2$ with the one-dimensional sign representation $\lambda \in \RO(C_2)$. As we have seen in \eqref{eq_C2_twist}, this representation $\lambda$ realizes the generator in the $8$-periodic classification of twists of $C_2$-equivariant $\TMF$.
Here we produce a fiber sequence similar to Proposition \ref{prop_twisted} that applies to $\TMF[n\lambda]^{C_2}$ with odd $n$. 

\begin{lem}
	Define $A \colon S^{\mu^2} \to \Sigma \Ind_{C_2}^{U(1)}(S^\lambda)$ to be a map of pointed $U(1)$-spaces given by the composition
	\begin{align}
	A \colon	S^{\mu^2} \xrightarrow{\rm{Cof}(\chi(\mu^2))} \Sigma(U(1)/C_2)_+ =	\Sigma \Ind_{C_2}^{U(1)}(S^0)  \xrightarrow{\Ind_{C_2}^{U(1)}(\chi(\lambda)) }\Sigma \Ind_{C_2}^{U(1)}(S^\lambda) , 
	\end{align}
	where the first arrow is the cofiber of the map $\chi(\mu^2) \colon S^0 \to S^{\mu^2}$. 
	Then $A$ is identified with the cofiber of $\varphi_2 \colon S^\mu \to S^{\mu^2}$ so that the following is a commutative diagram of fiber sequences in $\Sp^{U(1)}$:
	\begin{align}
		\xymatrix@C=7em{
		S^0 \ar[d]^-{\chi(\mu)} \ar[r]^-{\chi(\mu^2)} & S^{\mu^2} \ar@{=}[d] \ar[r]^-{\rm{Cof}(\chi(\mu^2))} &	\Sigma(U(1)/C_2)_+  =	\Sigma \Sigma^\infty \Ind_{C_2}^{U(1)}(S^0) \ar[d]_-{\Ind_{C_2}^{U(1)}(\chi(\lambda)) } \\
		S^\mu \ar[r]^-{\varphi_2} & S^{\mu^2} \ar[r]^-{A} & \Sigma \Ind_{C_2}^{U(1)}(S^\lambda) 
		}
	\end{align}
	where the top horizontal sequence is the fiber sequence \eqref{seq_sigma^n}. 
\end{lem}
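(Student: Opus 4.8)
The plan is to identify the map $A$ as the cofiber of $\varphi_2\colon S^\mu \to S^{\mu^2}$ by comparing two octahedra built from the same maps. First I would set up the basic commutative triangle from \eqref{seq_sigma^n}, namely $\chi(\mu^2) = \varphi_2 \circ \chi(\mu)\colon S^0 \to S^\mu \to S^{\mu^2}$, and recall that the horizontal cofiber sequence in \eqref{seq_sigma^n} exhibits $\Ind_{C_2}^{U(1)}(S^0) = (U(1)/C_2)_+$ as the fiber of $\chi(\mu^2)$, equivalently $\Sigma\Ind_{C_2}^{U(1)}(S^0)$ as the cofiber of $\chi(\mu^2)$. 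This is the content we are allowed to assume.

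The key step is the octahedral axiom applied to the composite $\chi(\mu^2) = \varphi_2 \circ \chi(\mu)$. The octahedral axiom produces a cofiber sequence relating the three cofibers: $\operatorname{cofib}(\chi(\mu)) \to \operatorname{cofib}(\chi(\mu^2)) \to \operatorname{cofib}(\varphi_2) \to \Sigma\operatorname{cofib}(\chi(\mu))$. Now $\operatorname{cofib}(\chi(\mu)) \simeq \Sigma\Ind_{C_2}^{U(1)}(S^0)$ by the horizontal cofiber sequence $\Ind_{C_2}^{U(1)}(S^0)\to S^0\to S^\mu$ — wait, more precisely $\operatorname{cofib}(\chi(\mu)) \simeq \Sigma(U(1))_+$ since $U(1)_+ \to S^0 \xrightarrow{\chi(\mu)} S^\mu$; I would instead directly use that the cofiber of $\chi(\mu^2)$ is $\Sigma\Ind_{C_2}^{U(1)}(S^0)$ and feed that into the octahedron for the composite, reading off that $\operatorname{cofib}(\varphi_2)$ is the cofiber of the induced map $\operatorname{cofib}(\chi(\mu^2)) \to \operatorname{cofib}(\varphi_2)$... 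Let me restructure: the cleanest route is to observe that $A$ is \emph{defined} as $\operatorname{cofib}(\chi(\mu^2))$ postcomposed with $\Ind_{C_2}^{U(1)}(\chi(\lambda))$, so the claim that $A \simeq \operatorname{cofib}(\varphi_2)$ amounts to showing that the square
\[
\xymatrix{
S^0 \ar[r]^-{\chi(\mu^2)}\ar[d]_-{\chi(\mu)} & S^{\mu^2}\ar@{=}[d]\\
S^\mu \ar[r]^-{\varphi_2} & S^{\mu^2}
}
\]
is a pushout square (equivalently, the induced map on vertical cofibers is an equivalence), since then the cofiber of the top map maps to the cofiber of the bottom map via the identity on $S^{\mu^2}$, and both cofibers receive compatible maps from $S^{\mu^2}$; the vertical cofiber of the left column is $\operatorname{cofib}(\chi(\mu)/\chi(\mu)) $... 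The genuinely clean statement: the above square is a pushout iff the map of cofibers of the \emph{horizontal} arrows, induced by $(\chi(\mu),\mathrm{id})$, is an equivalence. The horizontal cofibers are $\Sigma\Ind_{C_2}^{U(1)}(S^0)$ (top) and $\Sigma\Ind_{C_2}^{U(1)}(S^\lambda)$ (bottom, since $U(1)_+ \xrightarrow{\varphi_2\text{-twisted}}\dots$); and the induced map is precisely $\Sigma\Ind_{C_2}^{U(1)}(\chi(\lambda))$. So I would prove the square is a pushout, and then all four claimed commutativities and the fiber-sequence identification follow formally.

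The substantive input is therefore: \emph{the square above is a pushout square of pointed $U(1)$-spaces}. I would prove this by identifying the cofiber of $\varphi_2\colon S^\mu \to S^{\mu^2}$. Using the $C_2\hookrightarrow U(1)\xrightarrow{(-)^2} U(1)$ extension and $\mu^2 = \res_{\varphi_2}\mu$, restricting along $\varphi_2$ and using that $\Ind$-$\Res$ unit/counit descriptions: concretely, $\varphi_2\colon S^\mu\to S^{\mu^2}$ is, up to the standard identification $S^{\mu^2}\simeq \varphi_2^*$ of the target $U(1)$-action, the map $S^{\res\mu}\to S^{\res\mu}$ where the source carries the $U(1)$-action through $\varphi_2$ but the circle also acts by rotating $z\mapsto z$, i.e. $\varphi_2$ is the degree-$2$ covering on the sphere $S^\mu = \mathbb{C}\cup\infty$, $z\mapsto z^2$, equivariant for $U(1)$ acting by $t\cdot z = tz$ on source and $t\cdot w = t^2 w$ on target. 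The two points $\{0,\infty\}$ are the fixed points of the source $U(1)$-action and the map is $2$-to-$1$ away from them with fiber a free $C_2$-orbit, so its cofiber is $\Sigma(U(1)/C_2)_+$ smashed appropriately — this is exactly the $\lambda$-twisted statement, because the normal data of the branch locus carries the sign representation. The main obstacle is making this last geometric identification rigorous at the level of genuine $U(1)$-spectra rather than naively: I expect to handle it by applying $\Res_{C_2}^{U(1)}$ and $\Phi^{C_2}$ (geometric fixed points) and checking the equivalence after these conservative-enough functors, or alternatively by directly exhibiting the pushout via an explicit $U(1)$-CW structure on $S^{\mu^2}$ built from $S^\mu$ by attaching a free $U(1)$-cell along $\varphi_2$ — the cofiber of attaching a cell along a free orbit being the relevant induced sphere. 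Once the pushout square is established, the commutative diagram of fiber sequences in the statement is immediate from the octahedral/pushout formalism, with the labeled maps being exactly the ones appearing in the pushout.
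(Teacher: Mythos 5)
The step your restructured plan rests on is false. The square you display---$\chi(\mu^2)\colon S^0\to S^{\mu^2}$ on top, $\varphi_2\colon S^\mu\to S^{\mu^2}$ on the bottom, $\chi(\mu)$ on the left and the identity on the right---is \emph{not} a homotopy pushout. Its vertical cofibers are $\cofib(\chi(\mu))\simeq \Sigma\, U(1)_+$ and $\cofib(\id_{S^{\mu^2}})\simeq \ast$, so the induced map on vertical cofibers cannot be an equivalence; and by your own identification the induced map on horizontal cofibers would be $\Sigma \Ind_{C_2}^{U(1)}(\chi(\lambda))$, whose cofiber is $\Sigma^2\, U(1)_+\neq \ast$ (apply $\Ind_{C_2}^{U(1)}$, which preserves cofiber sequences, to $(C_2)_+\to S^0\xrightarrow{\chi(\lambda)}S^\lambda$). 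Worse, if that square \emph{were} a pushout you would conclude $\cofib(\varphi_2)\simeq\cofib(\chi(\mu^2))\simeq\Sigma(U(1)/C_2)_+$, contradicting the lemma you are proving: $\Ind_{C_2}^{U(1)}(S^0)$ and $\Ind_{C_2}^{U(1)}(S^\lambda)$ are genuinely different (nonequivariantly, $(U(1)/C_2)_+\simeq S^0\vee S^1$, while $\Ind_{C_2}^{U(1)}(S^\lambda)$ is the Thom spectrum of the M\"obius bundle, a shifted mod-$2$ Moore spectrum). So ``prove the square is a pushout, then everything follows formally'' cannot be carried out, and your geometric endgame, if made rigorous, would in fact disprove the pushout claim rather than establish it. The fallback of realizing $S^{\mu^2}$ as $S^\mu$ with a free $U(1)$-cell attached along $\varphi_2$ also does not work as stated: $\varphi_2$ is not a subcomplex inclusion, and the cofiber of attaching a free cell is an induced sphere with untwisted fiber, which can never produce the $\lambda$-twist.

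The octahedral argument you began with and then abandoned is the correct one, and it is essentially the paper's proof. From $\chi(\mu^2)=\varphi_2\circ\chi(\mu)$ (the commutative triangle in \eqref{seq_sigma^n}), the octahedral axiom gives a cofiber sequence $\cofib(\chi(\mu))\to\cofib(\chi(\mu^2))\to\cofib(\varphi_2)$, i.e.\ $\Sigma\, U(1)_+\to\Sigma(U(1)/C_2)_+\to\cofib(\varphi_2)$. Equivalently, the square that really is a pushout is the one whose columns are the cofiber maps of $\chi(\mu)$ and $\chi(\mu^2)$ and whose rows are $\varphi_2\colon S^\mu\to S^{\mu^2}$ and the suspension of the projection $U(1)_+\to (U(1)/C_2)_+$: both vertical cofibers are $S^1$, compared by the identity of $S^0$. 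One then identifies that bottom row with $\Sigma\Ind_{C_2}^{U(1)}$ applied to $(C_2)_+\to S^0$, so its cofiber is $\Sigma\Ind_{C_2}^{U(1)}(S^\lambda)$ with cofiber map $\Sigma\Ind_{C_2}^{U(1)}(\chi(\lambda))$ out of $\Sigma(U(1)/C_2)_+$; this is exactly the paper's diagram \eqref{diag_lem_oddtwist}. No branch-locus analysis of $z\mapsto z^2$, geometric fixed points, or equivariant CW structures is needed---the only inputs are the two Euler-class cofiber sequences and the fact that induction preserves cofiber sequences. This route also automatically identifies the canonical map $S^{\mu^2}\to\cofib(\varphi_2)$ with the composite $A$, i.e.\ gives the commutativity of the right-hand square of the lemma, a point your scheme would have had to verify separately even if its pushout claim had been correct.
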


\begin{proof}
Consider the cofiber sequence of pointed $C_2$-spaces
\begin{align}
	(C_2)_+ \to S^0 \xrightarrow{\chi(\lambda)} S^\lambda. 
\end{align}
Applying $\Ind_{C_2}^{U(1)}$ gives a cofiber sequence of pointed $U(1)$-spaces, 
\begin{align}
	\Ind_{C_2}^{U(1)}((C_2)_+ )  = (U(1))_+ \xrightarrow{\varphi_2} \Ind_{C_2}^{U(1)} (S^0) = (U(1)/C_2)_+ \xrightarrow{\Ind_{C_2}^{U(1)}(\chi(\lambda))} \Ind_{C_2}^{U(1)} (S^\lambda). 
\end{align}
Consider the following commutative diagram in $\Sp^{U(1)}$:
\begin{align}\label{diag_lem_oddtwist}
	\xymatrix{
	 S^0 \ar@{=}[r] \ar[d]^-{\chi(\mu)}& S^0  \ar[d]^-{\chi(\mu^2)} & \\
	S^\mu \ar[r]^-{\varphi_2} \ar[r] \ar[d] & S^{\mu^2} \ar[d]|-{\rm{Cof}(\chi(\mu^2))} \ar[rrd]^-{A}&\\
	\Sigma (U(1))_+  \ar[r]^-{\varphi_2}&  \Sigma (U(1)/C_2)_+  \ar[rr]_-{\Ind_{C_2}^{U(1)}(\chi(\lambda))} && \Sigma \Ind_{C_2}^{U(1)} (S^\lambda) 
	}, 
\end{align}
where the two vertical sequences are cofiber sequences and the bottom horizontal sequence is the cofiber sequence above. 
By this diagram, we get that $A$ is the fiber of $\varphi_2 \colon S^\mu \to S^{\mu^2}$ as desired. 
\end{proof}

To state the result, we make the following observation:

\begin{prop}\label{prop_Chua}
	~
	\begin{enumerate}
		\item The element $\chi(\mu^2) \in \pi_6 \TJF_4$ decomposes as
		\begin{align}
			\chi(\mu^2) = \chi(\mu) \cdot \{c\}, 
		\end{align}
		where $\chi(\mu) \in \pi_0 \TJF_1$ as before, and $\{c\} \in \pi_6 \TJF_3$ is the element whose image under the map $e_{\JF} \colon \pi_6 \TJF_3 \to \JF_3|_{\deg = 6}$ is $c \in \JF_3|_{\deg = 6}$ in \eqref{eq_notation_c}. Note that the map $e_{\JF} \colon \pi_6 \TJF_3 \to \JF_3|_{\deg = 6}$ is an isomorphism \cite{Tominaga}.                                                           
		\item Let $[\varphi_2] \in \pi_0(S^{\mu^2 - \mu})^{U(1)}$ be the element specified by the map $\varphi_2 \colon S^{\mu} \to S^{\mu^2}$ of $U(1)$-spheres. 
		Then the unit map $u \colon S^{\mu^2 - \mu} \to (\TMF \otimes S^{\mu^2 - \mu})^{U(1)} \simeq \TJF_3[6]$ sends
	$[\varphi_2]$ to $\{c\}$. 
	\end{enumerate}
\end{prop}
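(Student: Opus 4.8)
The plan is to deduce both statements from a single factorization of $\chi(\mu^2)$, combined with the formula for $e_\JF(\chi(\mu^n))$ in Proposition~\ref{prop_a(nz)} and the multiplicativity of the edge homomorphism.

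First I would record the factorization. The commutative triangle in \eqref{seq_sigma^n} with $n=2$ says $\chi(\mu^2) = \varphi_2 \circ \chi(\mu)$ as pointed $U(1)$-maps $S^0 \to S^{\mu^2}$. Stably, under the equivalence $S^{\mu^2} \simeq S^{\mu^2-\mu}\wedge S^{\mu}$, the map $\varphi_2 \colon S^\mu \to S^{\mu^2}$ is, essentially by definition of the class $[\varphi_2] \in \pi_0(S^{\mu^2-\mu})^{U(1)}$, the smash $[\varphi_2]\wedge\id_{S^\mu}$. Applying $(\TMF\otimes-)^{U(1)}$ and using the $\sigma$-orientation identifications from \eqref{eq_sigma_prelim}, namely $\TMF[\mu]^{U(1)}\simeq\TJF_1$, $\TMF[\mu^2-\mu]^{U(1)}\simeq\TJF_3[-6]$ and $\TMF[\mu^2]^{U(1)}\simeq\TJF_4[-6]$, together with the fact that the graded multiplication on $\{\TJF_k\}_k$ is induced by smashing representation spheres, one obtains $\chi(\mu^2) = \chi(\mu)\cdot u([\varphi_2])$ in $\pi_6\TJF_4$, where $u([\varphi_2])\in\pi_6\TJF_3$ is the image of $[\varphi_2]$ under the unit map. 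This is Part~(1) with $\{c\}$ provisionally replaced by $u([\varphi_2])$, so it suffices to prove $u([\varphi_2]) = \{c\}$, which is Part~(2).

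To identify $u([\varphi_2])$ I would apply $e_\JF$. It is a ring homomorphism (by the commutative square \eqref{diag_TJF_JF} and the graded ring structure on $\{\TJF_k\}_k$), and $e_\JF(\chi(\mu)) = a$ by \eqref{eq_char_a}; hence the factorization yields $a\cdot e_\JF(u([\varphi_2])) = e_\JF(\chi(\mu^2))$. By Proposition~\ref{prop_a(nz)} the right-hand side equals $a(2z) = \theta_{11}(2z,\tau)/\eta(\tau)^3$, and the definitions \eqref{eq_notation_a}, \eqref{eq_notation_c} give the elementary identity $a(2z) = a\cdot c$. Since multiplication by $a = \phi_{-1,\frac12}$ is injective on the ring of Jacobi Forms ($a$ vanishes only along $z\in\Z\tau+\Z$, so it is a nonzerodivisor), I may cancel $a$ and conclude $e_\JF(u([\varphi_2])) = c$ in $\JF_3|_{\deg=6}$. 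Finally, because $e_\JF\colon\pi_6\TJF_3 \to \JF_3|_{\deg=6}$ is an isomorphism by the computation of \cite{Tominaga}, this forces $u([\varphi_2]) = \{c\}$, which is Part~(2); substituting this back into the factorization gives $\chi(\mu^2) = \chi(\mu)\cdot\{c\}$, which is Part~(1).

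The step I expect to be the main obstacle is the first one: carefully matching the geometric map $\varphi_2\colon S^\mu\to S^{\mu^2}$ with its stable class $[\varphi_2]$, and verifying that, after applying $(\TMF\otimes-)^{U(1)}$ and the shifts of \eqref{eq_sigma_prelim}, the composite $([\varphi_2]\wedge\id_{S^\mu})\circ\chi(\mu)$ is precisely the graded $\mathbb{E}_2$-product $\chi(\mu)\cdot u([\varphi_2])$ that appears in Part~(1). Once this bookkeeping is in place, the remainder is formal, relying only on the identity $a(2z) = ac$ and the input $e_\JF\colon\pi_6\TJF_3\xrightarrow{\ \sim\ }\JF_3|_{\deg=6}$ from \cite{Tominaga}.
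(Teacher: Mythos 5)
Your proof is correct and follows essentially the same route as the paper's: both rest on the factorization $\chi(\mu^2)=\varphi_2\circ\chi(\mu)$ (the upper square of \eqref{diag_lem_oddtwist}), Proposition \ref{prop_a(nz)}, the identity $a(2z)=a\cdot c$, and the computations of \cite{Tominaga}. The only cosmetic difference is the order of deduction: the paper proves (1) directly, using that $\chi(\mu)\cdot\colon\pi_6\TJF_3\to\pi_6\TJF_4$ and $e_\JF\colon\pi_6\TJF_4\to\JF_4|_{\deg=6}$ are isomorphisms, and then reads off (2) from the square, whereas you establish (2) first by canceling the nonzerodivisor $a$ in the ring of Jacobi Forms and invoking $e_\JF\colon\pi_6\TJF_3\simeq\JF_3|_{\deg=6}$, then substitute back to get (1).
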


\begin{proof}
	The first claim follows from the equation $\frac{a(2z)}{a(z)} = c$ and the fact that both maps in $\pi_{6}\TJF_3 \xrightarrow{\chi(\mu)} \pi_6 \TJF_4 \xrightarrow{e_\JF} \JF_4|_{\deg = 6}$ are isomorphisms (see \cite{Tominaga}). 
    The second claim follows from the commutativity of the upper square of \eqref{diag_lem_oddtwist}. 
\end{proof}

We repeat the previous subsection's procedure by wedging $S^{k\rho_2}$ to diagram \eqref{diag_lem_oddtwist} and applying $U(1)$-equivariant $\TMF$-homology, yielding the following result:

\begin{prop}\label{prop_oddtwisted}
	Let $k$ be an integer. 
	We have a fiber sequence in $\Mod_\TMF$,
	\begin{align}\label{seq_oddtwisted}
		\TMF[(2k+1)\lambda + 1]^{C_2} \xrightarrow{} \TJF_{k+1} \xrightarrow{\{c\}} \TJF_{k+4}[-6] \xrightarrow{\chi(\lambda) \circ \res_{U(1)}^{C_2}} \TMF[(2k+1)\lambda + 2]^{C_2}. 
	\end{align}
	Moreover, the following diagram of fiber sequences commutes:
	\begin{align}\label{eq_twisted_stabilization}
		\xymatrix{
			\TMF[2k\lambda + 1]^{C_2} \ar[r]^-{\tr_{C_2}^{U(1)}} \ar[d]^-{\chi(\lambda) \cdot} & \TJF_{k} \ar[r]^-{\chi(\mu^2)} \ar[d]^-{\chi(\mu) \cdot}& \TJF_{k+4}[-6] \ar@{=}[d] \ar[r]^-{\res_{U(1)}^{C_2}} &\TMF[2k\lambda +2]^{C_2} \ar[d]^-{\chi(\lambda) \cdot} \\
			\TMF[(2k+1)\lambda + 1]^{C_2} \ar[r] & \TJF_{k+1} \ar[r]^-{\{c\}} & \TJF_{k+4}[-6]  \ar[r] & \TMF[(2k+1)\lambda + 2]^{C_2}
		}
	\end{align}
	Here, the upper row is the fiber sequence in Proposition \ref{prop_twisted}. 
\end{prop}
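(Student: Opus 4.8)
The plan is to run the same machine as in Proposition~\ref{prop_twisted}, but using the previous lemma's cofiber sequence instead of \eqref{seq_sigma^n}. First I would take the commutative diagram \eqref{diag_lem_oddtwist} in $\Sp^{U(1)}$ and smash everything with the representation sphere $S^{k\rho_2}$, for a fixed integer $k$. Since $\rho_2 = 2\lambda$ in $\RO(C_2)$, and since $\Ind_{C_2}^{U(1)}$ is compatible with smashing by restrictions of $U(1)$-spheres via Lemma~\ref{lem_IndRes} (i.e.\ $\Ind_{C_2}^{U(1)}(S^{\tau})\wedge S^{k\mu}\simeq \Ind_{C_2}^{U(1)}(S^{\tau + k\rho_2})$ for $\tau\in\RO(C_2)$), smashing the bottom-right term $\Sigma\Ind_{C_2}^{U(1)}(S^\lambda)$ with $S^{k\rho_2}$ yields $\Sigma\Ind_{C_2}^{U(1)}(S^{(2k+1)\lambda})$, which is the source of the odd twist. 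The middle column becomes $S^{k\mu}\xrightarrow{\varphi_2} S^{\mu^2+k\mu}\xrightarrow{A} \Sigma\Ind_{C_2}^{U(1)}(S^{(2k+1)\lambda})$, and the left column is the stabilization map $\chi(\mu)\colon S^{k\mu}\to S^{(k+1)\mu}$ landing in $S^{\mu^2+k\mu}$.

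Next I would apply the $U(1)$-equivariant $\TMF$-homology functor $(\TMF\otimes-)^{U(1)}$. On the middle horizontal cofiber sequence this produces, after the identifications $(\TMF\otimes\Ind_{C_2}^{U(1)}(S^{(2k+1)\lambda}))^{U(1)}\simeq \TMF[(2k+1)\lambda+1]^{C_2}$ (a shift by $1$ from the desuspension of the $\Sigma$), $(\TMF\otimes S^{(k+1)\mu})^{U(1)}=\TJF_{k+1}$, and $(\TMF\otimes S^{\mu^2+k\mu})^{U(1)}\simeq \TJF_{k+n^2}[-2n^2+2]=\TJF_{k+4}[-6]$ using \eqref{eq_sigma_prelim} with $n=2$, exactly the claimed fiber sequence \eqref{seq_oddtwisted}. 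Here the middle map becomes multiplication by the element in $\pi_0\TMF[\mu^2-\mu]^{U(1)}\simeq \pi_6\TJF_3$ determined by $[\varphi_2]$, which by Proposition~\ref{prop_Chua}(2) is precisely $\{c\}$; and the third map is $\chi(\lambda)\circ\res_{U(1)}^{C_2}$ because it is $(\TMF\otimes A)^{U(1)}$ followed by the boundary, and $A$ factors through $\Ind_{C_2}^{U(1)}(\chi(\lambda))$ by the lemma. The commutativity of \eqref{eq_twisted_stabilization} is then immediate: it is just the image under $(\TMF\otimes-)^{U(1)}$ of the commutative square relating the two vertical cofiber sequences of \eqref{diag_lem_oddtwist} after smashing with $S^{k\rho_2}$, where the identification of the top horizontal sequence with the fiber sequence of Proposition~\ref{prop_twisted} for $n=2$ uses $\chi(\mu^2)=\chi(\mu)\cdot\{c\}$ from Proposition~\ref{prop_Chua}(1) together with the fact that the top row of \eqref{diag_lem_oddtwist} is \eqref{seq_sigma^n}.

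The main obstacle, such as it is, is purely bookkeeping: keeping careful track of the suspension shifts (the $\Sigma$ on $\Ind_{C_2}^{U(1)}(S^\lambda)$ contributing the $+1$, and the $-2n^2+2=-6$ shift from the sigma-orientation identification \eqref{eq_sigma_prelim}), and correctly identifying the connecting map of the $\TMF$-homology long cofiber sequence as $\chi(\lambda)\circ\res_{U(1)}^{C_2}$ rather than merely $\res_{U(1)}^{C_2}$ --- this is where the factorization $A = \Sigma\Ind_{C_2}^{U(1)}(\chi(\lambda))\circ\mathrm{Cof}(\chi(\mu^2))$ from the lemma does the real work. Everything else follows formally from applying an exact (cofiber-preserving) functor to a diagram of cofiber sequences, so no genuine computation beyond the already-established Propositions~\ref{prop_twisted} and~\ref{prop_Chua} is needed.
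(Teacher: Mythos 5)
Your proposal is correct and is essentially the paper's own argument: the paper proves Proposition \ref{prop_oddtwisted} precisely by smashing diagram \eqref{diag_lem_oddtwist} with the $k$-fold twist and applying $(\TMF\otimes-)^{U(1)}$, with the middle map identified as $\{c\}$ via Proposition \ref{prop_Chua}(2) and the last map via the factorization $A=\Sigma\Ind_{C_2}^{U(1)}(\chi(\lambda))\circ\mathrm{Cof}(\chi(\mu^2))$, exactly as you describe. The only slip is notational: after smashing with $S^{k\mu}$ the source of $\varphi_2$ is $S^{(k+1)\mu}$ rather than $S^{k\mu}$, consistent with your correct identification of the middle term as $\TJF_{k+1}$.
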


\section{Application 1 : $C_2$-equivariant $\TMF$}\label{sec_app1}

This section applies the general strategy developed in Section \ref{sec_general} to the case $n=2$. 
As we have seen in \eqref{eq_C2_twist} and Proposition \ref{prop_sigma_Cp} (1), the group $[BC_2, P^4 BO] \simeq \Z/8$ is generated by the class $[\lambda]$ of the fundamental real $1$-dimensional representation $\lambda \in \RO(C_2)$, and the $\RO(C_2)$-graded $\TMF$ satisfies the corresponding periodicity 
\begin{align}
	\TMF[8\overline{\lambda}]^{C_2} = \TMF[4\overline{\rho_2}]^{C_2} \simeq \TMF^{C_2}.  
\end{align}
In this section, we analyze each of the $8$ cases and determine their $\TMF$-module structures. 
Section~\ref{sec_untwist_C2} deals with the untwisted and the $\pm \lambda$-twisted cases, and Section~\ref{sec_twisted_C2} considers the rest. 
The main results are presented in Theorems~\ref{thm_nontwist_C2} and~\ref{thm_C2_twisted}. 
An elegant pattern of cell diagrams is shown in Figure~\ref{celldiag_TMFZ/2}. 

\subsection{Untwisted and $\pm \lambda$-twisted cases}\label{sec_untwist_C2}

For $C_n = C_2$, we have an identification
\begin{align}
	\widetilde{\TMF^{C_2}} := \cofib\left( \TMF \xrightarrow{\tr_e^{C_2}} \TMF^{C_2}\right) \simeq \TMF[\lambda]^{C_2} 
\end{align}
by the stabilization-restriction fiber sequence \eqref{eq_C2_stabres} for $C_2$. This means that Corollary~\ref{cor_untwisted} is written as
\begin{align}\label{eq_TMFC2_dec}
	\TMF^{C_2} \simeq \TMF \oplus \TMF[\lambda]^{C_2} 
\end{align}
with
\begin{align}\label{eq_1/8_cofib}
	\TMF[\lambda]^{C_2} \simeq \mathrm{cofib} \left( \TMF[-2] \xrightarrow{\chi(\mu^2)} \TJF_{4}[-8]\right). 
\end{align}

We can see the equivalence \eqref{eq_1/8_cofib} also by applying Proposition~\ref{prop_oddtwisted} for $k=0$. 
So the problem is reduced to understanding the homotopy type of the cofiber of $\chi(\mu^2)$. We note that we are not localizing at any prime. In particular, we reproduce and integrally refine the result by Chua \cite{chua}, who deduces the cell structure of $\TMF^{C_2}$ after localizing at prime $2$ by a computational method. 

To state the main result, we need to look into the element $\{c\} \in \pi_6 \TJF_3$ a little more. 
Recall that we have $\TJF_3 \simeq \TMF \otimes P_3$ with the cell complex $P_3 \simeq S^0 \cup_\nu S^4 \cup_\eta S^6$ (Fact \ref{fact_TJF_cellstr}, \eqref{eq_P3}).
\begin{prop}\label{prop_c}
	\begin{enumerate}
		\item We have $\pi_6 P_3 \simeq \Z$, and the image of the map
		\begin{align}\label{eq_P3_S6}
			\pi_6 P_3 \to \pi_6 P_3/P_2 \simeq \pi_6 S^6 = \Z
		\end{align}
		is $2\Z$. 
		\item The unit map
		\begin{align}\label{eq_P3_TJF3}
			\pi_6 P_ 3 \xrightarrow{u} \pi_6 \TMF \otimes P^3 \simeq \pi_6 \TJF_3
		\end{align}
		is injective, and its image is $\{c\} \cdot \Z$. 
		Denote by $\hat{c} \in \pi_6 P_3$ the unique element that maps to $\{c\}$ by the map \eqref{eq_P3_TJF3}. 
		This element maps to $2$ by \eqref{eq_P3_S6}. 
	\end{enumerate}
\end{prop}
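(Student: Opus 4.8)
The plan is to analyze the cell complex $P_3 \simeq S^0 \cup_\nu S^4 \cup_\eta S^6$ directly via the two-step filtration $S^0 \subset P_2 \subset P_3$, where $P_2 = S^0 \cup_\nu S^4$ and $P_3/P_2 \simeq S^6$. For part (1), I would run the long exact sequence in stable homotopy groups associated with the cofiber sequence $P_2 \to P_3 \to S^6$. In degree $6$ this reads $\pi_6 P_2 \to \pi_6 P_3 \to \pi_6 S^6 = \Z \xrightarrow{\partial} \pi_5 P_2$, where the boundary map $\partial$ is (stably) right multiplication by the attaching class $\eta \in \pi_1 S^0$ followed by the inclusion $S^0 \hookrightarrow P_2$, i.e. the composite $\Z = \pi_6 S^6 \xrightarrow{\eta} \pi_5 S^0 = \Z/2 \to \pi_5 P_2$. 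Since $\eta$ has order $2$, this composite kills $2\Z$, so the image of $\pi_6 P_3 \to \pi_6 S^6$ contains $2\Z$; to see it is exactly $2\Z$ I need the composite $\pi_6 S^6 \xrightarrow{\eta} \pi_5 S^0 \to \pi_5 P_2$ to be injective on $\Z/2$, equivalently that the generator of $\pi_5 S^0 = \Z/2 \cdot \eta\nu$ (or $\eta^2\cdot\,$?—note $\pi_5 S = 0$ actually, so one must be careful) is not hit. This is where the first subtlety lies: $\pi_5 S^0 = 0$ integrally, so in fact $\partial$ is automatically zero and $\pi_6 P_3 \to \pi_6 S^6$ is surjective onto $\Z$ unless the attaching map $\eta$ detects something through the $\nu$-cell. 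The correct statement must therefore come from the $\nu$-attachment: the map $\pi_6 S^6 \to \pi_5 P_2$ factors as $\eta$ into the top cell $S^4$ of $P_2$, landing in $\pi_5 S^4 = \Z/2\cdot\eta$, and this $\eta$ is nonzero in $\pi_5 P_2$ precisely because $S^4$ is attached to $S^0$ by $\nu$ and $\eta\nu \neq 0$ — no wait, $\pi_5 S^0$ again. The resolution is that $\pi_5 P_2$ is computed from $0 \to \pi_5 S^4 \to \pi_5 P_2 \to \pi_5 S^0 = 0$, so $\pi_5 P_2 = \Z/2$, generated by $\eta$ on the top cell, and the boundary hits it nontrivially; hence the image in $\pi_6 S^6$ is $2\Z$, and $\pi_6 P_3$ fits in $0 \to \pi_6 P_2 \to \pi_6 P_3 \to 2\Z \to 0$ with $\pi_6 P_2 = 0$ (from $\pi_6 S^4 = \Z/24\cdot\nu \cdot$—careful again, $\pi_6 S^4 = \pi_2 S = \Z/2$, and $\pi_6 S^0 = \Z/2$, with the $\nu$-attachment connecting them; the net is $\pi_6 P_2 = 0$). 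So $\pi_6 P_3 \simeq \Z$, generated by an element $\hat c$ mapping to $2$ in $\pi_6 S^6$.

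For part (2), I would use the unit map $S \to \TMF$ smashed with $P_3$, giving $P_3 = S \otimes P_3 \xrightarrow{u} \TMF \otimes P_3 \simeq \TJF_3$. Injectivity of $\pi_6 P_3 \to \pi_6 \TJF_3$ follows by comparing with the top-cell quotients: the commutative square relating $\pi_6 P_3 \to \pi_6 S^6 = \Z$ and $\pi_6 \TJF_3 \to \pi_6 (\TMF \otimes S^6) = \pi_0 \TMF = \Z$ (via the diagram \eqref{eq_attaching_P}, the restriction $\res_{U(1)}^e$ to $\TMF[6]$), together with the edge homomorphism $e_\JF \colon \pi_6 \TJF_3 \xrightarrow{\sim} \JF_3|_{\deg=6}$ which is an isomorphism by \cite{Tominaga}, forces $u$ to be injective on $\pi_6$. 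Then I identify the image: by Proposition~\ref{prop_Chua}(2), the class $\{c\} \in \pi_6 \TJF_3$ is the image of the unit class $[\varphi_2] \in \pi_0(S^{\mu^2-\mu})^{U(1)}$, and $[\varphi_2]$ is detected on the relevant top cell with multiplicity $2$ because $\varphi_2 \colon S^\mu \to S^{\mu^2}$ has degree $2$ on the underlying (non-equivariant, hence $S^2 \to S^2$, or rather the fixed-point) spheres — this matches the computation $a(2z)/a(z) = c$ and the fact that $\res_{z=0}$ of $c$ relates to the degree-$2$ cover. Concretely, $\{c\}$ maps to $2 \in \pi_0\TMF = \Z$ under $\res_{U(1)}^e$ composed with the identification of the top cell, exactly as $\hat c$ maps to $2$ under \eqref{eq_P3_S6}; since both $\pi_6 P_3 \to \Z$ and $\pi_6 \TJF_3 \to \Z$ are injective (the former with image $2\Z$, the latter because $e_\JF$ is an iso and $\res_{z=0}\colon \JF_3|_{\deg=6} \to \MF|_{\deg=0}$ is injective), the element $\hat c$ must map to $\{c\}$, and the image of $u$ on $\pi_6$ is exactly $\{c\}\cdot\Z$.

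The main obstacle I anticipate is bookkeeping the low-degree stable stems and the $\nu,\eta$ attaching-map contributions correctly — in particular pinning down $\pi_6 P_2$ and $\pi_5 P_2$ and the precise boundary map in the Puppe sequence, since naive vanishing of $\pi_5 S$ is misleading and the real content is that the $\eta$-attaching map of the top cell becomes nontrivial after the $\nu$-attachment. Once that is done carefully, parts (1) and (2) follow by the diagram chase above, using the known isomorphism $e_\JF\colon \pi_6\TJF_3 \xrightarrow{\sim} \JF_3|_{\deg=6}$ and Proposition~\ref{prop_Chua} to transport everything to the explicit Jacobi-form computation $a(2z)/a(z) = c$, which visibly has a factor of $2$ in its behavior at $z=0$.
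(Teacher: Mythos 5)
Your overall route is the same as the paper's: part (1) via the long exact sequence of $P_2 \to P_3 \to S^6$, part (2) via the commutative square comparing $\pi_6 P_3 \to \pi_6 S^6$ with $\res_{U(1)}^e \colon \pi_6 \TJF_3 \to \pi_0\TMF$, using that $e_\JF$ is an isomorphism and $c = 2 + O(z)$. However, part (1) as written contains a concrete error: $\pi_6 P_2 \neq 0$. From the cofiber sequence $S^0 \to P_2 \to S^4$ one gets the exact sequence $\pi_7 S^4 = \pi_3 S \xrightarrow{\cdot\nu} \pi_6 S \to \pi_6 P_2 \to \pi_6 S^4 = \pi_2 S \xrightarrow{\cdot\nu} \pi_5 S = 0$; the first map kills $\nu^2$, but $\eta^2 \in \pi_2 S$ lifts, so $\pi_6 P_2 \simeq \Z/2$, generated by a lift $\widetilde{\eta^2}$ of $\eta^2$ on the top cell. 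Consequently your claimed sequence ``$0 \to \pi_6 P_2 \to \pi_6 P_3 \to 2\Z \to 0$ with $\pi_6 P_2 = 0$'' is not available, and on its face the LES only bounds $\pi_6 P_3$ between $\Z$ and $\Z \oplus \Z/2$; in particular the injectivity of $\pi_6 P_3 \to \pi_6 S^6$, which your part (2) argument also leans on, is not yet established.

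The missing step is to show that this $\Z/2$ dies in $\pi_6 P_3$, i.e.\ that the boundary map $\pi_7 S^6 = \Z/2\{\eta\} \to \pi_6 P_2$ in the LES for $P_2 \to P_3 \to S^6$ is onto. This is true: the boundary sends $\eta$ to $[\phi]\circ\eta$, where $[\phi] \in \pi_5 P_2 \simeq \Z/2$ is the class of the attaching map of the $6$-cell, which projects to $\eta \in \pi_5 S^4$; hence $[\phi]\circ\eta$ projects to $\eta^2 \neq 0$ in $\pi_6 S^4$, and since $\pi_6 P_2 \to \pi_6 S^4$ is injective, $[\phi]\circ\eta = \widetilde{\eta^2}$ generates $\pi_6 P_2$. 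With that, the image of $\pi_6 P_2$ in $\pi_6 P_3$ vanishes, so $\pi_6 P_3 \hookrightarrow \pi_6 S^6$ with image $2\Z$ (your computation $\pi_5 P_2 \simeq \Z/2$ and $\partial(1) = [\phi] \neq 0$ is correct, modulo the reversed arrows you wrote), and part (1) follows. Once this is repaired, your part (2) is essentially the paper's diagram chase and goes through: both horizontal maps to $\Z$ are injective, $\hat c \mapsto 2$ and $\{c\} \mapsto 2$, so $u(\hat c) = \{c\}$ and the image of $u$ is $\{c\}\cdot\Z$.
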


\begin{proof}
	The first claim can be verified from the long exact sequence of homotopy groups. 
	The second claim follows from the commutative diagram
	\begin{align}\label{diag_P3TJF3}
		\xymatrix@C=5em{
		\pi_6 P_3 \ar@{>->}[r] \ar[d]^-{u} & \pi_6 S^6 \ar@{^{(}->}[d]^-{u} \\
		\pi_6 \TJF_3 = \{c\} \cdot \pi_0 \TMF \ar@{>->}[r]^-{\res_{U(1)}^e}_-{\{c\} \mapsto 2} \ar[d]^-{e_\JF}_-{\simeq} & \pi_0 \TMF \ar[d]^-{e_\MF}_-{\simeq} \\
		\JF_3 |_{\deg = 6} \ar@{>->}[r]^-{\res_{z=0}}_{c \mapsto 2} & \MF|_{\deg = 0}
		}
	\end{align}
	where we have used computation in \cite{Tominaga} and the equation $c = 2 + O(z)[[q]]$ to deduce that the horizontal arrows are injective and the left lower vertical arrow is an isomorphism. 
\end{proof}

By Proposition \ref{prop_c}, we have (see Figure~\ref{fig:cell_TJF3/c})
\begin{align}\label{eq_TJF3/c}
	\cofib(c) \simeq \TMF \otimes (S^0 \cup_\nu S^4 \cup_\eta S^6 \cup_2 S^7). 
\end{align}

\begin{thm}[{Cell structures of $\TMF^{C_2}$ and $\TMF[\pm\lambda]^{C_2}$} ]\label{thm_nontwist_C2}
   Let us define $C$ to be a finite spectrum (see Figure~\ref{fig:cell_C})
    \begin{align}
    	C := \cofib \left( S^{-2} \xrightarrow[\text{\tiny Prop.}\ref{prop_c} \ (2)]{\hat{c}} P_3[-8] \xhookrightarrow{\iota} P_4[-8] \right).
    \end{align}
     We have an equivalence
    \begin{align}
    		\TMF^{C_2} &\simeq \TMF \otimes(S^0 \oplus C), \label{eq_thm_nontwist_C2}\\
    	 \TMF[\lambda]^{C_2} &\simeq \TMF \otimes C, \label{eq_1/8}\\
    	 	\TMF[-\lambda]^{C_2} &\simeq \TMF \otimes D(C), \label{eq_7/8}
    \end{align}
\end{thm}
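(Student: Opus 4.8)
The goal is to rewrite the cofiber description of $\TMF[\lambda]^{C_2}$ as the base change along $\TMF\otimes(-)$ of a cofiber of \emph{finite} spectra, after which smashing with $\TMF$ distributes over the cofiber and gives the stated cell structures. I would start from \eqref{eq_1/8_cofib} (equivalently Proposition~\ref{prop_oddtwisted} with $k=0$),
\[
\TMF[\lambda]^{C_2}\;\simeq\;\cofib\!\left(\TMF[-2]\xrightarrow{\ \chi(\mu^2)\ }\TJF_{4}[-8]\right),
\]
and then analyze the map $\chi(\mu^2)$ at the level of $\TMF$-modules.

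First I would use Proposition~\ref{prop_Chua}(1) to factor $\chi(\mu^2)=\chi(\mu)\cdot\{c\}$, so that as a map of $\TMF$-modules $\chi(\mu^2)\colon\TMF[-2]\to\TJF_4[-8]$ is the composite $\TMF[-2]\xrightarrow{\{c\}}\TJF_3[-8]\xrightarrow{\chi(\mu)\cdot}\TJF_4[-8]$. Next, Fact~\ref{fact_TJF_cellstr} gives $\TJF_3\simeq\TMF\otimes P_3$ and $\TJF_4\simeq\TMF\otimes P_4$, and the commuting square \eqref{eq_attaching_P} identifies the stabilization $\chi(\mu)\cdot$ with $\TMF\otimes\iota$ for the cell inclusion $\iota\colon P_3\hookrightarrow P_4$. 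For the other arrow I would invoke Proposition~\ref{prop_c}(2): the class $\{c\}\in\pi_6\TJF_3$ is the image of $\hat c\in\pi_6 P_3$ under the unit map $\pi_6 P_3\to\pi_6(\TMF\otimes P_3)$; since a $\TMF$-module map out of a free module $\TMF[-2]$ is determined by, and recovered from, its underlying class in $\pi_6(\TMF\otimes P_3)$, and since the unit map sends $\hat c$ precisely to the base change $\id_\TMF\otimes\hat c$, we conclude that $\{c\}\colon\TMF[-2]\to\TJF_3[-8]$ equals $\TMF\otimes\hat c$ with $\hat c\colon S^{-2}\to P_3[-8]$. Composing, $\chi(\mu^2)\simeq\TMF\otimes(\iota\circ\hat c)$, and because $\TMF\otimes(-)$ is exact,
\[
\TMF[\lambda]^{C_2}\;\simeq\;\TMF\otimes\cofib\!\left(S^{-2}\xrightarrow{\ \hat c\ }P_3[-8]\xhookrightarrow{\ \iota\ }P_4[-8]\right)\;=\;\TMF\otimes C,
\]
which is \eqref{eq_1/8}. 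Substituting into the splitting \eqref{eq_TMFC2_dec} then yields $\TMF^{C_2}\simeq\TMF\oplus\TMF[\lambda]^{C_2}\simeq\TMF\otimes(S^0\oplus C)$, i.e.\ \eqref{eq_thm_nontwist_C2}.

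For the $(-\lambda)$-twisted case I would use $\TMF$-linear duality: by \eqref{eq_twisted_dual} together with $\Ad(C_2)=0$, we have $D\!\left(\TMF[\lambda]^{C_2}\right)\simeq\TMF[-\lambda]^{C_2}$; combining with \eqref{eq_1/8} and the standard identity $D_\TMF(\TMF\otimes C)\simeq\TMF\otimes D_S(C)$ for a finite spectrum $C$ gives $\TMF[-\lambda]^{C_2}\simeq\TMF\otimes D(C)$ (reading $D(C)$ as the Spanier--Whitehead dual $D_S(C)$), which is \eqref{eq_7/8}.

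\textbf{Main point.} The genuinely substantive inputs — $a(2z)/a(z)=c$ and the computation of $\pi_6\TJF_3$ behind Propositions~\ref{prop_Chua} and~\ref{prop_c}, and the Bauer--Meier cell structure of Fact~\ref{fact_TJF_cellstr} — are imported; the work here is the bookkeeping that upgrades the homotopy-level equalities to an identification of \emph{maps} of $\TMF$-modules. The step I expect to require the most care is precisely this upgrade: verifying that $\chi(\mu)\cdot$ really is $\TMF\otimes\iota$ (not just on homotopy) via the compatibility \eqref{eq_attaching_P}, and that $\{c\}$, being in the image of the unit by Proposition~\ref{prop_c}(2), forces $\chi(\mu^2)$ to be base-changed from $\iota\circ\hat c$ so that the cofiber commutes with $\TMF\otimes(-)$. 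Once this is in place, exactness and the splitting \eqref{eq_TMFC2_dec} finish the proof, and the cell diagrams of $C$, $S^0\oplus C$, and $D(C)$ (Figures~\ref{fig:cell_C} and~\ref{celldiag_TMFZ/2}) are read off from the attaching maps of $P_3$, $P_4$, and $\hat c$.
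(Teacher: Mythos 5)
Your proposal is correct and follows essentially the same route as the paper: starting from \eqref{eq_1/8_cofib}, factoring $\chi(\mu^2)=\chi(\mu)\cdot\{c\}$ via Proposition~\ref{prop_Chua}, identifying the two factors with $\id_\TMF\otimes\hat c$ and $\id_\TMF\otimes\iota$ via Proposition~\ref{prop_c} and \eqref{eq_attaching_P}, and then concluding \eqref{eq_thm_nontwist_C2} from the splitting \eqref{eq_TMFC2_dec} and \eqref{eq_7/8} from the duality \eqref{eq_duality_TMFCn}. Your added care in upgrading the homotopy-class identifications to identifications of $\TMF$-module maps (and in reading $D(C)$ as the Spanier--Whitehead dual base-changed to $\TMF$) is exactly the bookkeeping the paper's terse proof leaves implicit.
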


\begin{proof}
	For \eqref{eq_1/8},use \eqref{eq_1/8_cofib} and decompose
	\begin{align}
	\chi(\mu^2) \colon	\TMF[-2] \xrightarrow{\{c\}} \TJF_3[-8] \xrightarrow{\chi(\mu) \cdot} \TJF_4[-8] 
	\end{align}
	 using Propositions \ref{prop_Chua}, and identify the above composition with
	 \begin{align}
	 	\TMF \otimes S^{-2} \xrightarrow{\id \otimes \widehat{c}} \TMF \otimes P_3[-8] \xrightarrow{\id \otimes \iota} \TMF \otimes P_4 [-8]
	 \end{align}
	 using \eqref{eq_attaching_P} and \ref{prop_c}. This proves \eqref{eq_1/8}. From this,
	 \eqref{eq_thm_nontwist_C2} follows by \eqref{eq_TMFC2_dec}, and \eqref{eq_7/8} follows by the duality
	 \begin{align}
	 \TMF[-\lambda]^{C_2} \simeq D(\TMF[\lambda]^{C_2})
	 \end{align}
	 by \eqref{eq_duality_TMFCn}. This completes the proof. 
	 
\end{proof}

We can recover Chua's result from this Theorem.

\begin{cor}[{=\cite[Theorem 1.1]{chua}}]\label{CorChua}
    After $2$-localization, we have
    \begin{align}
    	C_{(2)} \simeq S^0 \oplus \cofib(c)[-8]. 
    \end{align}
  Thus we have an equivalence $$\TMF^{C_2}_{(2)} \simeq \TMF_{(2)} \otimes (S^0 \oplus S^0 \oplus \cofib(c)[-8])$$. 
\end{cor}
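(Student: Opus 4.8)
The plan is to localize the finite spectrum $C$ at the prime $2$ and show that the attaching map defining it splits off a copy of $S^0$. Recall that $C = \cofib(S^{-2} \xrightarrow{\hat c} P_3[-8] \hookrightarrow P_4[-8])$, and that $P_4/P_3 \simeq S^8$ with the attaching data $P_4 \simeq S^0 \cup_\nu S^4 \cup_{\eta\oplus 2\nu}(S^6\oplus S^8)$ from \eqref{eq_P4}. First I would observe that the composite $S^{-2}\xrightarrow{\hat c}P_3[-8]\hookrightarrow P_4[-8]$ factors through $P_3[-8]$, and by Proposition \ref{prop_c}(1) the image of $\hat c$ in $\pi_6 P_3/P_2 \simeq \pi_6 S^6 = \Z$ is $2$. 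The key point is that $2$-locally the generator $2\in\pi_6 S^6$ (the degree-$2$ self-map of $S^6$) has a preimage structure: after $2$-localization, $P_4[-8]$ acquires the $8$-cell attached along $\eta\oplus 2\nu$, and the $2$-component of this attaching map lets us split the bottom $S^0$ off $C_{(2)}$.

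Concretely, the second step is to analyze the cofiber sequence $C \to \Sigma C \to \cdots$ together with the cofiber sequence $S^{-2}\to P_4[-8]\to C$. Mapping the $8$-cell of $P_4[-8]$ (i.e.\ $S^0$ after the shift) appropriately: since $\hat c$ hits $2\in\pi_6 S^6$ and $P_4$ contains $S^6\cup_2 S^7$ as a subquotient built from the $2\nu$ attaching map onto the $S^8$, one gets that $2$-locally the map $S^{-2}\xrightarrow{\hat c} P_4[-8]$ becomes null on the quotient $P_4[-8]/P_2[-8] \simeq (S^6\oplus S^8)[-8]$ modulo the relevant Toda-bracket indeterminacy, producing the splitting $C_{(2)} \simeq S^0 \oplus \cofib(\hat c\colon S^{-2}\to P_3[-8]/(\text{bottom cell}))$. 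The point is that $\cofib(c)$, as computed in \eqref{eq_TJF3/c}, is exactly $\cofib(\hat c\colon S^{-2}\to P_3)$ after quotienting by the bottom $S^0$ of $P_3$, which is $S^0\cup_\nu S^4\cup_\eta S^6\cup_2 S^7$ minus its bottom cell — wait, more carefully: $\cofib(c)\simeq \TMF\otimes(S^0\cup_\nu S^4\cup_\eta S^6\cup_2 S^7)$ and one checks directly that $C_{(2)}$'s reduced part (after removing the split $S^0$) is $\cofib(c)[-8]$ by comparing attaching maps via the commutative square \eqref{diag_P3TJF3}.

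The third step is bookkeeping: once $C_{(2)}\simeq S^0\oplus\cofib(c)[-8]$, tensoring \eqref{eq_thm_nontwist_C2} with $\TMF_{(2)}$ gives $\TMF^{C_2}_{(2)}\simeq\TMF_{(2)}\otimes(S^0\oplus C_{(2)})\simeq\TMF_{(2)}\otimes(S^0\oplus S^0\oplus\cofib(c)[-8])$, which is precisely Chua's statement. I expect the main obstacle to be \emph{pinning down the $2$-local splitting of $C$ itself} — i.e.\ showing that the composite attaching map $S^{-2}\to P_4[-8]$, which is nontrivial integrally (its projection to the top $S^{-2}$-reduced-to-$S^6$ quotient is $2$, not $0$), nonetheless splits off the bottom cell $2$-locally because $2$ becomes a non-zero-divisor-style phenomenon interacting with the $\eta\oplus 2\nu$ attaching structure of $P_4$. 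This requires a careful Toda-bracket or Atiyah–Hirzebruch argument on $\pi_* P_4$ rather than a formal manipulation, and is the only genuinely computational input; everything else follows from Theorem \ref{thm_nontwist_C2} and \eqref{eq_TJF3/c}.
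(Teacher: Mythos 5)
Your overall shape---$2$-localize, split a degree-zero cell off $C$, then smash Theorem \ref{thm_nontwist_C2} with $\TMF_{(2)}$---is the right one, and your final bookkeeping paragraph is fine. The middle step, where the splitting actually has to be produced, is where the proposal breaks down. Two concrete problems: (i) the claim that $S^{-2}\xrightarrow{\hat c}P_4[-8]$ ``becomes null on the quotient $P_4[-8]/P_2[-8]\simeq S^{-2}\oplus S^{0}$'' is false $2$-locally, since by Proposition \ref{prop_c} the component into the $S^{-2}$ factor is the degree-$2$ map (the $S^{0}$ component lies in $\pi_{-2}S=0$, so there the claim is vacuous); (ii) the asserted conclusion $C_{(2)}\simeq S^{0}\oplus\cofib\bigl(\hat c\colon S^{-2}\to P_3[-8]/(\text{bottom cell})\bigr)$ cannot be right: it has four cells while $C$ has five. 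Also, the cell that splits off is the \emph{top} (degree-zero) cell of $C$, coming from the $8$-cell of $P_4[-8]$; what must be shown is that its attaching map, an element of $\pi_{-1}$ of the subcomplex $\cofib(\hat c\colon S^{-2}\to P_3[-8])\simeq S^{-8}\cup_\nu S^{-4}\cup_\eta S^{-2}\cup_2 S^{-1}$, vanishes $2$-locally. Once that is done, the complementary summand is by construction $\cofib(\hat c\colon S^{-2}\to P_3[-8])$, whose $\TMF$-smash is $\cofib(c)[-8]$ by \eqref{eq_TJF3/c}; no further ``comparison of attaching maps'' is needed.

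The missing idea---which is the entire content of the paper's one-line proof---is \emph{why} that attaching map dies. It is not a feature of the $\eta\oplus 2\nu$ structure of $P_4$ by itself: the top cell of $P_4$ (equivalently of $\TJF_4$) does not split off, $2$-locally or otherwise, because its attaching map $2\nu$ is nonzero. The splitting is caused by the new $(-1)$-cell of $C$ created by coning off $\hat c$, which by Proposition \ref{prop_c} is attached to the $(-2)$-cell by degree $2$. The top cell of $C$ is attached along the image of $2\nu\in\pi_{-1}S^{-4}$, and the key computation (the paper's stated input) is that $2$-locally $2\nu$ maps to zero under the bottom-cell inclusion $S^{-4}\to S^{-4}\cup_\eta S^{-2}\cup_2 S^{-1}$: the lift of $2$ across the $\eta$-cofiber, multiplied by $\eta$, represents the Toda bracket $\langle\eta,2,\eta\rangle\ni 2\nu$ (modulo $\eta^3=12\nu$, which already dies in $S^{-4}\cup_\eta S^{-2}$). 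Your proposal defers exactly this point to ``a careful Toda-bracket or Atiyah--Hirzebruch argument on $\pi_*P_4$'', but no computation in $\pi_*P_4$ can suffice, since the splitting genuinely uses the cone on $\hat c$; without naming this fact the proof is not there. (One further caution if you insist on a splitting of finite spectra rather than of $\TMF$-modules: the argument kills the attaching map only in the quotient by the bottom $S^{-8}$-cell, leaving a possible lift in $\pi_{-1}S^{-8}_{(2)}\simeq\Z/16\{\sigma\}$; after smashing with $\TMF_{(2)}$ this ambiguity disappears because $\pi_7\TMF=0$, which is all the corollary's second statement requires.)
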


\begin{rem}\label{rem_computation_DL}
	In \cite{chua}, the $2$-local cell complex $\cofib(c)_{(2)} [-8]$ is denoted by $DL$ , and the homotopy groups of $\TMF\otimes DL$ are computed. 
\end{rem}

\begin{proof}
	This follows from the fact that, $2$-locally, the map
	\begin{align}
		\pi_{-1} S^{-4} \to \pi_{-4} \left( S^{-4} \cup_\eta S^{-2} \cup_2 S^{-1}\right) 
	\end{align}
	induced by the inclusion of the bottom cell sends $2\nu \in \pi_{-1}S^{-4}$ to zero. 
\end{proof}

\begin{figure}[h]
	\centering
	\begin{tikzpicture}[scale=0.5]
		\begin{celldiagram}
			\nu{0} \eta{4} \two{6}
			\n{0} \n{4}  \n{6} \n{7}  
			\foreach \y in {0,4,6,7} {
				\node [left] at (-1, \y) {$\y$};
			}
		\end{celldiagram}
		\node [right, red] at (1, 5) {$\eta$}; 
		\node [right, green!50!black] at (1, 2) {$\nu$};
		\node [blue] at (1, 6.5) {$2$};
	\end{tikzpicture}
	\caption{The cell diagram of $\cofib(c)$.}\label{fig:cell_TJF3/c}
\end{figure}
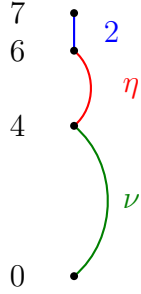

\begin{figure}[h]
  \centering
  \begin{tikzpicture}[scale=0.5]
     \begin{celldiagram}
     \nu{-8} \eta{-4} \eightnu{-4} \two{-2}
       \n{-8} \n{-4}  \n{-2} \n{-1}  \n{0} 
        \foreach \y in {-8, -4, -2, -1, 0} {
        \node [left] at (-1, \y) {$\y$};
      }
      \end{celldiagram}
       \node [left, orange] at (-1, -3) {$2\nu$};
       \node [right, red] at (1, -3) {$\eta$}; 
       \node [right, green!50!black] at (1, -6) {$\nu$};
       \node [blue] at (1, -1.5) {$2$};
      \end{tikzpicture}
     \caption{The cell structure of $C$.}\label{fig:cell_C}
\end{figure}
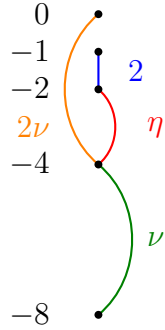

\subsection{$k\lambda$-twisted cases for $2 \le k \le 6$: the $C_2$ level-rank duality isomorphisms}\label{sec_twisted_C2}

In this subsection, we determine the structure of the $\RO(C_2)$-graded $\TMF$.
Combining the fiber sequence in Section~\ref{subsec_twisted} and ideas from the previous work \cite{lin2024topologicalellipticgenerai} by two of the authors, 
we prove the {\it level-rank duality} statement between $C_2 = O(1)$-equivariant $\TMF$ and $\Spin(k)$-equivariant $\TMF$ for small $k$, and as a result we compute the $RO(C_2)$-graded $\TMF$. 
Due to the periodicity
\begin{align}
	\TMF[(8+n)\overline\lambda]^{C_2} \simeq \Sigma^8 \TMF [n\overline\lambda]^{C_2}, 
\end{align}
it is sufficient to determine the structure of $\TMF[k\lambda]^{C_2}$ for $1\le k \le 7$. 

To state the result, we recall maps relating $C_2 = O(1)$-equivariant $\TMF$ with the $\Spin(k)$-equivariant $\TMF$ which was introduced in \cite[Definition~3.40 and (4.15)]{lin2024topologicalellipticgenerai}. For each $1 \le k \le 6$, we have a map of $\TMF$-module spectra
\begin{align}
	\cF_{(C_2)_k} \colon \TMF \to \TMF[kV_{C_2}]^{C_2} \otimes_\TMF \TMF[\overline V_{\Spin(k)}]^{\Spin(k)} = \TMF[k\lambda]^{C_2} \otimes_\TMF \TMF[\overline V_{\Spin(k)}]^{\Spin(k)}, 
\end{align} 
where $V_{C_2} \in \RO(C_2)$ and $V_{\Spin(k)} \in \RO(\Spin(k))$ are the orthogonal vector representations\footnote{For $\Spin(k)$, we are not using the spinor representation but the vector representation, i.e., the representation that factors through $\Spin(k) \to SO(k)$.}. 
It is defined as the composition
\begin{align}\label{eq_levelrank_coev}
	\cF_{(C_2)_k} \colon \TMF \xrightarrow{\chi(V_{\phi_k})} \TMF[V_{\phi_k}]^{C_2 \times \Spin(k)} \stackrel{\sigma(\Theta_k, \fraks)}{ \simeq } \TMF[kV_{C_2}]^{C_2} \otimes_\TMF \TMF[\overline V_{\Spin(k)}]^{\Spin(k)}, 
\end{align}
where we set
\begin{align}
	V_{\phi_k} := V_{C_2} \otimes_\R V_{\Spin(k) } \in \RO(C_2 \times \Spin(k)). 
\end{align}
In other words, $V_{\phi_k}$ has the underlying vector space $\R^k$ with commuting action by $C_2$ and $\Spin(k)$ given by the sign and the vector representations. 
The isomorphism $\sigma(\Phi_k, \fraks)$ is the equivariant Thom isomorphism in $\TMF$. The detailed explanation is in Remark \ref{rem_sigma} below. 
By the dualizability of equivariant $\TMF$, it is equivalently regarded as the map
\begin{align}\label{eq_levelrank}
	\cF'_{(C_2)_k} \colon  D(\TMF[\overline V_{\Spin(k)}]^{\Spin(k)}) \to \TMF[k\lambda]^{C_2} , 
\end{align}
which we call the {\it $(C_2)_k$ level-rank duality morphism}. 
Observe that the domain of \eqref{eq_levelrank} can be written by using the exceptional isomorphisms of spin groups and sigma orientations, as follows: 
\begin{prop}
	Using the isomorphisms of groups, 
		\begin{align}\label{eq_spin_grp}
		U(1) \stackrel{\varrho_2}{\simeq} \Spin(2), \   Sp(1),\stackrel{\varrho_3}{\simeq} \Spin(3),  \   Sp(1) \times Sp(1) \stackrel{\varrho_4}{\simeq}  \Spin(4), 
		Sp(2)  \stackrel{\varrho_5}{\simeq} \Spin(5), \   SU(4)\stackrel{\varrho_6}{\simeq}  \Spin(6)
	\end{align}
	we have the following Thom isomorphisms in equivariant $\TMF$:
	\begin{align}
		\TMF[\overline{V}_{\Spin(2)}]^{\Spin(2)}& \simeq \TMF[4\overline \mu]^{U(1)} = \TJF_4[-8], \label{eq_thom_spin2}\\
		\TMF[\overline{V}_{\Spin(3)}]^{\Spin(3)} &\simeq \TMF[2\overline{V}_{Sp(1)}]^{Sp(1)} = \TEJF_4[-8], \label{eq_thom_spin3}\\
			\TMF[\overline{V}_{\Spin(4)}]^{\Spin(4)} & \simeq \TMF[\overline V_{Sp(1)_L} \oplus \overline V_{Sp(1)_R}]^{Sp(1)_L \times Sp(1)_R} = \TEJF_2 \otimes_\TMF \TEJF_2[-4], \label{eq_thom_spin4}\\
				\TMF[\overline{V}_{\Spin(5)}]^{\Spin(5)} &\simeq \TMF[\overline{V}_{Sp(2)}]^{Sp(2)}, \label{eq_thom_spin5} \\
					\TMF[\overline{V}_{\Spin(6)}]^{\Spin(6)} &\simeq \TMF[\overline{V}_{SU(4)}]^{SU(4)}. \label{eq_thom_spin6}
	\end{align}
	In \eqref{eq_thom_spin4}, we wrote $Sp(1)_L \times Sp(1)_R = Sp(1) \times Sp(1)$ to distinguish the two copies of $Sp(1)$. 
\end{prop}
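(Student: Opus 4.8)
The plan is to deduce each of \eqref{eq_thom_spin2}--\eqref{eq_thom_spin6} by transporting the twist across the exceptional group isomorphism $\varrho_k\colon G_k\xrightarrow{\simeq}\Spin(k)$ (with $G_2=U(1)$, $G_3=Sp(1)$, $G_4=Sp(1)\times Sp(1)$, $G_5=Sp(2)$, $G_6=SU(4)$) and then invoking an equivariant Thom isomorphism. First I would use the functoriality of genuine equivariant $\TMF$ under Lie group isomorphisms to rewrite $\TMF[\overline{V}_{\Spin(k)}]^{\Spin(k)}\simeq\TMF[\varrho_k^*\overline{V}_{\Spin(k)}]^{G_k}$, so that the task reduces to (i) identifying the pullback $\varrho_k^*V_{\Spin(k)}$ as a $G_k$-representation, and (ii) recognizing $\varrho_k^*\overline{V}_{\Spin(k)}$ as differing from the appropriate ``barred'' target representation by a virtual representation that admits a string structure, so that the $G_k$-equivariant sigma orientation produces the asserted $\TMF$-module equivalence. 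The right-hand sides $\TJF_4[-8]$, $\TEJF_4[-8]$, $\TEJF_2\otimes_\TMF\TEJF_2[-4]$, $\TMF[\overline{V}_{Sp(2)}]^{Sp(2)}$, $\TMF[\overline{V}_{SU(4)}]^{SU(4)}$ are then just the defining notation for the relevant $G_k$-equivariant twists (with a suspension shift read off from $\dim V_{\Spin(k)}$ versus the target dimension), the only non-formal such rewriting being the $k=4$ external-product decomposition $\TMF[\overline{V}_{Sp(1)_L}\oplus\overline{V}_{Sp(1)_R}]^{Sp(1)_L\times Sp(1)_R}\simeq\TEJF_2\otimes_\TMF\TEJF_2[-4]$, which I would obtain from the description of $Sp(1)\times Sp(1)$-equivariant $\TMF$.

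For step (i) I would record the standard restrictions: $\varrho_2^*V_{\Spin(2)}\simeq\mu^2=\res_{\varphi_2}\mu$, because the covering $\Spin(2)\to SO(2)$ is the squaring map; $\varrho_3^*V_{\Spin(3)}\simeq\Ad(Sp(1))$, the three-dimensional adjoint representation; $\varrho_4^*V_{\Spin(4)}$ is the four-dimensional real representation in which $(q_1,q_2)$ acts on a quaternion $x$ by $x\mapsto q_1 x\overline{q_2}$, restricting to $V_{Sp(1)}$ on each factor; $\varrho_5^*V_{\Spin(5)}$ is the real form of the trace-free summand of $\Lambda^2$ of the defining four-dimensional complex representation of $Sp(2)$; and $\varrho_6^*V_{\Spin(6)}$ is the real form of $\Lambda^2\C^4$ for $SU(4)$. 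With this in hand, the case $k=2$ is immediate: $\TMF[\overline{V}_{\Spin(2)}]^{\Spin(2)}\simeq\TMF[\mu^2-2]^{U(1)}\simeq\TMF[4\mu-8]^{U(1)}=\TJF_4[-8]$ by \eqref{eq_sigma_prelim} (equivalently, by Lemma~\ref{lem_twistn2} and Fact~\ref{fact_sigma_U(1)}).

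For step (ii) I would compare twist classes in $[BG_k,P^4 BO]$. Since $Sp(1)$, $Sp(1)\times Sp(1)$, $Sp(2)$ and $SU(4)$ are simply connected, their Stiefel--Whitney obstructions vanish, and the class of a virtual representation of virtual dimension zero in $[BG_k,P^4 BO]$ is detected by its first Pontryagin class in $H^4(BG_k;\Z)$. A short Chern-root computation then gives $p_1(\Ad(Sp(1)))=-4u=p_1(2V_{Sp(1)})$, $p_1(\varrho_4^*V_{\Spin(4)})=-2(u_L+u_R)=p_1(V_{Sp(1)_L}\oplus V_{Sp(1)_R})$, $p_1(\varrho_5^*V_{\Spin(5)})=-2c_2=p_1(V_{Sp(2)})$, and $p_1(\varrho_6^*V_{\Spin(6)})=-2c_2=p_1(V_{SU(4)})$ (with $u$, $u_L$, $u_R$, $c_2$ the respective degree-$4$ generators). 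Hence in each case $\varrho_k^*\overline{V}_{\Spin(k)}$ and the barred target agree in $[BG_k,P^4 BO]$, their difference carries a (unique) string structure, and the $G_k$-equivariant sigma orientation yields the equivalence; for $k=5,6$, where the target is itself a $G_k$-equivariant twist, the equivalence is simply the Thom isomorphism along this genuinely-string virtual representation.

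The main obstacle is not the Pontryagin-class bookkeeping but the \emph{availability} of the equivariant sigma orientation: the genuine equivariant sigma orientation is not established in general, so I would invoke the partial results of \cite[Section~2.3]{lin2024topologicalellipticgenerai}, which cover exactly the groups $Sp(1)$, $Sp(1)\times Sp(1)$, $Sp(2)$ and $SU(4)$ needed here (the $U(1)$ case being Fact~\ref{fact_sigma_U(1)}). A secondary care point is tracking the suspension shifts in \eqref{eq_thom_spin2}--\eqref{eq_thom_spin6}: $\overline{V}_{\Spin(k)}$ has virtual dimension zero while the target representations $4\mu$, $2V_{Sp(1)}$, $V_{Sp(1)_L}\oplus V_{Sp(1)_R}$ do not, so each $[-8]$ or $[-4]$ must be followed through the dimensions of the representations and, in the $k=4$ case, through the product structure of $Sp(1)\times Sp(1)$-equivariant $\TMF$.
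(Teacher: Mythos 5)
Your proposal is correct and follows essentially the same route as the paper: identify the pullbacks of the vector representations along the exceptional isomorphisms, check that the resulting twists agree in $[BG, P^4BO]$, and invoke the partial genuine equivariant sigma orientation of \cite[Section~2.3]{lin2024topologicalellipticgenerai} (with the $U(1)$ case handled via Lemma~\ref{lem_twistn2} and Fact~\ref{fact_sigma_U(1)}). The only difference is cosmetic: the paper phrases the $k=5,6$ cases through the (half-)spin representations of $\Spin(5)$, $\Spin(6)$ and verifies the equalities in $H^4(B\Spin(k);\Z)$ by restricting to maximal tori, whereas you work on the $Sp(2)$ and $SU(4)$ side via $\Lambda^2$ and compute first Pontryagin classes explicitly — an equivalent computation (legitimate here since $H^4(BG;\Z)$ is torsion-free, so comparing $p_1$ is the same as comparing $\lambda = p_1/2$).
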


\begin{proof}
	Notice the equivalences of representations
	\begin{align}
		\res_{\varrho_2} (V_{\Spin(2)}) &\simeq \mu^2  \quad \mbox{in} \  \RO(U(1)), \\
			\res_{\varrho_3} ( V_{\Spin(3)} ) &\simeq \Ad(Sp(1)) \quad \mbox{in} \ \RO(Sp(1)), \\
				\res_{\varrho_4} (V_{\Spin(4)}) &\simeq V_{Sp(1)_L} \otimes V_{Sp(1)_R} \quad  \mbox{in} \ \RO(Sp(1)_L \times Sp(1)_R)  , \\
					\res_{\varrho_5^{-1}} (V_{Sp(2)}) &\simeq  \slashed S_{\Spin(5)}, \quad \mbox{in} \ \RO(\Spin(5)),\\
					\res_{\varrho_6^{-1}} (V_{SU(4)}) &\simeq \slashed S_{\Spin(6)}^+ \quad \mbox{in} \ \RO(\Spin(6)),
	\end{align}
	where we have denoted by $\slashed S_{\Spin(5)}$ and $\slashed S_{\Spin(6)}^+$ the (half)-spin representations. Then the isomorphisms \eqref{eq_thom_spin2}--\eqref{eq_thom_spin6} are given by the genuine equivariant sigma orientation of $\TMF$ \cite[Section~2.3]{lin2024topologicalellipticgenerai}.
	We claim the following equalities:
	\begin{align}
		[\mu^2] &= 4[\mu]   && \in [BU(1), P^4BO], \label{eq_proof_thom_spin2}\\
		[\Ad(Sp(1))] &= 2[V_{Sp(1)}]  && \in [BSp(1), P^4BO], \label{eq_proof_thom_spin3}\\
		[V_{Sp(1)_L} \otimes V_{Sp(1)_R}] &= [V_{Sp(1)_L} \oplus V_{Sp(1)_R}] && \in [BSp(1)_L \times BSp(1)_R, P^4BO], \label{eq_proof_thom_spin4}\\
		[ \slashed S_{\Spin(5)}] &= [V_{\Spin(5)}]  && \in [B\Spin(5), P^4BO],  \label{eq_proof_thom_spin5}\\
			[ \slashed S^+_{\Spin(6)}] &= [V_{\Spin(6)}]  && \in [B\Spin(6), P^4BO].  \label{eq_proof_thom_spin6}
	\end{align}
	Here, \eqref{eq_proof_thom_spin2} follows Lemma~\ref{lem_twistn2}, and the rest can be checked directly. The easiest way is to use $[B\Spin(k), P^4BO] \simeq H^4(B\Spin(k); \Z)$ for $k \ge 3$, and then the equivalence is checked by restricting the representations to the maximal tori. We leave the details to the reader. 
\end{proof}

\begin{thm}[{Structures of $\RO(C_2)$-graded $\TMF$}]\label{thm_C2_twisted}
	~
	\begin{enumerate}
		\item 
		For each $2 \le k \le 6$, the level-rank duality morphism \eqref{eq_levelrank} is an isomorphism of $\TMF$-module spectra, 
		\begin{align}\label{eq_levelrank_isom}
			\cF'_{(C_2)_k} \colon  D(\TMF[\overline V_{\Spin(k)}]^{\Spin(k)}) \simeq \TMF[k\lambda]^{C_2} .  
		\end{align}
		\item Using the isomorphisms, we can further rewrite the equivalence \eqref{eq_levelrank_isom} in terms of $\TJF$ and $\TEJF$ as follows:
	\begin{align}
		\TMF[2\lambda]^{C_2} &\stackrel{\cF'_{(C_2)_2}}{\simeq} D(\TMF[\overline{V}_{\Spin(2)}]^{\Spin(2)}) \stackrel{\eqref{eq_thom_spin2}}{\simeq} D(\TJF_4)[8] \stackrel{\eqref{eq_twisted_dual}}{\simeq} \TJF_{-4}[9] \label{eq_2/8} \\
			\TMF[3\lambda]^{C_2}  &\stackrel{\cF'_{(C_2)_3}}{\simeq} D(\TMF[\overline{V}_{\Spin(3)}]^{\Spin(3)}) \stackrel{\eqref{eq_thom_spin3}}{\simeq} D(\TEJF_4)[8] \stackrel{\eqref{eq_twisted_dual}}{\simeq} \TEJF_{-8}[13] \label{eq_3/8} \\
			\TMF[4\lambda]^{C_2}  &\stackrel{\cF'_{(C_2)_4}}{\simeq} D(\TMF[\overline{V}_{\Spin(4)}]^{\Spin(4)}) \stackrel{\eqref{eq_thom_spin4}}{\simeq} D(\TEJF_2 \otimes_{\TMF} \TEJF_2)[8]\label{eq_4/8}  \\
		 & 	\qquad \qquad \stackrel{\cF'_{Sp(1)_1} \otimes \cF'_{Sp(1)_1}}{\simeq}  \TEJF_2 \otimes_\TMF \TEJF_2 \\
				\TMF[5\lambda]^{C_2}  &\stackrel{\cF'_{(C_2)_5}}{\simeq} D(\TMF[\overline{V}_{\Spin(5)}]^{\Spin(5)}) \stackrel{\eqref{eq_thom_spin5}}{\simeq} D(\TMF[\overline{V}_{Sp(2)}]^{Sp(2)}) \stackrel{\cF'_{Sp(1)_2}}{\simeq} \TEJF_4 \label{eq_5/8} \\
			\TMF[6\lambda]^{C_2}  &\stackrel{\cF'_{(C_2)_6}}{\simeq} D(\TMF[\overline{V}_{\Spin(6)}]^{\Spin(6)})\stackrel{\eqref{eq_thom_spin6}}{\simeq} D(\TMF[\overline{V}_{SU(4)}]^{SU(4)}) \stackrel{\cF'_{U(1)_4}}{\simeq}  \TJF_4. \label{eq_6/8}
	\end{align}
	In \eqref{eq_4/8}, \eqref{eq_5/8} and \eqref{eq_6/8}, we used the level-rank duality isomorphisms for $U/SU$ and $Sp/Sp$ proven in \cite[Section~6]{lin2024topologicalellipticgenerai}.
	See Figure~\ref{celldiag_TMFZ/2} for the cell diagrams.  
	\item The fiber sequence \eqref{seq_twistedZn_TJF} for $n=2$ and the isomorphism \eqref{eq_2/8} are related by the following commutative diagram:
	\begin{align}\label{diag_k=2}
		\xymatrix{
		\TMF[2\lambda-1]^{C_2} \ar[r] &	\TJF_{-3}[6] \ar[r]^-{\chi(\mu^2) \cdot }  & \TJF_1 \ar[rr]^-{\res_{U(1)}^{C_2}}  && \TMF[2\lambda]^{C_2} \\
		D(\TJF_4)[7] \ar[r]^-{\chi(\mu)\cdot} \ar[u]^-{\eqref{eq_2/8}}_-{\simeq}&	D(\TJF_3)[7] \ar[r] \ar[u]^-{\eqref{eq_duality_TJF}}_-{\simeq}& D(\TMF) = \TMF  \ar[rr]^-{D(\res_{U(1)}^e)}\ar[u]^-{\eqref{eq_TJF_1}}_-{\simeq} & & D(\TJF_4)[8]\ar[u]^-{\eqref{eq_2/8}}_-{\simeq}
	}
	\end{align}

	\item The following diagrams also commute:
	\begin{align}\label{diag_234}
		\xymatrix@C=7em{
		\TMF[2\lambda]^{C_2} \ar[r]^-{\chi(\lambda)\cdot } & \TMF[3\lambda]^{C_2} \ar[r]^-{\chi(\lambda)\cdot } & \TMF[4\lambda]^{C_2} \\
		D(\TJF_4)[8] \ar[r]^-{D(\res_{Sp(1)}^{U(1)})} \ar[u]^-{\eqref{eq_2/8}}_{\simeq}& D(\TEJF_4)[8]\ar[r]^-{D({\rm multi})} \ar[u]^-{\eqref{eq_3/8}}_{\simeq} & D(\TEJF_2 \otimes_\TMF \TEJF_2) [8] \ar[u]^-{\eqref{eq_4/8}}_{\simeq}
		}
	\end{align}

	\begin{align}\label{diag_456}
			\xymatrix@C=7em{
			\TMF[4\lambda]^{C_2} \ar[r]^-{\chi(\lambda)\cdot } & \TMF[5\lambda]^{C_2} \ar[r]^-{\chi(\lambda)\cdot } & \TMF[6\lambda]^{C_2} \\
			\TEJF_2 \otimes_\TMF \TEJF_2 \ar[r]^-{{\rm multi}} \ar[u]^-{\eqref{eq_4/8}}_{\simeq} 
			& \TEJF_4 \ar[r]^-{\res_{Sp(1)}^{U(1)}} \ar[u]^-{\eqref{eq_5/8}}_{\simeq} & \TJF_4 \ar[u]^-{\eqref{eq_6/8}}_{\simeq}
			}
	\end{align}
	Here, $\mathrm{multi}$ is the multiplication in the graded ring $\oplus_{k \in \Z} \TEJF_{2k}$. 

	\end{enumerate}
\end{thm}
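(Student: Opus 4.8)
The plan is to prove part~(1) first in the base case $k=2$, then propagate through $k=3,4,5,6$ by stabilization along the Euler class $\chi(\lambda)$; parts~(2), (3) and (4) then follow formally.

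For $k=2$, I would start from the $n=2$ instance of the fiber sequence \eqref{seq_twistedZn_TJF}, taking the parameter ``$k$'' there to be $-3$ and rewriting the two outer $C_2$-terms via the $8$-periodicity $\TMF[8\overline\lambda]^{C_2}\simeq\Sigma^8\TMF^{C_2}$ of Proposition~\ref{prop_sigma_Cp}; this yields
\begin{align*}
\TMF[2\lambda-1]^{C_2}\to\TJF_{-3}[6]\xrightarrow{\chi(\mu^2)}\TJF_1\to\TMF[2\lambda]^{C_2}.
\end{align*}
Now $\TJF_1\simeq\TMF$ by \eqref{eq_TJF_1} and $\TJF_{-3}[6]\simeq D(\TJF_3)[7]$ by \eqref{eq_duality_TJF}, while $\chi(\mu^2)=\chi(\mu)\cdot\{c\}$ by Proposition~\ref{prop_Chua}; using the compatibility of the graded multiplication on $\{\TJF_k\}$ with $\TMF$-linear duality (Fact~\ref{fact_dualizability_TMF} and Proposition~\ref{prop_duality_seq}), the middle map is identified with $D$ of $\chi(\mu)\cdot\colon\TJF_3\to\TJF_4$, so the whole sequence is the $\TMF$-linear dual of the stabilization--restriction sequence \eqref{eq_TJF_stabres} for $\TJF_4$, rotated and shifted by $[7]$. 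Since the comparison is an equivalence on the first two terms, the five lemma gives an equivalence $D(\TJF_4)[8]\simeq\TMF[2\lambda]^{C_2}$; this is exactly the diagram \eqref{diag_k=2} of part~(3), and it establishes \eqref{eq_2/8}. To see that this equivalence \emph{is} the level-rank morphism $\cF'_{(C_2)_2}$, one checks that the Euler class $\chi(V_{\phi_2})$ defining $\cF_{(C_2)_2}$ in \eqref{eq_levelrank_coev} maps, under the sigma orientation and the exceptional isomorphism $U(1)\simeq\Spin(2)$, to the class $\chi(\mu^2)$ above; this uses Fact~\ref{fact_sigma_U(1)} and Lemma~\ref{lem_twistn2} to track twists across the homomorphisms $C_2\hookrightarrow U(1)$ and $\varphi_2\colon U(1)\to U(1)$.

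For the inductive step $k\to k+1$ with $3\le k+1\le 6$, I would use that the level-rank morphisms $\cF'_{(C_2)_k}$ are natural with respect to stabilization: on the $C_2$-side stabilization is multiplication by $\chi(\lambda)$, whose cofiber is $\TMF[k]$ by \eqref{eq_C2_stabres}, and on the $\Spin$-side it is the restriction/multiplication map appearing in the bottom rows of \eqref{diag_234} and \eqref{diag_456}, whose cofiber is the same shift of $\TMF$ with the comparison on cofibers being the canonical one (this is the place where the identity $V_{\phi_{k+1}}|_{C_2\times\Spin(k)}\simeq V_{\phi_k}\oplus\lambda$ of representations and the compatibility of the equivariant sigma orientations enter). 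Granting that $\cF'_{(C_2)_2}$ is an equivalence, the five lemma applied successively to these commuting squares forces $\cF'_{(C_2)_3},\dots,\cF'_{(C_2)_6}$ to be equivalences, which proves part~(1) in full and simultaneously establishes the commutativity of \eqref{diag_234} and \eqref{diag_456}, i.e.\ part~(4).

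Part~(2) is then formal: combine the equivalences of part~(1) with the Thom isomorphisms \eqref{eq_thom_spin2}--\eqref{eq_thom_spin6} of the preceding proposition and with the $U/SU$ and $Sp/Sp$ level-rank duality isomorphisms of \cite[Section~6]{lin2024topologicalellipticgenerai}, and read off the cell diagrams of Figure~\ref{celldiag_TMFZ/2} from the cell structures in Facts~\ref{fact_TJF_cellstr} and~\ref{fact_cellstr_TEJF} together with the duality \eqref{eq_duality_TJF}. I expect the main obstacle to be the base case: identifying the abstract equivalence $D(\TJF_4)[8]\simeq\TMF[2\lambda]^{C_2}$ produced by the fiber sequence with the geometrically-defined level-rank morphism $\cF'_{(C_2)_2}$ — that is, showing that the Euler-class-plus-sigma-orientation recipe \eqref{eq_levelrank_coev} is compatible with the fiber sequence \eqref{seq_twistedZn_TJF}. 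Everything after that is a five-lemma propagation along the commuting squares of parts~(3) and~(4).
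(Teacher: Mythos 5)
Your route is the same as the paper's: prove the $k=2$ case by comparing the $n=2$, $k=-3$ twisted fiber sequence with the $\TMF$-linear dual of the stabilization--restriction sequence for $\TJF_4$, identify the resulting equivalence with $\cF'_{(C_2)_2}$ by tracking the Euler class $\chi(V_{\phi_2})$ through the sigma orientation and the isomorphism $U(1)\simeq\Spin(2)$, then propagate to $3\le k\le 6$ using compatibility of the level-rank morphisms with the stabilization--restriction sequences \eqref{eq_C2_stabres} and \eqref{eq_stabres_spin} and the five lemma, with part (2) a formal rewriting. However, two steps that you treat as formal are genuine gaps. First, in the base case the identification of the middle map is not a consequence of the duality-compatibility of the graded multiplication alone. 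After dualizing, what has to be shown is that the composite $\TMF[7]\simeq\TJF_{-1}[6]\xrightarrow{\chi(\mu^2)\cdot}\TJF_3$ is exactly the transfer element $\gamma\in\pi_7\TJF_3\simeq\Z/6$ (your phrasing ``the middle map is $D$ of $\chi(\mu)$'' conflates it with the adjacent map; the relevant one is $D(\gamma)$). If this composite were, say, $2\gamma$ or $3\gamma$, the comparison square would not commute and the cofiber would not be $D(\TJF_4)[8]$. Pinning it down is precisely Lemma \ref{lem_key}: one needs that $\{c\}\cdot$ composed with the equivalence \eqref{eq_TJF-1} equals $\zeta$, which rests on the value $c=2+O(z)$ (so $\res_{U(1)}^e\circ\{c\}$ is multiplication by $2$) together with the duality-compatibility of \eqref{eq_TJF_stabres}; only then do $\chi(\mu^2)=\chi(\mu)\cdot\{c\}$ and $\gamma=\chi(\mu)\cdot\zeta$ give the identification. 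Your proposal cites Proposition \ref{prop_Chua} and duality compatibility but never supplies this computation.

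Second, part (4) does not come ``simultaneously'' with the induction. The inductive squares (the paper's \eqref{diag_proofstep2}) have bottom arrows $D(\res_{\Spin(k)}^{\Spin(k-1)})$; to obtain the stated diagrams you must first identify the inclusions $\Spin(k-1)\hookrightarrow\Spin(k)$ under the exceptional isomorphisms \eqref{eq_spin_grp} with the chain $U(1)\hookrightarrow Sp(1)\hookrightarrow Sp(1)\times Sp(1)\hookrightarrow Sp(2)\hookrightarrow SU(4)$, which suffices for \eqref{diag_234}, but for \eqref{diag_456} you must additionally commute $D(\res)$ past the $Sp/Sp$ and $U/SU$ level-rank duality isomorphisms $\cF'_{Sp(1)_1}\otimes\cF'_{Sp(1)_1}$, $\cF'_{Sp(1)_2}$, $\cF'_{U(1)_4}$. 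That is the content of the paper's diagram \eqref{diag_proofstep3}, which requires the functoriality statement of \cite[Proposition~3.65]{lin2024topologicalellipticgenerai} together with a representation-theoretic identification; none of this is produced by the five-lemma argument. Finally, note that your inductive step tacitly uses $\res_{\Spin(k)}^e\colon\TMF^{\Spin(k)}\simeq\TMF$ for $3\le k\le 6$ (via the $SU/Sp$ identifications) so that the cofiber terms on the two sides agree; this fails for $k=2$ ($\Spin(2)\simeq U(1)$), which is exactly why the induction cannot start below $k=2$ and the base case needs the separate argument above.
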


\begin{figure}[b]
	\centering
	\begin{tikzpicture}[scale=0.45]
		\begin{scope}[shift={(-8, 0)}]
			\begin{celldiagram}
				\nu{4}
				\eta{2}
				\two{1}
				\eightnu{0}
				\n{0} \n{1} \n{2} \n{4} \n{8}
				\node [circ] at (0.4, 0) {};
				\foreach \y in {0, 1, 2, 4, 8} {
					\node [left] at (-0.5, \y) {$\y$};
				}
			\end{celldiagram}
			\node [] at (0, -10) {$n=0$};
			\node [left, orange] at (-1, 2) {$2\nu$};
			\node [right, red] at (0.5, 3) {$\eta$}; 
			\node [right, green!50!black] at (1, 6) {$\nu$};
			\node [blue] at (1, 1.5) {$2$};
		\end{scope}
		
		\begin{scope}[shift={(-4, 0)}]
			\begin{celldiagram}
				\nu{3}
				\eta{1}
				\two{0}
				\eightnu{-1}
				\n{-1} \n{0} \n{1} \n{3} \n{7}
				
				\foreach \y in {-1, 0, 1, 3, 7} {
					\node [left] at (-0.5, \y) {$\y$};
				}
			\end{celldiagram}
			\node [] at (0, -10) {$n=1$};
				\node [left, orange] at (-1, 1) {$2\nu$};
			\node [right, red] at (0.5, 2) {$\eta$}; 
			\node [right, green!50!black] at (1, 5) {$\nu$};
			\node [blue] at (1, 0.5) {$2$};
		\end{scope}
		
		\begin{scope}[shift={(0, 0)}]
			\begin{celldiagram}
				\nu{2}
				\eta{0}
				\eightnu{-2}
				\n{-2} \n{0} \n{2} \n{6}
				
				\foreach \y in {-2, 0, 2, 6} {
					\node [left] at (-0.5, \y) {$\y$};
				}
			\end{celldiagram}
			\node [] at (0, -10) {$n=2$};
				\node [left, orange] at (-1, 0) {$2\nu$};
			\node [right, red] at (0.5, 1) {$\eta$}; 
			\node [right, green!50!black] at (1, 4) {$\nu$};
		\end{scope}
		
		\begin{scope}[shift={(4, 0)}]
			\begin{celldiagram}
				\nu{1}
				\n{-3} \n{1} \n{5}
				\eightnu{-3}

				\foreach \y in {-3, 1, 5} {
					\node [left] at (-0.5, \y) {$\y$};
				}
			\end{celldiagram}
			\node [] at (0, -10) {$n=3$};
				\node [left, orange] at (-1, -1) {$2\nu$};
			\node [right, green!50!black] at (1, 3) {$\nu$};
		\end{scope}
		
		\begin{scope}[shift={(8, 0)}]
			\begin{celldiagram}
				\nu{-4}
				\nu{-0.5}
				\n{-4} \n{0} \n{-0.5} \n{3.5} 
			\draw [thick, green!50!black] (0, -4) to (0, -0.5);
			\draw [thick, green!50!black] (0, 0) to (0, 3.5);
				
				\foreach \y in {0, -4, 4} {
					\node [left] at (-0.5, \y) {$\y$};
				}
			\end{celldiagram}
			\node [] at (0, -10) {$n=4 $};
				\node [left, green!50!black] at (-0.5, -2) {$\nu$};
			\node [green!50!black] at (-1.5, 2) {$\nu$};
			\node [right, green!50!black] at (1, -2) {$\nu$};
			\node [left, green!50!black] at (1.5, 2) {$\nu$};
		\end{scope}
		
		\begin{scope}[shift={(12, 0)}]
			\begin{celldiagram}
				\nu{-5}
				\n{-5} \n{-1} \n{3} 
				\eightnu{-1}
				\foreach \y in {-5, -1, 3} {
					\node [left] at (-0.5, \y) {$\y$};
				}
			\end{celldiagram}
			\node [] at (0, -10) {$n=5$};
				\node [orange] at (0, 1) {$2\nu$};
			\node [right, green!50!black] at (1, -3) {$\nu$};
		\end{scope}
		
		\begin{scope}[shift={(16, 0)}]
			\begin{celldiagram}
				\eta{-2}
				\nu{-6}
				\n{-6} \n{-2} \n{0} \n{2}
				\eightnu{-2}
				\foreach \y in {-6, -2, 0, 2} {
					\node [left] at (-0.5, \y) {$\y$};
				}
			\end{celldiagram}
			\node [] at (0, -10) {$n=6$};
				\node [orange] at (0, 1) {$2\nu$};
			\node [right, green!50!black] at (1, -4) {$\nu$};
			\node [right, red] at (0.5, -1) {$\eta$}; 
		\end{scope}
		
		\begin{scope}[shift={(20, 0)}]
			\begin{celldiagram}
				\two{-1}
				\eta{-3}
				\nu{-7}
				\n{-7} \n{-3} \n{-1} \n{0} \n{1}
				\eightnu{-3}
				\foreach \y in {1, 0, -1, -3, -7} {
					\node [left] at (-0.5, \y) {$\y$};
				}
			\end{celldiagram}
			\node [] at (0, -10) {$n=7$};
				\node [orange] at (-1.5, -2) {$2\nu$};
			\node [right, green!50!black] at (1, -5) {$\nu$};
			\node [right, red] at (0.5, -2) {$\eta$}; 
			\node [blue] at (1, -0.5) {$2$};
		\end{scope}
		
		\begin{scope}[shift={(24, 0)}]
			\begin{celldiagram}
				\two{-2}
				\eta{-4}
				\nu{-8}
				\eightnu{-4}
				\n{-8} \n{-4} \n{-2} \n{-1} \n{0} 
				\node [circ] at (0.4, 0) {};
				\foreach \y in {0, -1,-2,-4,-8} {
					\node [left] at (-0.5, \y) {$\y$};
				}
			\end{celldiagram}
			\node [] at (0, -10) {$n=0$};
			\node [right, green!50!black] at (1, -6) {$\nu$};
			\node [right, red] at (0.5, -3) {$\eta$}; 
			\node [left, orange] at (-1, -3) {$2\nu$};
			\node [blue] at (1, -1.5) {$2$};
		\end{scope}
	\end{tikzpicture}
	\caption{The cell diagrams of $\TMF[n\overline{\lambda}]^{C_2}$, $n \in \Z/8$.}\label{celldiag_TMFZ/2}
\end{figure}

The rest of this subsection is devoted to the proof of Theorem \ref{thm_C2_twisted}. Our proof is structured as follows. First, in Section \ref{sec_key_lem} we establish a key lemma concerning the map $\{c\} \colon \TJF_{-1}[6] \to \TJF_2$. Then in Section \ref{sec_proofstep1}, we prove statement (1) for $k=2$ and statement (3). In Section \ref{sec_proofstep2}, we complete the proof of (1) by induction on $k$. 
Finally, in Section \ref{sec_proofstep3}, we show statement (4).  

\begin{rem}\label{rem_sigma}
	We have seen in \cite[Section~2.3]{lin2024topologicalellipticgenerai} the subtlety regarding the genuine equivariant refinement of sigma orientations. 
	The groups of the form $C_2 \times \Spin(k)$ may not be sigma-oriented in general. However, we claim that the groups $C_2 \times \Spin(k)$ for $2 \le k \le 6$ are sigma-orientable. The full detail will appear in \cite{MeierYamashita}. 
	Note that by the isomorphisms \eqref{eq_spin_grp}, all the groups $\Spin(k)$ for $2 \le k \le 6$ are string-orientable. Also, we have shown that $C_2$ is sigma-oriented in Proposition~\ref{prop_sigma_Cp}. We will show that the product $C_2 \times \Spin(k)$ is also sigma-orientable from the Picard group argument of the product stacks, and the fact that the Elliptic cohomology functor applied to stacks of the form $\mathbf{B} \Sp(n)$ and $\mathbf{B} \SU(n)$ gives bundles of projective spaces over $\moduli^{\ori}$ \cite{GepnerMeierNEW}.
\end{rem}

\subsubsection{A key lemma on a multiplication in $\TJF$}\label{sec_key_lem}

Here we show a simple but important lemma for our analysis below. Recall we have an element $\{c\} \in \pi_6 \TJF_3$ which satisfies $e_{\JF}(\{c\}) = c = 2 + O(z)[[q]]$. 

\begin{lem}\label{lem_key}
	The homotopy class of the composition
	\begin{align}\label{eq_lem_key}
		\TMF[7] \stackrel{\eqref{eq_TJF-1}}{\simeq} \TJF_{-1}[6] \xrightarrow{\{c\}\cdot } \TJF_2
	\end{align}
coincides with $\zeta \in \pi_7 \TJF_2= \Z \zeta/12\zeta$. The element $\zeta$ is defined in Fact~\ref{fact_pi_TJF} \eqref{fact_pi7TJF2}. 
\end{lem}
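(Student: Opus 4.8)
The plan is to detect the composition \eqref{eq_lem_key} by restriction to the trivial subgroup. As a $\TMF$-module map $\TMF[7] = \TJF_{-1}[6] \to \TJF_2$ it is classified by an element of $\pi_7\TJF_2$, namely $\{c\}\cdot u$, where $u \in \pi_7(\TJF_{-1}[6]) = \pi_1\TJF_{-1}$ is the image of $1 \in \pi_0\TMF$ under \eqref{eq_TJF-1}. First I would observe that $\res_{U(1)}^e \colon \pi_7\TJF_2 \to \pi_3\TMF$ is \emph{injective}: by Fact~\ref{fact_pi_TJF}\,\eqref{fact_pi7TJF2} it sends the generator $\zeta$ of $\pi_7\TJF_2 \simeq \Z/12$ to $2\nu$, which has order $12$ in $\pi_3\TMF \simeq \Z/24$. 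So it suffices to show $\res_{U(1)}^e(\{c\}\cdot u) = 2\nu$. Since $\res_{U(1)}^e$ is multiplicative for the graded $\mathbb{E}_2$-ring structure on $\{\TJF_k\}$ (it is pullback along the zero section; cf.\ the bottom row of \eqref{diag_stabres_JF_TJF}), and $\res_{U(1)}^e(\{c\}) = 2 \in \pi_0\TMF$ by Proposition~\ref{prop_c}\,(2) and \eqref{diag_P3TJF3}, we obtain $\res_{U(1)}^e(\{c\}\cdot u) = 2\cdot\res_{U(1)}^e(u)$. Thus the lemma reduces to the assertion that $\res_{U(1)}^e \colon \TJF_{-1} \simeq \TMF[1] \to \TMF[-2]$ is multiplication by $\nu \in \pi_3\TMF$, i.e.\ $\res_{U(1)}^e(u) = \nu$.

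To prove this I would use the stabilization--restriction fiber sequence \eqref{eq_TJF_stabres} at $k = -1$,
\[
\TMF[-3] \xrightarrow{\tr_e^{U(1)}} \TJF_{-2} \xrightarrow{\chi(\mu)} \TJF_{-1} \xrightarrow{\res_{U(1)}^e} \TMF[-2],
\]
which exhibits $\res_{U(1)}^e$ on $\TJF_{-1}$ as the connecting map of $\chi(\mu) \colon \TJF_{-2} \to \TJF_{-1}$. By \eqref{eq_duality_TJF}, \eqref{eq_P1} and \eqref{eq_P2} one has $\TJF_{-1} \simeq \TMF \otimes D_S(P_1)[1] = \TMF \otimes S^1$ and $\TJF_{-2} \simeq \TMF \otimes D_S(P_2)[1] \simeq \TMF \otimes (S^{-3} \cup_\nu S^1)$. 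Dualizing the bottom-cell inclusion $P_1 = S^0 \hookrightarrow P_2$ of \eqref{eq_attaching_P} (and using that the graded multiplication on $\{\TJF_k\}$ is $\TMF$-self-dual, cf.\ Proposition~\ref{prop_duality_seq}) identifies $\chi(\mu) \colon \TJF_{-2} \to \TJF_{-1}$ with $\id_\TMF$ smashed with the top-cell collapse $S^{-3}\cup_\nu S^1 \to S^1$. Its cofiber is $\TMF \otimes \Sigma S^{-3} = \TMF[-2]$, and the connecting map $\TMF\otimes S^1 \to \TMF\otimes\Sigma S^{-3}$ is $\id_\TMF$ smashed with the suspension $\Sigma\nu \colon S^1 \to S^{-2}$ of the attaching map, i.e.\ multiplication by $\nu$. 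Hence $\res_{U(1)}^e(u) = \nu$, so $\res_{U(1)}^e(\{c\}\cdot u) = 2\nu = \res_{U(1)}^e(\zeta)$, and injectivity of $\res_{U(1)}^e$ on $\pi_7\TJF_2$ gives $\{c\}\cdot u = \zeta$. (The sign ambiguities in the various choices of generators are immaterial for the way the lemma is applied.)

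The main obstacle is this last step --- pinning down that $\res_{U(1)}^e$ on $\TJF_{-1}\simeq\TMF[1]$ is multiplication by precisely $\nu$, rather than by some other, possibly non-generating, element of $\pi_3\TMF$, which would change the order of the composition. The argument above extracts this from the known cell structure of $\TJF_2$ (hence of $\TJF_{-2}$) together with the self-duality of $\{\TJF_k\}$; equivalently, one may note directly that the $\TMF$-dual of $\res_{U(1)}^e \colon \TJF_{-1} \to \TMF[-2]$ is, up to a shift, the transfer $\tr_e^{U(1)} \colon \TMF[3] \to \TJF_1$ appearing in \eqref{eq_TJF_stabres} for $k = 2$ (the $U(1)$-analogue of Proposition~\ref{prop_duality_seq}), and that this transfer is $\id_\TMF$ smashed with the attaching map $\nu \colon S^3 \to S^0$ of $P_2 = S^0 \cup_\nu S^4$. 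Either way the bookkeeping of suspensions and of which restriction/transfer map is in play needs care, but no difficult computation arises.
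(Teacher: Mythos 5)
Your proposal is correct and takes essentially the same route as the paper: both detect the composite by restricting to the trivial group (using that $\res_{U(1)}^e$ sends $\{c\}$ to $2$ and that, via the duality identification \eqref{eq_TJF-1} and the cell structure of $P_2$, the map $\TJF_{-1}\simeq\TMF[1]\xrightarrow{\res_{U(1)}^e}\TMF[-2]$ is multiplication by $\nu$), and conclude from the characterization of $\zeta$ as the class restricting to $2\nu$. The only small imprecision is the appeal to Proposition~\ref{prop_duality_seq} for the self-duality of the stabilization--restriction sequence (that proposition concerns the $\chi(\mu^n)$ sequences); the correct input is the compatibility of \eqref{eq_TJF_stabres} with duality as in \cite[(A.10)]{lin2024topologicalellipticgenerai}, which is exactly what the paper's proof invokes.
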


\begin{proof}
We claim that the following diagram commutes:
\begin{align}\label{diag_key_lem}
	\xymatrix@C=7em{
\TMF[7] \ar[r]^-{\simeq}_-{\eqref{eq_TJF-1}} \ar[rd]_-{\nu \cdot}&	\TJF_{-1}[6] \ar[r]^-{\{c\}\cdot } \ar[d]^-{\res_{U(1)}^e} & \TJF_2 \ar[d]^-{\res_{U(1)}^e} \\
&	\TMF[4] \ar[r]^-{2 \cdot }& \TMF[4]
	}
\end{align}
To see right square, take the global section $c \in \pi_6\Gamma(\cE^\ori, \cO_{\cE^\ori}(3e))$ and regard it as a sheaf morphism $\Sigma^6 \cO_{\cE^\ori} \to \cO_{\cE^\ori}(3e)$. The middle arrow in \eqref{diag_P3TJF3} shows that, when restricted to the basepoint $e \colon \cM^\ori \to \cE^\ori$, the map $c$ induces the $2$-multiplication map between fibers $\TMF[6]$. Then the right square can be obtained by tensoring $\cO_{\cE^\ori}(-e)$.
To see the commutativity of the left triangle, we use the fact that the restriction-stabilization fiber sequence \eqref{eq_TJF_stabres} is compatible with the duality \eqref{eq_duality_TJF} \cite[(A.10)]{lin2024topologicalellipticgenerai}. 
In our case, it means that the stabilization-restriction fiber sequence
\begin{align}
	\TJF_{-2} \xrightarrow{\chi(\mu) \cdot } \TJF_{-1} \xrightarrow{\res_{U(1)}^e} \TMF[-2] 
\end{align}
is, after shifting the degree by one, $\TMF$-linear dual to the following stabilization-restriction fiber sequence which exhibits $\TJF_2$ as a cofiber of $\nu$ (see \eqref{eq_P1}, \eqref{eq_P2} and \eqref{eq_attaching_P}): 
\begin{align}
	\xymatrix{
		\TJF_{2} \ar[d]^-{\simeq}& \ar[l]_-{\chi(\mu) \cdot} \ar[d]^-{\simeq} \TJF_1  & \ar[l]_-{\tr_{e}^{U(1)}} \TMF[3] \ar@{=}[d] \\
		\TMF/\nu & \TMF \ar[l] & \TMF[3] \ar[l]_-{\cdot \nu}
	}
\end{align}
This implies the commutativity of the left triangle of \eqref{diag_key_lem}, and furthermore, that the homotopy class of the composition \eqref{eq_lem_key} is an element in $\pi_7 \TJF_2$ that restricts to $2\nu \in \pi_3 \TMF$---the definition of $\zeta$.
\end{proof}

\begin{rem}[A direct proof for weak statements]\label{rem_C2_weaker}
	At this point, we can prove the identification as a $\TMF$-module
	\begin{align}
		\TMF[6\lambda]^{C_2} \simeq \TJF_4, \quad &\TMF[2\lambda]^{C_2} \simeq D(\TJF_4)[8], \label{eq_rem_twistedC2_1} \\
		\TMF[5\lambda]^{C_2} \simeq \TEJF_4 , \quad &\TMF[3\lambda]^{C_2} \simeq D(\TEJF_4)[8]. \label{eq_rem_twistedC2_2}
	\end{align}
	Indeed, the fiber sequence
	\begin{align}
		\TJF_{-1} \xrightarrow{\chi(\mu^2) = \chi(\mu) \circ\{c\}} \TJF_3 \xrightarrow{\res_{U(1)}^{C_2}} \TMF[6\lambda]^{C_2}
	\end{align}
	from Proposition \ref{prop_twisted}, together with Lemma \ref{lem_key} and the equality $\chi(\mu) \cdot \zeta = \gamma$ in $\pi_7 \TJF_3$ (Fact \ref{fact_pi_TJF} \eqref{fact_pi7TJF3}) implies that we have an isomorphism
	\begin{align}
		\TMF[6\lambda]^{C_2} \simeq  \TJF_4, 
	\end{align}
	where we have used \eqref{fact_TJF3toTJF4}. 
	By the duality $\TMF[k\lambda]^{C_2}[8] \simeq D(\TMF[(8-k)\lambda])$, we get the second isomorphism. 
	Similarly, for the $5\lambda$-twist, we use the fiber sequence
	\begin{align}
		\TJF_{-1} \xrightarrow{\{c\}} \TJF_2 \xrightarrow{\chi(\lambda) \circ \res_{U(1)}^{C_2}} \TMF[6\lambda]^{C_2}
	\end{align}
	from Propositions \ref{prop_oddtwisted}, \ref{fact_zeta_TEJF}, and Lemma \ref{lem_key} to get
	\begin{align}
		\TMF[5\lambda]^{C_2}\simeq \TEJF_4.
	\end{align} 
	The result on the $3\lambda$-twist follows from the dual. 
\end{rem}

\subsubsection{Proof of Theorem \ref{thm_C2_twisted}, Step 1 : Showing (1) for $k=2$ and (3)}\label{sec_proofstep1}
First, let us consider the $k=2$ case. 
We already know that we have a diagram
\begin{align}\label{diag_proofstep1}
		\xymatrix@C=4em{
		\TJF_{-3}[6] \ar[rr]^-{\chi(\mu^2) \cdot } && \TJF_1 \ar[rr]^-{\res_{U(1)}^{C_2}} && \TMF[2\lambda]^{C_2} \\
		D(\TJF_3)[7] \ar@{=}[d] \ar[rr]^-{D(\chi(\mu^2)\cdot)} \ar[u]^-{\eqref{eq_duality_TJF}}_-{\simeq} &&D(\TJF_{-1})[1] \ar[u]^-{\eqref{eq_duality_TJF}}_-{\simeq} && D(\TMF[\overline V_{\Spin(2)}]^{\Spin(2)}) \ar[u]^-{\cF'_{(C_2)_2}}\\
			D(\TJF_3)[7] \ar[rr]^-{D(\gamma)} && D(\TMF) = \TMF  \ar[rr]^-{D(\res_{U(1)}^e)} \ar[u]^-{\eqref{eq_TJF-1}}_-{\simeq }  \ar@/_40pt/[uu]_-{\chi(\mu)} \ar[rruu]^-{\chi(2\lambda)}& & D(\TJF_4)[8] \ar[u]^-{\simeq}_-{\eqref{eq_spin_grp}}
	}
\end{align}
whose top and bottom rows are fiber sequences, and whose vertical arrows in the left and middle columns are isomorphisms. So we are left to show that this diagram commutes; indeed, it would imply the theorem's statement (3) as well as (1) for $k=2$. 

The top left square commutes since the duality in $U(1)$-equivariant $\TMF$ is compatible with the graded ring structure on $\oplus_m \TJF_m$.
The middle triangle commutes by the definition of the isomorphism \eqref{eq_TJF-1}. 
The right upper triangle commutes since $$2\lambda \simeq \res_{U(1)}^{C_2}(\mu). $$ 
The bottom left square commutes by Lemma \ref{lem_key} and the fact that $$\gamma = \chi(\mu) \cdot \zeta, \mbox{and }  \chi(\mu^2) = \chi(\mu) \cdot\{c\} . $$

Note that, by the naturality of Spanier-Whitehead duality, the bottom-right composition of the right square in \eqref{diag_proofstep1} is equivalent to the composition
\begin{align}\label{eq_proofstep1_coev}
	\TMF \xrightarrow{\cF_{(C_2)_2}}\TMF[2\lambda]^{C_2} \otimes_\TMF \TMF[\overline V_{\Spin(2)}]^{\Spin(2)} \xrightarrow{\id \otimes \res_{\Spin(2)}^{e}} \TMF[2\lambda]^{C_2}. 
\end{align}
Now, consider the diagram
 \begin{align}\label{diag_proofstep1_coev}
	\xymatrix@C=5em{
	\TMF \ar[r]|-{\chi(V_{\phi_2})}  \ar[rd]_-{\chi(\res_{C_2 \times \Spin(2)}^{C_2} (V_{\phi_2}))} \ar@/^30pt/[rr]^-{\cF_{(C_2)_2}} & \TMF[V_{\phi_2}]^{C_2 \times \Spin(2)} \ar[r]_-{\simeq}^-{\sigma(\Theta_2, \fraks)} \ar[d]^-{\res_{C_2 \times \Spin(2)}^{C_2}}& \TMF[2\lambda]^{C_2} \otimes_\TMF \TMF[\overline V_{\Spin(2)}]^{\Spin(2)} \ar[d]^-{\id \otimes \res_{\Spin(2)}^e} \\
& \TMF[\res^{C_2}_{C_2 \times \Spin(2)} V_{\phi_2}]^{C_2} \ar[r]^-{\simeq}_-{\eqref{eq_proofstep1_rep}}& \TMF[2\lambda]^{C_2}
	}
\end{align}
Here, the bottom horizontal equivalence is given by the isomorphism, 
\begin{align}\label{eq_proofstep1_rep}
	\res_{C_2 \times \Spin(2)}^{C_2} (V_{\phi_2}) = \res_{C_2 \times \Spin(2)}^{C_2} (\lambda \otimes_\R V_{\Spin(2)}) \simeq 2\lambda \quad \mbox{in } \RO(C_2).  
\end{align}
The square commutes because of the definition of the string structure $\fraks$ on $V_{\phi_2}$ in \cite[Proposition 4.26]{lin2024topologicalellipticgenerai}. 
The lower left triangle commutes by the functoriality of the equivariant Euler class. Because the composition of the bottom arrows is $\chi(2\lambda)$, this completes the proof of the commutativity of the diagram \eqref{diag_proofstep1}.

\subsubsection{Proof of Theorem \ref{thm_C2_twisted}, Step 2 : Showing (1) for $3 \le k \le 6$} \label{sec_proofstep2}

We prove it inductively on $k$. This part of the proof is analogous to the proof of the $U/SU$ and $Sp/Sp$ level-rank duality statements in \cite[Section~6]{lin2024topologicalellipticgenerai}.

As a special case of the stabilization-restriction fiber sequences \cite[Proposition~4.45]{lin2024topologicalellipticgenerai}, we have the following fiber sequence for any $k$:
\begin{align}\label{eq_stabres_spin}
	\TMF[-k]^{\Spin(k)} \xrightarrow{\chi(V_{\Spin(k)}) \cdot } \TMF[\overline{V}_{\Spin(k)}]^{\Spin(k)} \xrightarrow{\res_{\Spin(k)}^{\Spin(k-1)}}\TMF[\overline V_{\Spin(k-1)}]^{\Spin(k-1)}.
\end{align}
Moreover, we claim that the following restriction map is an isomorphism for $3 \le k \le 6$:\footnote{This condition on the range of $k$ is essential: for $k=2$ we have $\Spin(2) \simeq U(1)$ and the restriction map is not an isomorphism.}
\begin{align}
	\res_{\Spin(k)}^e \colon \TMF^{\Spin(k) } \simeq \TMF \mbox{ for } 3 \le k \le 6. 
\end{align}
Indeed, it follows from the list of identifications \eqref{eq_spin_grp} of the Spin groups in this range with products of the $SU$ and $Sp$ groups. The corresponding restriction maps are isomorphisms for those series of groups (see \cite{GepnerMeierNEW} and \cite[Fact~6.5]{lin2024topologicalellipticgenerai})
\begin{align}
	\res_{G}^e \colon \TMF^{G } \simeq \TMF \quad \mbox{ for } G = SU(n), \ G=Sp(n).
\end{align}

The level-rank duality morphisms \eqref{eq_levelrank} are compatible with the stabilization-restriction sequences by \cite[Proposition~4.97]{lin2024topologicalellipticgenerai}. In our case, for $3 \le k \le 6$, it means that the following diagram commutes:

\begin{align}\label{diag_proofstep2}
	\xymatrix@C=6em{
		\TMF[(k-1)\lambda]^{C_2} \ar[r]^-{\chi(\lambda) \cdot } & \TMF[k\lambda]^{C_2} \ar[r]^-{\res_{C_2}^e} & \TMF[k] \ar@{=}[d]\\
		D(\TMF[\overline V_{\Spin(k-1)}]^{\Spin(k-1)}) \ar[r]^-{D(\res_{\Spin(k)}^{\Spin(k-1)})} \ar[u]^-{\cF'_{(C_2)_{k-1}}}&	D(\TMF[\overline V_{\Spin(k)}]^{\Spin(k)}) \ar[r]^-{D(\chi(V_{\Spin(k)}))} \ar[u]^-{\cF'_{(C_2)_{k}}}  & D(\TMF[-k])
	}
\end{align}
here, the upper row is the stabilization-restriction fiber sequence for $C_2$, and the bottom row is the dual of \ref{eq_stabres_spin}. This completes the inductive proof of Theorem \ref{thm_C2_twisted} (1). 

\subsubsection{Proof of Theorem \ref{thm_C2_twisted}, Step 3: The proof of (4)}\label{sec_proofstep3}
By the commutativity of the left square of \eqref{diag_proofstep2}, the multiplication of $\chi(\lambda)$ is identified with the dual of the restriction map along $\Spin(k-1) \hookrightarrow \Spin(k)$. 
Recall that the group isomorphisms \eqref{eq_spin_grp} identify the inclusion of Spin groups as
\begin{align}
	U(1) \hookrightarrow Sp(1) \stackrel{\rm \id \times \id}{\hookrightarrow} Sp(1) \times Sp(1)  \stackrel{\rm diag}{\hookrightarrow} Sp(2) \hookrightarrow SU(4). 
\end{align}
The commutativity of \eqref{diag_234} follows from the first two identifications. 
For \eqref{diag_456}, we should further show the commutativity of the following diagram: 
\begin{align}\label{diag_proofstep3}
	\xymatrix@C=5em{
	D(\TEJF_2 \otimes_\TMF \TEJF_2 )[8] \ar[r]^-{D(\res_{\rm diag})} \ar[d]_{\simeq}^-{\cF'_{Sp(1)_1} \otimes \cF'_{Sp(1)_1}}& D(\TMF[\overline V_{Sp(2)}]^{Sp(2)}) \ar[r]^-{D(\res_{SU(4)}^{Sp(2)})} \ar[d]_-{\simeq}^-{\cF'_{Sp(1)_2}} & D(\TMF[\overline V_{SU(4)}]^{SU(4)})  \ar[d]^-{\cF'_{U(1)_4}}_-{\simeq}  \\
	\TEJF_2 \otimes_\TMF \TEJF_2 \ar[r]^-{\rm{multi}} & \TEJF_4 \ar[r]^-{\res_{Sp(1)}^{U(1)}} & \TJF_4
}
\end{align}
Here, the vertical arrows consist of level-rank duality isomorphisms for $U/SU$ and $Sp/Sp$ in \cite{lin2024topologicalellipticgenerai}. The commutativity of the diagram \eqref{diag_proofstep3} follows by the functoriality statement in \cite[Proposition~3.65]{lin2024topologicalellipticgenerai}.\footnote{The right square is a special case of \cite[(4.30)]{lin2024topologicalellipticgenerai}. For the left square, we use the fact that the restriction along the group homomorphism (here $a,b,c,d,x$ label copies of $Sp(1)$)
$$\id \times (\id, \id) \colon \left( Sp(1)_a \times Sp(1)_b \right) \times Sp(1)_x \to \left( Sp(1)_a \times Sp(1)_b	\right) \times \left( Sp(1)_c \times Sp(1)_d	\right)$$
of the representation 
\begin{align}
	\left( V_{Sp(1)_a} \otimes_\bH V_{Sp(1)_c}^*\right)  \oplus \left(V_{Sp(1)_b} \otimes_\bH V_{Sp(1)_d}^* \right)
\end{align}
is equivalent to the restriction along the group homomorphism
$$\mathrm{diag} \times \id \colon \left( Sp(1)_a \times Sp(1)_b \right) \times Sp(1)_x \to Sp(2) \times Sp(1)_x$$
of the representation 
\begin{align}
	V_{Sp(2)} \otimes_\bH V_{Sp(1)_x}^*.
\end{align}
}
This finishes the proof of (4) and completes the proof of Theorem~\ref{thm_C2_twisted}. 

\section{Application 2 : $3$-local $C_3$-equivariant $\TMF$}\label{sec_C3}

We apply our general strategy in Section \ref{sec_general} to $n=3$ to study the $3$-local structure of $\TMF^{C_3}$. 
The structure of $\TMF^{C_n}$ without any $\RO(C_n)$ twist has been investigated extensively, and among the prime-order cyclic groups, the $3$-local structure of $\TMF^{C_3}$ was the remaining open case. 
In this section, we resolve this final case by explicitly determining the $\pi_* \TMF$-module structure of $\pi_* \TMF^{C_3}$. 
{\it Throughout this section, all spectra are implicitly $3$-localized. }

The strategy is to apply the result of Section \ref{subsec_twisted} for {\it twisted} cases. 
Consider the equivalence
\begin{align}\label{eq_C3twist_trivial}
	\TMF[-3\rho_3]^{C_3} \simeq \TMF[-6]^{C_3}, 
\end{align}
by Proposition \ref{prop_sigma_Cp}.
Thus, Proposition \ref{prop_twisted}, applied to $k=-3$ and $n=3$, gives the fiber sequence
\begin{align}\label{eq_TMFC3_twisted_fibseq}
	\TJF_6[-12] \xrightarrow{\res_{U(1)}^{C_3}} \TMF^{C_3} \xrightarrow{\tr_{C_3}^{U(1)}} \TJF_{-3}[5] \xrightarrow{\chi(\mu^3)} \TJF_6[-11].
\end{align}

\begin{rem}
	The reason why we do not use Proposition \ref{prop_general_nontwist}
	 is that the resulting fiber sequence
	\begin{align}
	\TJF_0[-2] \to \TJF_9[-18]	\to \TMF^{C_3} \to \TJF_0[-1],
	\end{align}
	is not split.
\end{rem}

\begin{lem}\label{lem_TJF-3_6}
	We have the following isomorphisms of $\TMF$-modules:
	\begin{align}
		\TJF_{-3} &\simeq \TMF[-5] \oplus \TMF/\alpha [-3], \\
		\TJF_{6} &\simeq \TMF_1(2) \oplus \TMF/\alpha [6 ] \oplus \TMF[12]. 
	\end{align}
\end{lem}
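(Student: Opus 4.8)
The plan is to handle the two spectra separately, exploiting the cell-complex models $\TJF_k\simeq\TMF\otimes P_k$ and their duals, together with the fact that we are $3$-localized so that $\eta$ and all $2$-torsion vanish. For $\TJF_{-3}$ the argument is purely formal: by \eqref{eq_duality_TJF}, $\TJF_{-3}\simeq\TMF\otimes D_S(P_3)[1]$, and by \eqref{eq_P3}, $P_3\simeq S^0\cup_\nu S^4\cup_\eta S^6$; since the top attaching map factors through the $2$-torsion element $\eta$, $3$-locally the top cell splits off and $P_3\simeq(S^0\cup_\alpha S^4)\vee S^6$ (recall $\nu\mapsto\alpha$ under $3$-localization). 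Dualizing, and using that $S/\alpha:=S^0\cup_\alpha S^4$ is Spanier--Whitehead self-dual up to a shift by $-4$, gives $D_S(P_3)\simeq\Sigma^{-4}(S/\alpha)\vee S^{-6}$; smashing with $\TMF$ and shifting by $1$ yields $\TJF_{-3}\simeq\TMF/\alpha[-3]\oplus\TMF[-5]$.

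For $\TJF_6\simeq\TMF\otimes P_6$ the plan is to read off the $3$-local cell structure of $P_6$ (cells in degrees $0,4,6,8,10,12$) from the known stable attaching maps of $\CP^5$, using \cite{Mosher}. Three-locally these are governed by the Steenrod operation $P^1$, and the induced attaching maps in $P_6$ are: the cells in degrees $0,4,8$ form a subcomplex $S^0\cup_\alpha S^4\cup_\alpha S^8$, the cells in degrees $6,10$ form $S^6\cup_{2\alpha}S^{10}\simeq_{(3)}\Sigma^6 S/\alpha$, and the degree-$12$ cell carries no $P^1$. All remaining ``exotic'' attaching data of $\CP^5$ lands in $\pi_7 S_{(3)}$ and $\pi_{11}S_{(3)}$, which map to $\pi_7\TMF_{(3)}=\pi_{11}\TMF_{(3)}=0$ (by $72$-periodicity), so after smashing with $\TMF$ they are null; hence $\TJF_6\simeq\TMF\otimes(S^0\cup_\alpha S^4\cup_\alpha S^8)\oplus\TMF/\alpha[6]\oplus\TMF[12]$.

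The substantive step --- carried out in Appendix~\ref{appendix_TJF2inverted} --- is the identification $\TMF\otimes(S^0\cup_\alpha S^4\cup_\alpha S^8)\simeq\TMF_1(2)$ $3$-locally. One computes the homotopy of the left-hand side directly: although the associated graded of the cell filtration would naively contribute $3$-torsion (from $\pi_*\TMF/\alpha$), the two successive $\alpha$-attachments realize the relation $\langle\alpha,\alpha,\alpha\rangle=\beta$, which cancels that torsion, so that the homotopy of the $3$-cell complex is free over $\MF_{(3)}$ of rank $3$ on classes in degrees $0,4,8$ --- matching $\pi_*\TMF_1(2)_{(3)}=\pi_*\TMF_0(2)_{(3)}$ (recall $\TMF_1(2)=\TMF_0(2)$ is a torsion-free $\TMF$-module of rank $3$, hence $3$-locally a nonsplit $3$-cell complex on which $\alpha$ acts by zero, with module generators in degrees $0,4,8$). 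Matching Hurewicz images then produces the equivalence. I expect this identification, together with the verification that the exotic $\CP^5$-attaching maps genuinely die $3$-locally after smashing with $\TMF$, to be the main obstacle; the $\TJF_{-3}$ computation and the preliminary cell analysis of $\TJF_6$ are routine.
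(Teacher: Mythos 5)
Your conclusions are correct, and for $\TJF_{-3}$ your argument (3-local splitting of $P_3$ plus Spanier--Whitehead duality and \eqref{eq_duality_TJF}) is the same as the paper's, which deduces it from $\TJF_3\simeq\TMF/\alpha\oplus\TMF[6]$ and \eqref{eq_duality_TJF}. For $\TJF_6$, however, you take a genuinely different route. The paper does not analyze the attaching maps of $P_6$ at all: it proves the general splitting $\TJF_m\simeq\TJF_{m-3}[6]\oplus\TMF_1(2)$ (for $m\ge 4$, $2$ inverted) by an induction (Lemma \ref{lem_TJF4} and Proposition \ref{prop_precise_structure_TJF}) whose engine is the element $\{c\}\in\pi_6\TJF_3$ with $\res_{U(1)}^e(\{c\})=2$ a unit, together with the stabilization-restriction sequence and a chosen splitting $\psi\colon\TMF_1(2)\to\TJF_4$; only the single complex $P_4$ needs to be split by hand. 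Your route instead reads off the $3$-local cell structure of $P_6$ from the $\CP^5$ attaching maps (Steenrod $P^1$, \cite{Mosher}) and kills the remaining attaching data after smashing with $\TMF$. What the paper's route buys is uniformity in $m$ and no dependence on knowing $\pi_7 S$, $\pi_{11}S$ or exotic attaching maps; what yours buys is a direct, self-contained cell analysis for the one case needed.

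Two points in your sketch need repair. First, and most seriously, your treatment of the identification $\TMF\otimes(S^0\cup_\alpha S^4\cup_\alpha S^8)\simeq\TMF_1(2)$ is not a proof as written: an abstract isomorphism of homotopy groups (even granting your Toda-bracket bookkeeping with $\beta=\langle\alpha,\alpha,\alpha\rangle$, which is itself a nontrivial computation) does not produce an equivalence of $\TMF$-modules, and ``matching Hurewicz images'' is not the mechanism --- one must construct an actual $\TMF$-module map cell by cell, using that $\alpha$ acts trivially on $\pi_*\TMF_1(2)$ and choosing extensions hitting $a_2$ and $a_4$, and then verify it is a $\pi_*$-isomorphism. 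This is precisely the known result \eqref{eq_TMF_1(2)_cell}, which the paper simply cites (\cite[Theorem 13.4]{BrunerRognes}); you should cite it too rather than re-derive it. Second, your vanishing argument for the exotic attaching data is stated in the wrong groups: the obstruction to splitting off a $\TMF$-cell lives in $\pi_7$, respectively $\pi_{11}$, of $\TMF$ smashed with the relevant lower skeleton (e.g.\ $\pi_{11}\bigl(\TMF_1(2)\oplus\TMF/\alpha[6]\bigr)$ and $\pi_7(\TMF/\alpha)$), not in $\pi_*\TMF$ itself. The conclusion survives --- $\pi_{11}\TMF_1(2)=0$ since its homotopy is even, $\pi_5(\TMF/\alpha)=0$, and the bottom-cell contribution to $\pi_7(\TMF/\alpha)$ vanishes because $\pi_7\TMF=0$ --- but these checks are what actually closes the argument, and they are absent from your write-up.
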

\begin{proof}
	The decomposition follows from Proposition \ref{prop_structure_TJF_2inverted} and the duality \eqref{eq_duality_TJF}.
\end{proof}

\begin{thm}[{$3$-local $\TMF$-module structure of $\TMF^{C_3}$}]\label{thm_C3_3loc_main}
	The fiber sequence \eqref{eq_TMFC3_twisted_fibseq} is split at $\TMF^{C_3}$. Thus, we obtain the following decomposition of $\TMF^{C_3}$ as a $\TMF$-module:
	\begin{align}
		\TMF^{C_3} &\simeq 	\TJF_6[-12] \oplus \TJF_{-3}[5] \\
		&\simeq \TMF_1(2)[-12] \oplus \TMF/\alpha[-6 ] \oplus \TMF \oplus \TMF \oplus \TMF/\alpha[2] \\
		&\simeq \TMF \otimes \left(S^{-4} \cup_\alpha S^0 \cup_\alpha S^4 \oplus S^{-6} \cup_\alpha S^{-2} \oplus S^0 \oplus S^0 \oplus S^2 \cup_\alpha S^6 \right)  , 
	\end{align}
	where the second equivalence used Lemma \ref{lem_TJF-3_6}, and the third equivalence used \eqref{eq_TMF_1(2)_cell} and \eqref{eq_TMF_1(2)_8periodic}. 
	The corresponding cell diagram is in Figure \ref{fig:cell_TMFC3}. 
\end{thm}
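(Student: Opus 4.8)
The plan is to reduce the whole theorem to the single statement that the connecting map of the fiber sequence \eqref{eq_TMFC3_twisted_fibseq}, namely the $\TMF$-linear map $\chi(\mu^3)\colon \TJF_{-3}[5]\to \TJF_6[-11]$, is nullhomotopic. Once this is known the triangle $\TJF_6[-12]\to\TMF^{C_3}\to\TJF_{-3}[5]\xrightarrow{\chi(\mu^3)}\TJF_6[-11]$ splits, so $\TMF^{C_3}\simeq \TJF_6[-12]\oplus\TJF_{-3}[5]$ as $\TMF$-modules, and the remaining identifications in the statement are then purely formal: one inserts the $\TMF$-module decompositions of $\TJF_6$ and $\TJF_{-3}$ from Lemma~\ref{lem_TJF-3_6}, rewrites $\TMF/\alpha\simeq \TMF\otimes(S^0\cup_\alpha S^4)$, and uses the cell structure of $\TMF_1(2)$ from \eqref{eq_TMF_1(2)_cell} and \eqref{eq_TMF_1(2)_8periodic} to obtain the cell-level description and Figure~\ref{fig:cell_TMFC3}. (Using the twisted fiber sequence \eqref{eq_TMFC3_twisted_fibseq} rather than the one from Proposition~\ref{prop_general_nontwist} is essential, as explained in the remark preceding the statement.)

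To prove the vanishing I would show that the \emph{entire} group $[\TJF_{-3}[5],\TJF_6[-11]]_{\TMF}$ of homotopy classes of $\TMF$-linear maps is already zero, so that a fortiori $\chi(\mu^3)=0$. By Lemma~\ref{lem_TJF-3_6} (suitably shifted) one has $\TJF_{-3}[5]\simeq \TMF\oplus\TMF/\alpha[2]$ and $\TJF_6[-11]\simeq \TMF_1(2)[-11]\oplus\TMF/\alpha[-5]\oplus\TMF[1]$, so the Hom group breaks into six summands. The three summands mapping out of the copy of $\TMF$ are $\pi_{-11}\TMF_1(2)$, $\pi_{-5}(\TMF/\alpha)$ and $\pi_{-1}\TMF$: the first and third vanish since $\pi_*\TMF_1(2)$ is concentrated in even degrees and $\pi_{-1}\TMF_{(3)}=0$, and the second follows from the cofiber long exact sequence of $\alpha$ together with $\pi_{-5}\TMF_{(3)}=\pi_{-9}\TMF_{(3)}=0$. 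For the three summands mapping out of $\TMF/\alpha[2]$ I would use the cofiber sequence $\Sigma^3\TMF\xrightarrow{\alpha}\TMF\to\TMF/\alpha$, which gives for any $\TMF$-module $M$ a short exact sequence $0\to\mathrm{coker}(\pi_1M\xrightarrow{\alpha}\pi_4M)\to[\TMF/\alpha,M]_{\TMF}\to\ker(\pi_0M\xrightarrow{\alpha}\pi_3M)\to 0$; applying this to $M=\TMF_1(2)[-13]$, $\TMF/\alpha[-7]$ and $\TMF[-1]$ and feeding in the (well-known, $3$-local) homotopy ring of $\TMF$ in a small range of degrees reduces each to a quick check.

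The step I expect to require the most care is the summand $[\TMF/\alpha,\TMF/\alpha[-7]]_{\TMF}$: besides the additive data $\pi_7(\TMF/\alpha)\cong\pi_{10}(\TMF/\alpha)\cong\Z/3$ and $\pi_{11}(\TMF/\alpha)=0$, one needs that multiplication by $\alpha$ gives an \emph{isomorphism} $\pi_7(\TMF/\alpha)\xrightarrow{\sim}\pi_{10}(\TMF/\alpha)$, which is precisely the assertion that the Toda bracket $\langle\alpha,\alpha,\alpha\rangle=\beta$ is nonzero in $\pi_{10}\TMF_{(3)}$ (as in the notation of Section~\ref{subsec_notations}): the class in $\pi_7(\TMF/\alpha)$ lifting $\alpha\in\pi_3\TMF$ is carried by $\alpha$ onto $\beta$ on the top cell. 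Granting this, the kernel and cokernel in the short exact sequence both vanish, so this Hom group is zero; the other two summands out of $\TMF/\alpha[2]$ vanish already because the relevant $\pi_*\TMF_1(2)$ and $\pi_*\TMF_{(3)}$ groups vanish in the needed odd and negative degrees. With all six summands zero we conclude $\chi(\mu^3)=0$, hence \eqref{eq_TMFC3_twisted_fibseq} splits at $\TMF^{C_3}$, and substituting Lemma~\ref{lem_TJF-3_6} and the cell descriptions yields the displayed decomposition and Figure~\ref{fig:cell_TMFC3}.
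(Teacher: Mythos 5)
Your proposal is correct and follows essentially the same route as the paper: split the fiber sequence \eqref{eq_TMFC3_twisted_fibseq} by showing the whole group $[\TJF_{-3}[5],\TJF_6[-11]]$ of $\TMF$-module maps vanishes, decomposing it via Lemma \ref{lem_TJF-3_6} into the same six summands and killing each with the evenness of $\pi_*\TMF_1(2)$, the sparseness of $\pi_*\TMF_{(3)}$, and the nontriviality of $\alpha\cdot\colon\pi_7(\TMF/\alpha)\to\pi_{10}(\TMF/\alpha)$. The only differences are cosmetic: you justify that last point by the Toda bracket $\langle\alpha,\alpha,\alpha\rangle=\beta$ rather than by the exotic extension read off the descent spectral sequence chart (Figure \ref{fig:TJF2}), which is a perfectly valid and arguably more self-contained argument, and your indices $\pi_{-11}\TMF_1(2)$ and $\pi_{-5}(\TMF/\alpha)$ should be $\pi_{11}\TMF_1(2)$ and $\pi_{5}(\TMF/\alpha)$ — a harmless slip since all of these groups vanish.
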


\begin{proof}
	It suffices to show the $\TMF$-module morphism
	\begin{align}
	\chi(\mu^3) \colon \TJF_{-3}[5] \to \TJF_6[-11]
	\end{align}
	is null-homotopic. We establish the stronger statement that 
	\begin{align}\label{eq_proof_C3_main}
	\left[\TJF_{-3}[5],  \TJF_6[-11]\right] = 0, 
	\end{align}
	where here $[-, -]$ denotes the group of homotopy classes of $\TMF$-module morphisms. 

Using Lemma \ref{lem_TJF-3_6}, we rewrite the hom set as
\begin{align}
		\nonumber \left[\TJF_{-3}[5], \TJF_6[-11]\right] & \simeq \pi_{3} \TMF_1(2) \oplus \pi_5 \TJF_2 \oplus \pi_{-1} \TMF \\
		& \oplus \left[ \TMF/\alpha, \TMF_1(2)[-5]\right] \oplus \left[ \TMF/\alpha, \TJF_2[-7]\right] \oplus \left[ \TMF/\alpha, \TMF[-1]\right] . 
\end{align}
Each term vanishes for the following reasons:
\begin{itemize}
    \item $\pi_3 \TMF_1(2)=0$ since the homotopy groups of $\TMF_1(2)$ are concentrated in even degrees.
    \item $\pi_5 \TJF_2 \simeq \pi_5 \TMF/\alpha = 0$, and $\pi_{-1}\TMF=0$.
\end{itemize}
For the last three factors, we invoke a long exact sequence for a $\TMF$-module spectrum $M$ induced by the multiplication by $\alpha$:
\begin{align}
\cdots \to \pi_{1-k}M \xrightarrow{\alpha \cdot} \pi_{4-k} M \to \left[\TJF_2, M[k]\right] \to \pi_{-k} M \xrightarrow{\alpha \cdot} \pi_{3-k} M \to \cdots. 
\end{align}
In particular, 
\begin{itemize}
	\item $ \left[ \TMF/\alpha, \TMF_1(2)[-5]\right] =0$ because the homotopy groups of $\TMF_1(2)$ vanish in odd degrees;
	\item $ \left[ \TMF/\alpha, \TJF_2[-7]\right]=0$ because by the diagram in Figure~\ref{fig:TJF2}, $\pi_{11} \TJF_2 =0$ and $\pi_7 \TJF_2 \simeq \Z/3$ whose generator $\{\alpha \frac{b}{3}\}$ does not vanish under the $\alpha$-multiplication (indeed, we have an exotic extension $\alpha \cdot \{\alpha \frac{b}{3}\} = \beta\{a^2\} \neq 0$ in $\pi_{10}\TJF_2$);
	\item $\left[ \TMF/\alpha, \TMF[-1]\right] =0$ as $\pi_5\TMF$ and $\pi_1 \TMF$ vanish.
\end{itemize}
These observations complete the proof of \eqref{eq_proof_C3_main} and Theorem \ref{thm_C3_3loc_main}. 
\end{proof}

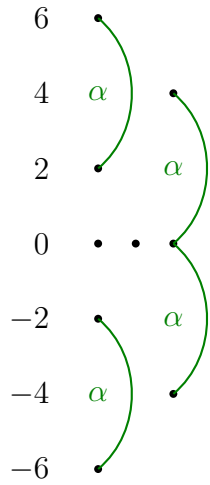
\begin{figure}[h]
	\centering
	\begin{tikzpicture}[scale=0.5]
		\begin{celldiagram}
			\n{-6}  \n{-2}   \n{2}  \n{6}
				\node [circ] at (0, 0) {};
					\node [circ] at (1, 0) {};
					\node [circ] at (2, -4) {};
					\node [circ] at (2, 0) {};
					\node [circ] at (2, 4) {};
					\draw [green!50!black, thick] (0, -6) to [bend \getside{nu}=50] (0, -2); 
					\draw [green!50!black, thick] (0, 2) to [bend \getside{nu}=50] (0, 6); 
					\draw [green!50!black, thick] (2, -4) to [bend \getside{nu}=50] (2, 0); 
					\draw [green!50!black, thick] (2, 0) to [bend \getside{nu}=50] (2, 4); 
			\foreach \y in {-6, -4, -2, 0, 2, 4, 6} {
				\node [left] at (-1, \y) {$\y$};
			}
		\end{celldiagram}
		\node [ green!50!black] at (0, -4) {$\alpha$};
		\node [ green!50!black] at (0, 4) {$\alpha$};
		\node [ green!50!black] at (2, -2) {$\alpha$};
		\node [ green!50!black] at (2, 2) {$\alpha$};
	\end{tikzpicture}
	\caption{The cell diagram of $\TMF^{C_3}_{(3)}$.}\label{fig:cell_TMFC3}
\end{figure}

\begin{figure}[h]
    \centering
    \includegraphics[width=1\linewidth]{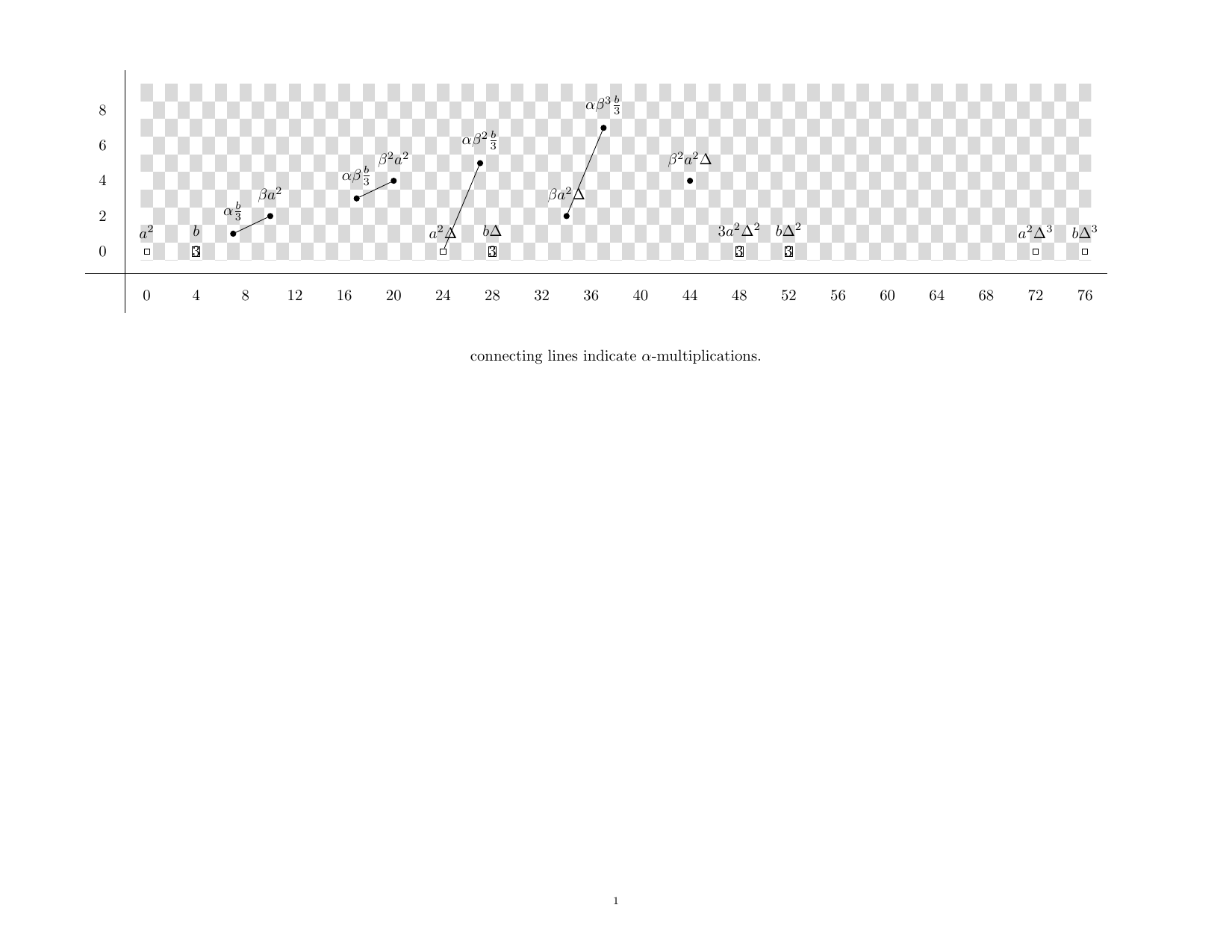}
    \caption{The $E_\infty$-page of DSS for the 3-local $\TJF_2$.\protect\footnotemark}
    \label{fig:TJF2}
\end{figure}

\footnotetext{Connecting lines indicate $\alpha$-multiplications and dotted lines indicate $\beta$-multiplications.}

\clearpage

\appendix

\section{On $\TJF_*$ after inverting $2$}\label{appendix_TJF2inverted}

This section discusses the structure of $\TJF_k$ after inverting the prime $2$, such as after $3$-localization. We state the main result: 
\begin{prop}[{Structure of $\TJF_m$ after inverting $2$}]\label{prop_structure_TJF_2inverted}
	Upon inverting $2$, the structure of the $\TMF$-module $\TJF_m$ is as follows.
	\begin{enumerate}
	\item For $m$ ranging from $1$ to $3$, the $\TMF$-modules are isomorphic as given:
	\begin{align}
		\TJF_1 &\cong \TMF , \\
		\TJF_2 &\cong \TMF/\alpha, \\
		\TJF_3 &\cong \TJF_2 \oplus \TMF[6].
	\end{align}
	\item Define $m' := \lfloor (m-1)/3 \rfloor$. For $m \geq 4$, we have the following isomorphism of $\TMF$-modules:
	\begin{align}\label{eq_deco_TJFm}
		\TJF_m\cong \TJF_{m-3m'}[6m'] \oplus \bigoplus_{i=0}^{m'-1} \TMF_1(2) [6i] .
	\end{align}
	\end{enumerate}
\end{prop}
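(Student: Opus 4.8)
The plan is to induct on $m$, handling $m\le 3$ by hand and reducing the case $m\ge 4$ to a single-step statement. For the base cases we use Fact~\ref{fact_TJF_cellstr}, which gives $\TJF_m\simeq \TMF\otimes P_m$, together with the fact that after inverting $2$ the Hopf map $\eta$ becomes null and $\nu$ generates $\pi_3 S\cong\Z/3$, mapping under the Hurewicz map to the generator $\alpha$ of $\pi_3\TMF\cong\Z/3$. Then \eqref{eq_P1} gives $\TJF_1\cong\TMF$; \eqref{eq_P2} gives $\TJF_2\cong\TMF/\alpha$; and in \eqref{eq_P3} the attaching map of the top cell of $P_3$ lies in $\pi_5$ of the $4$-skeleton $S^0\cup_\nu S^4$, which vanishes after inverting $2$, so the $S^6$ splits off and $\TJF_3\cong\TJF_2\oplus\TMF[6]$. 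This proves (1).

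For (2) I claim it suffices to prove $\TJF_m\cong\TJF_{m-3}[6]\oplus\TMF_1(2)$ for every $m\ge 4$; iterating this and noting $m-3m'\in\{1,2,3\}$ then yields \eqref{eq_deco_TJFm} with the bottom term in the range covered by (1). To prove the single-step statement I would analyze the finite complex $P_m$ directly. Its $\TMF$-cells lie in degrees $0,4,6,8,\dots,2m$, and its stable attaching maps are the classical ones for the stunted projective space $\Sigma^2\CP^{m-1}_{-1}$, recorded in \cite{Mosher}. After inverting $2$ the filtration-one components are detected by the mod-$3$ Steenrod operation $P^1$ on $H^*(\CP^\infty;\Z/3)$, which acts by $P^1(x^j)=j\,x^{j+2}$; translating through the identification of the degree-$2d$ cell with the monomial $x^{d-1}$, this says the degree-$(2d+4)$ cell is attached to the degree-$2d$ cell by $(d-1)\alpha$, hence nontrivially exactly when $d\not\equiv 1\pmod 3$. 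All remaining attaching maps involve higher $\alpha$-family elements (and $\alpha\beta$-type classes), whose images in $\pi_*\TMF$ vanish in the relevant degrees, so they contribute nothing after smashing with $\TMF$. Grouping the cells into the chains linked by these $\alpha$'s, one obtains exactly the chain $\{0,4,8\}$ together with the chains $\{6{+}6j,\,10{+}6j,\,14{+}6j\}$ for $j\ge 0$ (the topmost one possibly truncated according to $2m$); the chain $\{0,4,8\}$ produces, after smashing with $\TMF$ and inverting $2$, a copy of $\TMF_1(2)$ by the known cell structure \eqref{eq_TMF_1(2)_cell}--\eqref{eq_TMF_1(2)_8periodic}, while the remaining chains reassemble precisely into $\TJF_{m-3}[6]$, since shifting degrees by $6$ shifts $d$ by $3\equiv 0\pmod 3$ and hence preserves all the $P^1$-coefficients.

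The step I expect to be the main obstacle is exactly the control of the attaching maps of $P_m$ after smashing with $\TMF$: one must verify that \emph{only} the $P^1$-detected $\alpha$-pattern survives, i.e.\ that every potential higher-filtration contribution (for instance an $\alpha\beta$-type component linking a degree-$(2d{+}4)$ cell to a degree-$(2d{-}10)$ cell) either is absent already in $\CP^{m-1}$ or dies in $\pi_*\TMF$. Equivalently, in the inductive step one presents $\TJF_m\cong\cofib\big(\TMF[2m-1]\xrightarrow{\tr_e^{U(1)}(1)}\TJF_{m-1}\big)$ via the stabilization--restriction sequence \eqref{eq_TJF_stabres}, and the whole argument reduces to identifying $\tr_e^{U(1)}(1)\in\pi_{2m-1}\TJF_{m-1}$: feeding in the inductive decomposition of $\TJF_{m-1}$ shows this group is isomorphic to $\pi_{2j'+1}\TJF_{j'}$ for some $j'\in\{1,2,3\}$ (hence $\Z/3$ for $j'\in\{1,3\}$ and $0$ for $j'=2$, the $\TMF_1(2)$-summands contributing nothing as their homotopy is even), and everything hinges on showing $\tr_e^{U(1)}(1)$ is a \emph{generator} whenever this group is nonzero — so that coning it off either converts a $\TMF$-summand into $\TMF/\alpha$ (extending the bottom piece from $\TJF_1$ to $\TJF_2$) or extends a $\TMF/\alpha$-tower by one more $\alpha$-cell (creating a new $\TMF_1(2)$ summand), exactly as \eqref{eq_deco_TJFm} demands. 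This nonvanishing can alternatively be imported from the explicit homotopy computations of \cite{Tominaga}, where $\tr_e^{U(1)}(1)$ is pinned down inside the graded $\mathbb{E}_2$-ring $\TJF_\bullet$, generalizing the identification $\tr_e^{U(1)}(1)=\gamma=\chi(\mu)\cdot\zeta$ of Remark~\ref{fact_TJF3toTJF4} and \eqref{eq_stabres_TJF4}.
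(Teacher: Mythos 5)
Your part (1) and your reduction of (2) to the single-step splitting $\TJF_m\cong\TJF_{m-3}[6]\oplus\TMF_1(2)$ are fine and match the shape of the paper's argument (the paper proves exactly this one-step statement by induction, in the more precise form of Proposition~\ref{prop_precise_structure_TJF}). The problem is that the single step is precisely what you do not prove. In your ``Route A'' the assertion that every attaching component of $P_m$ beyond the $P^1$-detected $\alpha$-pattern ``dies in $\pi_*\TMF$'' is not correct: after inverting $2$ the relevant odd stems contain $\beta$- and $\alpha\beta$-type classes whose Hurewicz images in $\pi_*\TMF$ are \emph{nonzero} (e.g.\ $\alpha\beta\neq 0$ in $\pi_{13}\TMF_{(3)}$, which is exactly the degree of a potential attachment from a $(2d{+}14)$-cell to a $2d$-cell), so ruling them out requires showing they are absent in the stunted projective spectra themselves, which you do not do; and even granting vanishing of all cross-chain components, splitting the complex into a wedge of chains needs an actual splitting argument, not just vanishing of pairwise attaching maps. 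In your ``Route B'' everything reduces, as you say, to showing that $\tr_e^{U(1)}(1)\in\pi_{2m-1}\TJF_{m-1}$ generates the $\Z/3$ whenever that group is nonzero, for \emph{every} $m$; deferring this to \cite{Tominaga} is not an argument, since those computations are only invoked in the paper for small indices, and within your induction you cannot extract the generator statement from the decomposition of $\TJF_{m-1}$ alone.

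The paper closes exactly this gap with a different, uniform device: the class $\{c\}\in\pi_6\TJF_3$ with $e_\JF(\{c\})=c=2+O(z)$, whose restriction along $\res_{U(1)}^e$ is the unit $2$. The base case is Lemma~\ref{lem_TJF4}: after inverting $2$ the explicit attaching maps of $P_4$ give $\TJF_4\simeq\TJF_1[6]\oplus\TMF_1(2)$ via \eqref{eq_TMF_1(2)_cell}, with splitting $\psi\colon\TMF_1(2)\hookrightarrow\TJF_4$. The inductive step compares the candidate map $\bigl(\{c\}\cdot,\ (\{a\}^{k-3}\cdot)\circ\psi\bigr)$ with the stabilization--restriction cofiber sequence \eqref{eq_TJF_stabres} through a map of cofiber sequences whose rightmost vertical arrow is multiplication by $c(0)=2$ on $\TMF[2k+2]$ --- an equivalence once $2$ is inverted --- so the five lemma transports the equivalence from stage $k$ to stage $k+1$. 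This is morally the same as your ``transfer element is a generator'' claim, but it is deduced once and for all from the single elementary fact $c=2+O(z)$, rather than from case-by-case homotopy computations or unverified claims about the attaching maps of $\CP^{m-1}$. To repair your proposal you would either need to supply that uniform input (multiplication by $\{c\}$, or an equivalent argument) or genuinely verify the generator statement for all residues of $m$, which your current text does not do.
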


(1) follows directly from the cell structure $\TJF_m \simeq \TMF \otimes P^m$. 
Before we proceed, we remark on the following fact about $\TMF_1(2)$:

\begin{rem}[Cell structures of $\TMF_1(2)$]\label{rem_TMF_1(2)}
	When $2$ is invertible, there are $\TMF$-module isomorphisms given by:
		\begin{align}\label{eq_TMF_1(2)_cell}
		\TMF_1(2) \simeq \TMF \otimes \left( S^0 \cup_\alpha S^4 \cup_\alpha S^8 \right)  \simeq \TMF \otimes \left( S^0 \cup_\alpha S^4 \cup_{2\alpha}  S^8\right) \simeq \TMF \otimes \left( S^0 \cup_{2\alpha} S^4 \cup_{\alpha}  S^8\right),
	\end{align}
	where the initial equivalence is a well-established result (e.g., see \cite[Theorem 13.4]{BrunerRognes}), and the subsequent equivalences arise from the automorphism $-1$ on the top and bottom cells. Furthermore, we find $\pi_* \TMF_1(2) \simeq \Z[\frac12][a_2, a_4, \Delta^{-1}]$, where $\Delta = a_4^2(a_2^2 - a_4)$. This indicates that $a_4$ has an inverse \(\frac{a_4(a_2^2-a_4)}{\Delta}\) within $\pi_{-8}\TMF_1(2)$, making $\TMF_1(2)$ an $8$-periodic $\TMF$-module:
	\begin{align}\label{eq_TMF_1(2)_8periodic}
		\TMF_1(2) \simeq \TMF_1(2)[8].
	\end{align}
	It is important to note that this isomorphism is not canonical; indeed, one could alternatively select $a_2^2-a_4$ as the periodicity element.
\end{rem}

The remainder of this section is devoted to the proof of Proposition \ref{prop_structure_TJF_2inverted} (2). 

\begin{lem}[{Decomposition of $\TJF_4$}]\label{lem_TJF4} 
    Suppose once more that $2$ is invertible.
	\begin{enumerate}
		\item There exists a unique element $\{c\} \in \pi_6 \TJF_3$ such that its Jacobi form image under $e_\JF$ equals $c := \phi_{0, 3/2} \in \pi_6 \JF_3$. 
		\item Multiplication by $\{c\} \in \pi_6 \TJF_4$ yields a split fiber sequence:
		\begin{align}\label{eq_fibseq_TJF4}
		 \TJF_1[6] \xrightarrow{\{c\} \cdot } \TJF_4 \to \TMF_1(2).
		\end{align}
	\end{enumerate}
\end{lem}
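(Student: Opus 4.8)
The plan is to prove part (1) by recalling Proposition~\ref{prop_c}(2), which remains valid---indeed becomes cleaner---after inverting $2$. By Fact~\ref{fact_TJF_cellstr} and \eqref{eq_P3} we have $\TJF_3 \simeq \TMF \otimes (S^0 \cup_\nu S^4 \cup_\eta S^6)$, and since $\eta$ is $2$-torsion the $6$-cell detaches after inverting $2$, so $\TJF_3 \simeq \TMF/\alpha \oplus \TMF[6]$. Because $\pi_6 \TMF$ and $\pi_2 \TMF$ are $2$-torsion, the cofiber sequence $\TMF[3] \xrightarrow{\alpha} \TMF \to \TMF/\alpha$ gives $\pi_6(\TMF/\alpha) = 0$, hence $\pi_6 \TJF_3 \cong \pi_0 \TMF = \Z[\frac12]$; and $e_\JF \colon \pi_6 \TJF_3 \to \JF_3|_{\deg = 6}$ is injective (the input from \cite{Tominaga} used in the proof of Proposition~\ref{prop_c}), so the preimage $\{c\}$ of $c$ exists and is unique. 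Writing $u$ for the unit map, let $\hat c \in \pi_6 P_3$ be the class with $u(\hat c) = \{c\}$; by Proposition~\ref{prop_c}(2) it maps to $2$ under $\pi_6 P_3 \to \pi_6(P_3/P_2) = \Z$, hence to a unit after inverting $2$.

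For part (2) the strategy is to make both sides of \eqref{eq_fibseq_TJF4} explicit after inverting $2$ and then identify the map with a summand inclusion. Using \eqref{eq_P4} and $\eta = 0$, the $6$-cell of $P_4$ detaches, giving $P_4 \simeq S^6 \vee (S^0 \cup_\alpha S^4 \cup_{2\alpha} S^8)$ (the $8$-cell remaining attached to the $4$-cell via $2\alpha$). By Remark~\ref{rem_TMF_1(2)} the second wedge summand satisfies $\TMF \otimes (S^0 \cup_\alpha S^4 \cup_{2\alpha} S^8) \simeq \TMF_1(2)$, so
\[
\TJF_4 \simeq \TMF \otimes P_4 \simeq \TMF[6] \oplus \TMF_1(2).
\]
On the other side $\TJF_1 \simeq \TMF$ by \eqref{eq_TJF_1}, so $\TJF_1[6] \simeq \TMF[6]$, and $\{c\} \cdot \colon \TJF_1[6] \to \TJF_4$ is the $\TMF$-linear map $\TMF[6] \to \TJF_4$ classified by $\chi(\mu) \cdot \{c\} = \chi(\mu^2) \in \pi_6 \TJF_4$ (Proposition~\ref{prop_Chua}(1)).

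The crux is to show this class lands as a unit on the $\TMF[6]$-summand, equivalently that the composite $\TMF[6] \xrightarrow{\{c\} \cdot} \TJF_4 \xrightarrow{\mathrm{pr}} \TMF[6]$ is an equivalence. Here I would invoke the commuting square \eqref{eq_attaching_P}, which identifies $\chi(\mu) \cdot \colon \TJF_3 \to \TJF_4$ with $\TMF \otimes (P_3 \hookrightarrow P_4)$, together with $\{c\} = u(\hat c)$. After inverting $2$ both $6$-cells are free wedge summands and $P_3 \hookrightarrow P_4$ restricts to an equivalence on them, so $\hat c$, being a unit on the $S^6$-summand of $P_3$ by part (1), is carried to a unit on the $S^6$-summand of $P_4$; applying $\TMF \otimes -$ shows $\chi(\mu^2)$ is a unit times the generator of $\pi_6(\TMF \otimes S^6) = \pi_0 \TMF$, with vanishing $\TMF_1(2)$-component because $\pi_6 \TMF_1(2) = 0$ (the homotopy of $\TMF_1(2)$ is concentrated in degrees divisible by $4$). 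Hence $\{c\} \cdot$ is a split monomorphism whose cofiber is a complement of $\TMF[6]$ in $\TMF[6] \oplus \TMF_1(2)$, namely $\TMF_1(2)$, and \eqref{eq_fibseq_TJF4} splits.

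The main obstacle is precisely this last identification: the remaining steps are routine bookkeeping with the cell structures \eqref{eq_P3}, \eqref{eq_P4} and with Remark~\ref{rem_TMF_1(2)}, but to rule out that $\{c\} \cdot$ interacts nontrivially with the $\TMF_1(2)$-summand one genuinely needs the integral input of Proposition~\ref{prop_c}(2), that $\hat c \mapsto 2$ (a unit once $2$ is inverted). A variant that avoids the ``complement'' bookkeeping is to construct the splitting map directly, $\TMF_1(2) \simeq \TMF \otimes (S^0 \cup_\alpha S^4 \cup_{2\alpha} S^8) \hookrightarrow \TMF \otimes P_4 \simeq \TJF_4$, from the inclusion of the second wedge summand of $P_4$, and verify on homotopy groups that $\TJF_1[6] \oplus \TMF_1(2) \to \TJF_4$ is an equivalence.
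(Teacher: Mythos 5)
Your argument is correct and takes essentially the same route as the paper: the paper's proof likewise uses $\TJF_4 \simeq \TMF \otimes P_4$, splits $P_4$ after inverting $2$ as $S^6 \vee \left(S^0 \cup_\alpha S^4 \cup_{2\alpha} S^8\right)$, and compares the second summand with the cell structure of $\TMF_1(2)$ from \eqref{eq_TMF_1(2)_cell}. The only difference is that you make explicit, via Proposition~\ref{prop_c}(2) (the class $\hat c \mapsto 2$, a unit once $2$ is inverted) and $\pi_6 \TMF_1(2) = 0$, why $\{c\}\cdot$ is the inclusion of the $\TMF[6]$-summand, a verification the paper leaves implicit.
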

\begin{proof}
	Use $\TJF_4 \simeq \TMF \otimes P^4$ and the fact that $P^4$ decomposes as $P^4  \simeq S^0 \cup_{\alpha}S^4 \cup_{2\alpha}S^8 \oplus S^6$ after inverting $2$, and compare it with $\TMF_1(2) \simeq \TMF \otimes S^0 \cup_{\alpha} S^4 \cup_{2\alpha} S^8$ in \eqref{eq_TMF_1(2)_cell}. 
\end{proof}

\begin{defn}\label{def_zeta}
	Inverting $2$, we choose a splitting of the left arrow in \eqref{eq_fibseq_TJF4} and denote it by
	\begin{align}
		\psi \colon \TMF_1(2) \hookrightarrow \TJF_4. 
	\end{align}
\end{defn}

\begin{rem}
	We can take $\psi$ to be the restriction map $\res_{Sp(1)}^{U(1)} \colon \mathrm{TEJF}_{4} \to \TJF_4$ \cite[Appendix B]{lin2024topologicalellipticgenerai}. 
\end{rem}

We can now state a more precise version of Proposition \ref{prop_structure_TJF_2inverted} (2): 

\begin{prop}\label{prop_precise_structure_TJF}
	When $2$ is inverted, for any integer $m \ge 4$, $\TJF_m$ admits the following decomposition. Let $m' = \lfloor (m-1)/3 \rfloor$. Consider the mapping
	\begin{align}
		\left(\{c\}^{m'}\cdot, \ \bigoplus_{i=0}^{m'-1} \left(\{a\}^{m-4-3i}\{c\}^i \cdot \right) \circ \psi \  \right)  \colon \TJF_{m-3m'}[6m'] \oplus \bigoplus_{i=0}^{m'-1} \TMF_1(2)[6i] \to \TJF_m. 
	\end{align}
	This map is an equivalence of $\TMF$-modules. The components of this map include the inclusion $\psi$ as defined in Definition \ref{def_zeta}, together with the multiplications by $\{c\}\in \pi_6 \TJF_3$ and the element $\{a\} \in \pi_0 \TJF_1$, whose Jacobi form image is given by $\phi_{-1, \frac12}$.
\end{prop}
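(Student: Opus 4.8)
The plan is to reduce everything to one ``Key Lemma'' about the cofiber of multiplication by $\{c\}$, and then run an induction. For $m\ge 4$ set $Q_m:=\cofib\bigl(\{c\}\cdot\colon \TJF_{m-3}[6]\to\TJF_m\bigr)$, with projection $\pi_m\colon\TJF_m\to Q_m$. The Key Lemma asserts that $Q_m\simeq\TMF_1(2)$ for all $m\ge 4$, and that the map $Q_4\to Q_m$ induced on $\{c\}$-cofibers by $\{a\}^{m-4}\cdot\colon\TJF_4\to\TJF_m$ is an equivalence. The case $m=4$ is exactly Lemma~\ref{lem_TJF4}(2) together with Definition~\ref{def_zeta} (so $\pi_4\circ\psi=\id$). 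For $m\ge 5$ I would compare $Q_{m-1}$ and $Q_m$ via the square
\[
\xymatrix{
\TJF_{m-4}[6] \ar[r]^-{\{a\}\cdot} \ar[d]_-{\{c\}\cdot} & \TJF_{m-3}[6] \ar[d]^-{\{c\}\cdot} \\
\TJF_{m-1} \ar[r]^-{\{a\}\cdot} & \TJF_m
}
\]
which commutes because $\{a\}\in\pi_0\TJF_1$ is central. By the stabilization--restriction fiber sequence \eqref{eq_TJF_stabres}, the cofibers of the two horizontal arrows are both $\TMF[2m]$, and the induced self-map of $\TMF[2m]$ is multiplication by $\res_{U(1)}^e(\{c\})=2\in\pi_0\TMF$ (using that $\res_{U(1)}^e$ is multiplicative, being restriction of sheaf sections along the zero section, together with \eqref{diag_P3TJF3} and $c=2+O(z)[[q]]$); after inverting $2$ this is an equivalence. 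Hence the total cofiber of the square vanishes, so the induced map on the \emph{vertical} cofibers $Q_{m-1}\to Q_m$ is an equivalence. Iterating from $Q_4$ and using functoriality of $\cofib$ yields both $Q_m\simeq\TMF_1(2)$ and the identification of the composite equivalence $Q_4\simeq Q_m$ with the map induced by $\{a\}^{m-4}\cdot$.

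\textbf{The induction.} Granting the Key Lemma, the proposition follows by induction on $m$ in steps of $3$, with base cases $m\in\{1,2,3\}$ where the map is the identity (and $\TJF_1\simeq\TMF$, $\TJF_2\simeq\TMF/\alpha$, $\TJF_3\simeq\TJF_2\oplus\TMF[6]$ by Proposition~\ref{prop_structure_TJF_2inverted}(1)). Write $\Phi_m$ for the map in the statement. Using the centrality of $\{a\}$ to commute it past $\{c\}$, one checks, with $(m-3)'=m'-1$, that $\Phi_m$ restricted to the summand $\TJF_{m-3m'}[6m']\oplus\bigoplus_{i=1}^{m'-1}\TMF_1(2)[6i]$ factors as $\{c\}\cdot\circ\Phi_{m-3}[6]$ (reindexing $i\mapsto i-1$, and noting $m-4-3i=(m-3)-4-3(i-1)$), while on the remaining summand $\TMF_1(2)[0]$ it is $\{a\}^{m-4}\cdot\circ\psi$. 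Composing the latter with $\pi_m$ gives precisely the Key Lemma equivalence $Q_4\simeq Q_m$ (since $\pi_4\circ\psi=\id$), and composing the former with $\pi_m$ gives $0$ (it factors through $\{c\}\cdot$). Thus $\Phi_m$ is the middle map of a morphism of cofiber sequences whose two outer maps are $\Phi_{m-3}[6]$ (an equivalence by the inductive hypothesis) and the Key Lemma equivalence; the five lemma in the stable category $\Mod_\TMF$ then shows $\Phi_m$ is an equivalence.

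\textbf{Main obstacle.} The substance is the homological bookkeeping: setting up the octahedra / $3\times 3$-lemmas and pinning down the induced maps on the various cofibers. The two delicate points are (i) verifying that $\res_{U(1)}^e$ intertwines the graded multiplication on $\{\TJF_k\}_k$ with that on $\{\TMF[2k]\}_k$ in the precise form $\res_{U(1)}^e(\{c\}\cdot x)=\res_{U(1)}^e(\{c\})\cdot\res_{U(1)}^e(x)$, so that the induced self-map of the single-cell cofiber $\TMF[2m]$ is literally multiplication by $2$ (here one may use that $\pi_0\TMF\hookrightarrow \MF|_{\deg 0}$ is injective and that $\res_{U(1)}^e\circ(\{c\}\cdot)$ is rationally surjective); and (ii) checking that the equivalence $Q_4\simeq Q_m$ obtained by iterating the square, together with $\Phi_{m-3}$, genuinely assembles into a morphism of cofiber sequences with middle term $\Phi_m$ --- which comes down to unwinding the commutation relations among $\{a\}\cdot$, $\{c\}\cdot$, $\psi$ and the cell inclusions $P_k\hookrightarrow P_{k+1}$ underlying $\{a\}\cdot$ via \eqref{eq_attaching_P}. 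Neither is deep, but both must be handled carefully; once they are in place, the five lemma closes the induction.
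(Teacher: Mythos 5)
Your proposal is correct and is essentially the paper's own argument: in both, the entire content is the commuting square of $\{a\}$-multiplication against $\{c\}$-multiplication, whose comparison map on the stabilization--restriction cofibers $\TMF[2m]$ is multiplication by $\res_{U(1)}^e(\{c\})=2$, an equivalence once $2$ is inverted, with Lemma \ref{lem_TJF4} as the base case. The only difference is bookkeeping: the paper runs a single one-step induction carrying the full splitting map $\left(\{c\}\cdot,\ (\{a\}^{k-4}\cdot)\circ\psi\right)\colon \TJF_{k-3}[6]\oplus\TMF_1(2)\to\TJF_k$ as the left vertical of a morphism of cofiber sequences and then iterates in steps of three, whereas you first isolate the cofiber identification $Q_m\simeq\TMF_1(2)$ and reassemble the splitting by a five-lemma induction --- an organizational, not mathematical, distinction.
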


\begin{proof}
	We show that the map
	\begin{align}\label{eq_dec_induct}
		\left(\{c\} \cdot, \ \left(\{a\}^{k-4} \cdot \right) \circ \psi \right) \colon \TJF_{k-3}[6] \oplus \TMF_1(2) \to \TJF_k
	\end{align}
	is an equivalence for all $k \ge 4$. Then, the proposition follows by repeatedly applying this claim to $k=m, m-3, \cdots, m-3(m'-1)$. 
	
	We prove the above claim by induction on $k$. The case $k =4$ is addressed by Lemma \ref{lem_TJF4}. Consider the following commutative diagram of cofiber sequences:
	\begin{align}
		\begin{xymatrix}{
				\TJF_{k-3}[6] \oplus \TMF_1(2) \ar[rr]^-{\left(\{a\} \cdot, \ \id\right) } \ar[d]_{\left(\{c\} \cdot, \ \left(\{a\}^{k-4} \cdot \right) \circ \psi \right) } && \TJF_{k-2}[6] \oplus \TMF_1(2)\ar[rr]^-{\left(\res_{U(1)}^e, \ 0 \right)} \ar[d]_{\left(\{c\} \cdot, \ \left(\{a\}^{k-3} \cdot\right) \circ \psi \right) } & &\TMF[2k+2] \ar[d]_{\simeq}^-{\res_{U(1)}^e(\{c\})=2 \cdot} \\
				\TJF_{k} \ar[rr]^-{\{a\}\cdot} && \TJF_{k+1} \ar[rr]^-{\res_{U(1)}^e} && \TMF[2k+2] \\
			}
		\end{xymatrix}
	\end{align}
	The rightmost vertical arrow is identified as multiplication by $2$ since $c(z)=\phi_{0, 3/2}(z) = 2 + O(z)$, and is therefore an equivalence. 
	By the commutativity of the diagram, we see that the equivalence of the left vertical arrow implies the equivalence of the middle arrow. This completes the proof of the claim that \eqref{eq_dec_induct} is an equivalence and concludes the proof of Proposition \ref{prop_precise_structure_TJF} and of Proposition \ref{prop_structure_TJF_2inverted}. 
\end{proof}

\section{$2$-local Descent Spectral Sequence Charts}\label{appendix_TJF2}
Here, we show diagrams of the descent spectral sequences (DSS) for $\TEJF_4$ and $\TJF_4$, which are part of $\RO(C_2)$-graded $\TMF$. These charts are drawn in Adams: elements in $E_2^{s,t} \simeq \Ext^{s,t}$ are plotted in coordinates $(t-s, s)$, and differentials have degree $d_r \colon E_r^{s,t} \to E_r^{s+r, t+r-1}$. We adopt the following conventions:
\begin{itemize}
    \item A dot ``$\bullet$'' represents a generator of the cyclic group $\bZ / 2$.
    \item A circle around an element denotes a nontrivial $\bZ / 2$-extension of the group represented by that element.
    \item A square ``$\square$'' denotes a factor of $\bZ_{(2)}$.
    \item A number $n$ in the square indicates $n \bZ_{(2)}$-summand.
    \item A diamond ``$\diamond$'' in the $E_2$-page shows the repeated $\eta$-multiplication, meaning that all $\eta$-multiples from that element survive.
    \item A vertical line denotes multiplication by $2$.
    \item A non-vertical line of positive slope denotes multiplication with $\eta$ or $\nu$. Note that not all exotic $\eta$ and $\nu$ extensions are shown in the $E_\infty$-page, so there might be non-trivial extensions in the homotopy groups.
    \item A non-vertical arrow of negative slope denotes a differential.
    \item The $E_2$-term of DSS was computed using the cellular filtration, and we encode the origin of each class by color: the color of an element in the $E_2$-page indicates the $\TMF$-cell of $\TJF$ or $\TEJF$ from which it originates at the $E_1$-level. The $E_1$-page is omitted from the figures, as the relevant information is retained in the coloring. Specifically, classes from 0-cells are colored black, those from 4-cells are brown, 6-cells red, and 8-cells orange.
    \item The color of the multiplication lines indicates the image of the generator: for example, in Figure \ref{fig:TEJF4diff1}, $\nu$ times the generator in bidegree $(11,1)$ is the orange class in bidegree $(14,2)$, and therefore the $\nu$-multiplication induces an isomorphism between $E_{2}^{1, 12} \simeq \bZ / 4 \to E_2^{2,16} \simeq \bZ / 4$.
\end{itemize}

Figures \ref{fig:TEJF4diff1} to \ref{fig:TEJF4diff4} show $2$-local DSS for $\TEJF_4$. Its $E_2$-term is computed in \cite{BauerComputationTJF}, and differentials can be deduced similarly to the DSS for $\TJF$. Figures \ref{fig:piTJF4first} and \ref{fig:piTJF4second} show the $E_\infty$-page of DSS for $\TJF_4$. Its $E_2$-term and differentials are computed in \cite{Tominaga}. The relations of elements in DSS and Jacobi forms are summarized as follows.

\begin{enumerate}
    \item Elements with positive $y$-coordinates have trivial image in $e_\JF \colon \TEJF_4 \to \TJF_4 \to \JF_4$.
    \item The generators in bidegree $(0,0)$ and $(4,0)$ in the $E_2$-term (Figure \ref{fig:E2TEJF4}) correspond to $a^4 \in \JF_2$ and $a^2b \in \JF_2$, respectively. It turns out that $a^2b$ supports $d_3$-differential, and the class represented by $2b$ survives in the $E_\infty$-page. Therefore the generator of $\pi_4 \TEJF_4$ maps to $2a^2b$ via $e_\JF \colon \pi_* \TEJF_4 \to \JF_4$.
    \item The generator of $E_2^{8,0}$ in Figure \ref{fig:E2TEJF4} represents $d \in \JF_4$. 
    \item In the $E_2$-page (Figure \ref{fig:E2TEJF4}), the classes that are divisible by $c_4$ or $c_6$ are drawn separately above the main part of the chart. These elements exhibit a $\ko$-like pattern and are periodic under multiplication by $c_4$, $c_6$, and $\Delta$. For simplicity, these classes are omitted from the $E_4$-page diagram.
\end{enumerate}

\begin{figure}
    \centering
    \includegraphics[page = 1, width=1\linewidth]{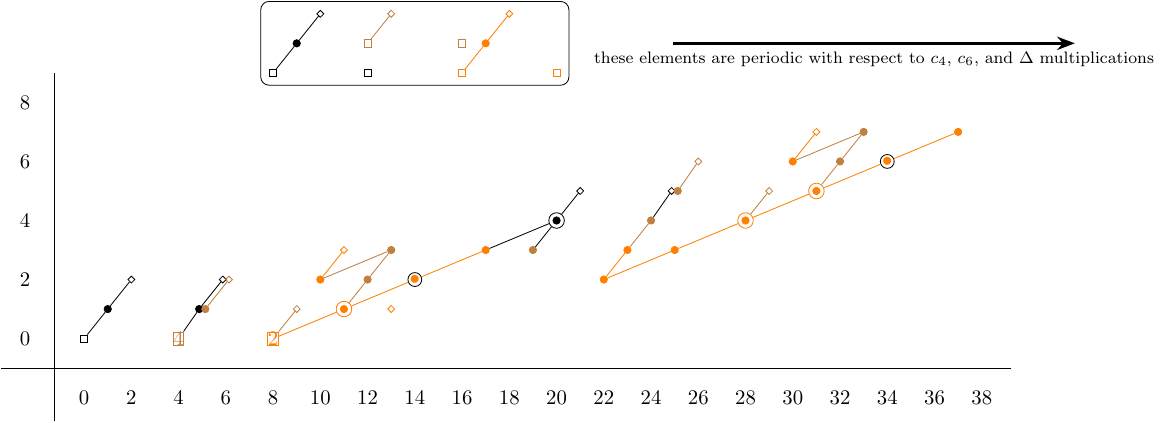}
    \caption{The $E_2$-page of DSS for $\TEJF_4$.}
    \label{fig:E2TEJF4}
\end{figure}

\begin{figure}
    \centering
    \includegraphics[page = 2, width=1\linewidth]{TEJF4.pdf}
    \caption{The $E_4$-page of DSS for $\TEJF_4$ and differentials $d_r$, $d \geq 5$.}
    \label{fig:TEJF4diff1}
\end{figure}

\begin{figure}
    \centering
    \includegraphics[page = 3, width=1\linewidth]{TEJF4.pdf}
    \caption{The $E_4$-page of DSS for $\TEJF_4$ and differentials $d_r$, $d \geq 5$.}
    \label{fig:TEJF4diff2}
\end{figure}

\begin{figure}
    \centering
    \includegraphics[page = 4, width=1\linewidth]{TEJF4.pdf}
    \caption{The $E_4$-page of DSS for $\TEJF_4$ and differentials $d_r$, $d \geq 5$.}
    \label{fig:TEJF4diff3}
\end{figure}

\begin{figure}
    \centering
    \includegraphics[page = 5, width=1\linewidth]{TEJF4.pdf}
    \caption{The $E_4$-page of DSS for $\TEJF_4$ and differentials $d_r$, $d \geq 5$.}
    \label{fig:TEJF4diff4}
\end{figure}

\begin{figure}
    \centering
    \includegraphics[page = 6, width=1\linewidth]{TEJF4.pdf}
    \caption{The $E_\infty$-page of DSS for $\TEJF_4$.}
    \label{fig:piTEJF4first}
\end{figure}

\begin{figure}
    \centering
    \includegraphics[page = 7, width=1\linewidth]{TEJF4.pdf}
    \caption{The $E_\infty$-page of DSS for $\TEJF_4$.}
    \label{fig:piTEJF4second}
\end{figure}

\begin{figure}
    \centering
    \includegraphics[page = 7, width=1\linewidth]{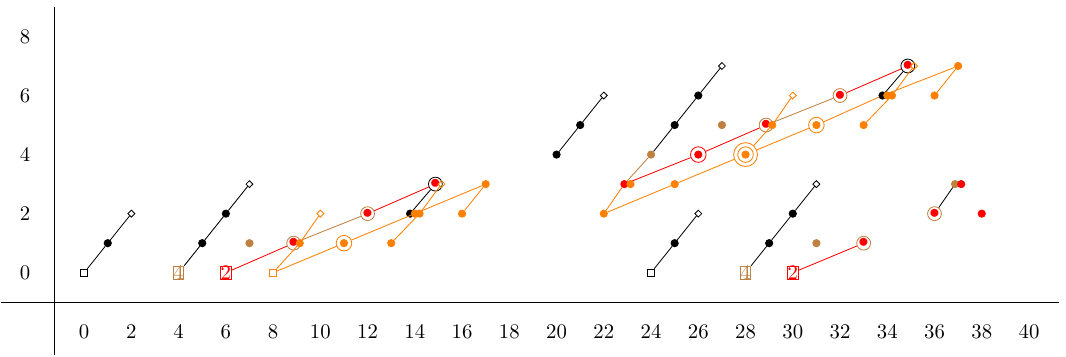}
    \caption{The $E_\infty$-page of DSS for $\TJF_4$.}
    \label{fig:piTJF4first}
\end{figure}

\begin{figure}
    \centering
    \includegraphics[page = 8, width=1\linewidth]{TJF4sseq.pdf}
    \caption{The $E_\infty$-page of DSS for $\TJF_4$.}
    \label{fig:piTJF4second}
\end{figure}

\clearpage

\def\arxivconsists of font{\rm}
\bibliographystyle{ytamsalpha}
\bibliography{ref}

\end{document}